\DeclareMathAlphabet{\mathbbold}{U}{bbold}{m}{n}
\newtheorem{proposition}{Proposition}[section]
\newtheorem{theorem}[proposition]{Theorem}
\newtheorem{lemma}[proposition]{Lemma}
\newtheorem{corollary}[proposition]{Corollary}
\newtheorem{definition}[proposition]{Definition}
\newtheorem{remark}[proposition]{Remark}
\newtheorem{question}[proposition]{Question}
\newcommand{\cst}{\ensuremath{\mathrm{C}^*}}
\newcommand{\id}{\mathrm{id}}
\newcommand{\ph}{\varphi}
\newcommand{\I}{\mathds{1}}
\newcommand{\comp}{\circ}
\newcommand{\tens}{\otimes}
\newcommand{\is}[2]{{\left\langle{#1}\,\vline\,#2\right\rangle}}
\newcommand{\ket}[1]{{\left|#1\right\rangle}}
\newcommand{\flip}{\boldsymbol{\chi}}
\newcommand{\CC}{\mathbb{C}}
\newcommand{\GG}{\mathbb{G}}
\newcommand{\HH}{\mathbb{H}}
\newcommand{\GP}{\GG_{\scriptscriptstyle{\mathfrak{P}}}}
\newcommand{\rP}{\rho_{\scriptscriptstyle{\mathfrak{P}}}}
\newcommand{\sA}{\mathsf{A}}
\newcommand{\sJ}{\mathsf{J}}
\DeclareMathOperator{\Aut}{Aut}
\DeclareMathOperator{\QAut}{QAut}
\DeclareMathOperator{\B}{B}
\DeclareMathOperator{\C}{C}
\DeclareMathOperator{\M}{M}
\DeclareMathOperator{\K}{K}
\title{Quantum Mycielskians: symmetries, twin vertices and distinguishing labelings}
\author{Arkadiusz Bochniak}
\address{Max-Planck-Institut f{\"u}r Quantenoptik, Garching, Germany}
\address{Munich Center for Quantum Science and Technology, Munich, Germany}
\email{arkadiusz.bochniak@mpq.mpg.de}
\author{Igor Che{\l}stowski}
\address{University of Warsaw, Faculty of Physics, Department of Mathematical Methods in Physics, Poland}
\email{ic393643@okwf.fuw.edu.pl}
\author{Pawe{\l} Kasprzak}
\address{University of Warsaw, Faculty of Physics, Department of Mathematical Methods in Physics, Poland}
\email{pawel.kasprzak@fuw.edu.pl}
\author{Piotr M.~So{\l}tan}
\address{University of Warsaw, Faculty of Physics, Department of Mathematical Methods in Physics, Poland}
\email{piotr.soltan@fuw.edu.pl}
\keywords{Quantum graph, Mycielskian, quantum group, twin vertices, distinguishing number}
\begin{document}
    \maketitle

\begin{abstract}
We investigate a quantum generalization of the Mycielski construction for quantum graphs. In particular, we analyze the quantum symmetries of quantum Mycielskians and their relation to the symmetries of the underlying quantum graphs. We introduce a quantum analogue of graphs with twin vertices and show that, for classical graphs with a small number of vertices, this notion reduces to the classical one. Finally, we propose a quantum extension of the distinguishing number and examine its behavior under the quantum Mycielski construction.
\end{abstract}

\section{Introduction}\label{sec:intr}

Classical graph theory has a wide range of intriguing applications beyond pure mathematics, with particularly strong connections to computer science and information theory. In communication theory, graphs are used to model channels, where vertices represent possible inputs and outputs, and edges encode confusability relation between signals. This perspective naturally extends to quantum information theory, where the role of confusability graphs is played by non-commutative operator systems associated with quantum channels (see, e.g.,~\cite{Brannan_2019, BEV, DuanSeveriniWinter2013, Ganesan, paupaandme}). In this setting, Kraus operators of a quantum channel generate an algebraic structure that serves as the quantum analogue of the adjacency relations of a graph. The passage from classical to quantum thus requires reformulating standard graph invariants in operator-algebraic terms. Among the classical quantities that admit such generalizations are chromatic numbers - capturing the minimal resources needed for proper labelings - and clique numbers, which quantify the largest completely connected substructures. The (classical) chromatic number of a graph is the minimum number of labels (colors) that can be assigned to its vertices, so that the endpoints of every edge receive distinct labels. The clique number, by contrast, is the maximum size of a complete graph that can be embedded as a subgraph of the original. For example, graphs with clique number $2$ are precisely the triangle-free graphs. These invariants, when reinterpreted in the non-commutative framework, provide valuable insights into zero-error communication capacities of quantum channels and the interplay between combinatorial and operator-algebraic methods.

A long-standing problem in classical graph theory was to determine whether triangle-free graphs with arbitrarily large chromatic number exist. Equivalently, one asked whether the absence of small cliques imposes an upper bound on the chromatic complexity of a graph. This problem was resolved in~\cite{Mycielski}, where Mycielski introduced an explicit iterative construction. Given a triangle-free graph $G$, he defined a new graph $\mu(G)$, now known as the {\it Mycielskian} of $G$, which is also triangle-free, contains $G$ as an induced subgraph, and whose chromatic number satisfies 
\[
\chi(\mu(G)) = \chi(G) + 1.
\]  
By iterating this construction, one obtains an infinite family of triangle-free graphs with unbounded chromatic number, thereby providing a negative answer to the aforementioned problem. The Mycielski construction has since become a paradigmatic example of how graph transformations can alter chromatic properties without affecting clique structure.  

The natural question of extending this phenomenon to the quantum setting was addressed in~\cite{BochKaspMyc}, where two of the present authors introduced a non-commutative analogue of Mycielski's construction. In this framework, classical graphs are replaced by {\it quantum graphs}, modeled by operator systems generated by Kraus operators of quantum channels. The proposed construction, termed the {\it quantum Mycielskian}, takes a finite quantum graph $\mathcal{G}$ as input and produces a new one that contains the original as a subgraph in the operator-algebraic sense. This quantum analogue preserves certain structural features of the classical case but exhibits important differences in the behavior of invariants. Specifically, it was shown that while the clique number remains unchanged under the transformation -- mirroring the classical situation -- the (quantum) chromatic number $\chi_\bullet$ does not necessarily increase by exactly one. Instead, it satisfies only a weaker property:
\[
\chi_{\bullet}(\mu(\mathcal{G})) \leq \chi_\bullet(\mathcal{G}) + 1,
\]
with strict equality not guaranteed in general.

In this paper, we investigate further properties of the Mycielski construction in the quantum framework, with an emphasis on how structural and symmetry-related features behave under this transformation. Section~\ref{subsec:class} begins with a review of the classical Mycielski construction, highlighting its role in generating triangle-free graphs of arbitrarily large chromatic number. Section~\ref{subsec:qgqg} then sets up the operator-algebraic formalism of quantum graphs and introduces the actions of quantum groups on them. These actions provide the natural symmetry framework in the non-commutative setting, and their behavior under graph transformations will play a central role in our analysis.

A key technical aspect of this study concerns actions that preserve partitions of unity, which, in the commutative case, correspond precisely to proper vertex colorings or labelings of graphs. We discuss this notion in Section~\ref{subsec:action}, and its relevance for quantum analogues of chromatic invariants will be revisited throughout the paper (see, in particular, Section~\ref{sec:dist}). At the end of Section~\ref{sec:intr}, we recall the definition of the quantum Mycielskian and prepare the ground for our new results.

Section~\ref{sec:iso} is devoted to isomorphisms of quantum Mycielskians. In Section~\ref{sec:symm}, we investigate how the Mycielski construction interacts with the quantum automorphism groups of graphs, showing that this process naturally gives rise to the notion of \emph{quantum twin vertices}. These are studied in detail in Sections~\ref{subsec:twin} and~\ref{subsec:small}, as well as in the Appendix.

Finally, in Section~\ref{sec:dist}, we introduce the \emph{quantum distinguishing number}, a non-commutative analogue of the classical distinguishing number that measures the minimal labeling required to break all symmetries of a graph. We analyze how this invariant behaves under the quantum Mycielski construction and compare the results with the classical situation, further illustrating both the parallels and the divergences between the two frameworks.

\subsection{Classical Mycielski construction}\label{subsec:class}\hspace*{\fill}

Let $G=(V_G, E_G)$ be a (finite, undirected) graph. The \emph{Mycielskian} of $G$, denoted by $\mu(G)$ is a graph with the set of vertices $V_{\mu(G)}=\{\bullet\}\sqcup V_G\sqcup V_G$, where $\bullet$ denotes a distinguished point, called \emph{master vertex}. The edge set $E_{\mu(G)}$ can be specified by the adjacency matrix $A_{\scriptscriptstyle\mu(G)}$. We recall that the adjacency matrix $A_{\scriptscriptstyle G}$ for an undirected graph $G$ is a $\{0,1\}$-matrix whose columns and rows are indexed by the set of vertices, and $(A_{\scriptscriptstyle G})_{i,j}$ is non-zero if and only if $\{v_i,v_j\}\in E$. For the Mycielskian, we have
\[
A_{\scriptscriptstyle\mu(G)}=\begin{bmatrix}
    0 &\vec{0}^T &\vec{1}^T\\
   \vec{0}& A_{\scriptscriptstyle G} & A_{\scriptscriptstyle G}\\
   \vec{1}& A_{\scriptscriptstyle G} & 0
\end{bmatrix},
\]
where $\vec{1}$ (resp. $\vec{0}$) is the column vector with all entries being $1$ (resp. $0$). Mycielski's theorem~\cite{Mycielski} says that denoting by $\omega(\cdot)$ the clique number of a graph we have $\omega(\mu(G))=\omega(G)$, i.e.~the clique number remains unchanged under the passage from $G$ to $\mu(G)$, and for the chromatic numbers $\chi$ we have $\chi(\mu(G))=\chi(G)+1$.

The Mycielski construction can be generalized to so-called \emph{$r$-Mycielskian} ($r\geq 2$) also known as \emph{higher cones} over graphs (\cite{Stiebitz, VanNgoc}). In this case, 
\[
V_{\mu_{r-1}(G)}=\{\bullet\}\sqcup V^1\sqcup\ldots\sqcup V^r
\]
with $V^j=\{v_1^j,\ldots,v^j_{|V_G|}\}$, $j=1,\ldots,r$ denoting distinct copies of $V_G$, and
\[
E = E_G\cup\bigcup_{i=0}^{r-1}\bigl\{\{v^i_j,v^{i+1}_{j'}\}\,\bigr|\bigl.\,\{v_j,v_{j'}\}\in E_G\bigr\}\cup\bigl\{\{v^r_1,\bullet\},\{v^r_2,\bullet\},\ldots,\{v^r_{|V_G|},\bullet\}\bigr\}.
\]

\subsection{Quantum graphs and their quantum symmetries}\label{subsec:qgqg}\hspace*{\fill}

\begin{definition}
Let $\mathsf{A}$ be a finite-dimensional \cst-algebra. A \emph{$\delta$-form} on $\mathsf{A}$ is a faithful state $\psi$ on $\mathsf{A}$ such that writing $L^2(\mathsf{A},\psi)$ for the GNS Hilbert space for $\psi$ and regarding the multiplication map $m\colon\mathsf{A}\tens\mathsf{A}\to\mathsf{A}$ as a linear operator $L^2(\mathsf{A},\psi)\tens L^2(\mathsf{A},\psi)\to L^2(\mathsf{A},\psi)$ we have
\[
mm^*=\delta(\psi)^2\id_{L^2(\mathsf{A},\psi)}
\]
for some positive constant $\delta(\psi)$. Since $\mathsf{A}$ and $L^2(\mathsf{A},\psi)$ are naturally isomorphic as vector spaces, we usually write $\id_{\mathsf{A}}$ instead of $\id_{L^2(\mathsf{A},\psi)}$ even when viewed as a map of Hilbert spaces.
\end{definition}

Let $\mathfrak{C}$ be the category of $\cst$-algebras as defined in \cite{Woronowicz79,Woronowicz95}, and $\mathfrak{C}^{\mathrm{op}}$ its dual category. A \emph{quantum space} $\mathbb{X}$ is an object from $\mathfrak{C}^{\mathrm{op}}$ and its corresponding $\cst$-algebra is denoted by $\C(\mathbb{X})$. We say that a quantum space $\mathbb{X}$ is \emph{finite} if the \cst-algebra $\C(\mathbb{X})$ is finite-dimensional.

A \emph{quantum graph} $\mathcal{G}$ is specified by elements of a triple $(\mathbb{V}_{\scriptscriptstyle\mathcal{G}},\psi_{\scriptscriptstyle\mathcal{G}},A_{\scriptscriptstyle\mathcal{G}})$, where
\begin{itemize}
\item $\mathbb{V}_{\scriptscriptstyle\mathcal{G}}$ is a finite quantum space,
\item $\psi_{\scriptscriptstyle\mathcal{G}}$ is a $\delta$-form on the \cst-algebra $\C(\mathbb{V}_{\scriptscriptstyle\mathcal{G}})$,
\item $A_{\scriptscriptstyle\mathcal{G}}$ is a self-adjoint operator on the GNS Hilbert space $L^2(\C(\mathbb{V}_{\scriptscriptstyle\mathcal{G}}),\psi_{\scriptscriptstyle\mathcal{G}})$ such that
\[
m(A\tens A)m^*=A
\quad\text{and}\quad
A=(\id_{\C(\mathbb{V}_{\scriptscriptstyle\mathcal{G}})}\tens\eta^* m)(\id_{\C(\mathbb{V}_{\scriptscriptstyle\mathcal{G}})}\tens  A\tens\id_{\C(\mathbb{V}_{\scriptscriptstyle\mathcal{G}})})(m^*\eta\tens\id_{\C(\mathbb{V}_{\scriptscriptstyle\mathcal{G}})}),
\]
where
\[
m\colon\C(\mathbb{V}_{\scriptscriptstyle\mathcal{G}})\tens\C(\mathbb{V}_{\scriptscriptstyle\mathcal{G}})\longrightarrow\C(\mathbb{V}_{\scriptscriptstyle\mathcal{G}})
\quad\text{and}\quad
\eta\colon\mathbb{C}\longrightarrow\C(\mathbb{V}_{\scriptscriptstyle\mathcal{G}})
\]
are the multiplication and unit maps regarded as linear operators between Hilbert spaces $L^2(\C(\mathbb{V}_{\scriptscriptstyle\mathcal{G}}),\psi_{\scriptscriptstyle\mathcal{G}})\tens L^2(\C(\mathbb{V}_{\scriptscriptstyle\mathcal{G}}),\psi_{\scriptscriptstyle\mathcal{G}})\to L^2(\C(\mathbb{V}_{\scriptscriptstyle\mathcal{G}}),\psi_{\scriptscriptstyle\mathcal{G}})$ and $\CC\to L^2(\C(\mathbb{V}_{\scriptscriptstyle\mathcal{G}}),\psi_{\scriptscriptstyle\mathcal{G}})$ respectively.
\end{itemize}

For alternative approaches to quantum graphs and comparison between them see \cite{Daws} and references therein. In what follows, the \cst-algebra $\C(\mathbb{V}_{\scriptscriptstyle\mathcal{G}})$ will be denoted by $\C(\mathcal{G})$ and the Hilbert space $L^2(\C(\mathbb{V}_{\scriptscriptstyle\mathcal{G}}),\psi_{\scriptscriptstyle\mathcal{G}})$ by $L^2(\mathcal{G})$. We will also shorten $\delta(\psi_{\scriptscriptstyle\mathcal{G}})$ to $\delta_{\scriptscriptstyle\mathcal{G}}$. A quantum graph $\mathcal{G}$ is called {\it irreflexive} if $m(A\otimes \id_{\C(\mathbb{V}_{\scriptscriptstyle\mathcal{G}})})m^\ast=0$.

In what follows, we will freely identify the \cst-algebras of functions on the quantum sets of vertices of the quantum graph with the Hilbert spaces obtained via the GNS construction for the corresponding $\delta$-forms. In particular, all linear maps between the considered \cst-algebras (homomorphisms and others) may be understood as maps of Hilbert spaces, and we will have at our disposal their hermitian adjoints.

\begin{definition}[{\cite[Definition 6.3]{BEV}}]
Let $\mathcal{G}$ be a quantum graph. The \emph{quantum symmetry group} of $\mathcal{G}$ is the compact quantum group $\QAut(\mathcal{G})$ equipped with a $\psi_{\scriptscriptstyle\mathcal{G}}$-preserving action $\rho_{\scriptscriptstyle\mathcal{G}}\colon\C(\mathcal{G})\to\C(\mathcal{G})\tens\C(\QAut(\mathcal{G}))$ such that $\rho_{\scriptscriptstyle\mathcal{G}}\comp{A_{\scriptscriptstyle\mathcal{G}}}=(A_{\scriptscriptstyle\mathcal{G}}\tens\id_{\C(\QAut(\mathcal{G}))})\comp\rho_{\scriptscriptstyle\mathcal{G}}$ and which is universal among all $\psi_{\scriptscriptstyle\mathcal{G}}$-preserving compact quantum group actions $\alpha\colon\C(\mathcal{G})\to\C(\mathcal{G})\tens\C(\mathbb{G})$ such that $\alpha\comp A_{\scriptscriptstyle\mathcal{G}} = (A_{\scriptscriptstyle\mathcal{G}}\tens\id_{\C(\mathbb{G})})\comp\alpha$.
\end{definition}

Any choice of a basis $\{e_1,\dotsc,e_n\}$ in $\C(\mathcal{G})$ determines elements $u_{i,j}$ of $\C(\QAut(\mathcal{G}))$ by
\[
\rho_{\scriptscriptstyle\mathcal{G}}(e_j)=\sum_{i=1}^n e_i\tens u_{i,j}.
\]
If the basis $\{e_1,\dotsc,e_n\}$ is orthonormal in $L^2(\mathcal{G})$ then the matrix $u$ with elements $u_{i,j}$ is unitary. In all cases it is a finite-dimensional representation of $\QAut(\mathcal{G})$ and $\C(\QAut(\mathcal{G}))$ is generated by $\{u_{i,j}\,|\,i,j=1,\dotsc,\dim{\C(\mathcal{G})}\}$. The compatibility of $\rho_{\scriptscriptstyle\mathcal{G}}$ with $A_{\scriptscriptstyle\mathcal{G}}$ is equivalent to $(\I\tens{A_{\scriptscriptstyle\mathcal{G}}})u=u(\I\tens{A_{\scriptscriptstyle\mathcal{G}}})$.

The quantum symmetry group of a quantum graph $\mathcal{G}$ is by construction a quantum subgroup of a well-studied quantum group $\Aut^+(\C(\mathbb{V}_{\scriptscriptstyle\mathcal{G}}),\psi_{\scriptscriptstyle\mathcal{G}})$ of symmetries of the quantum space $\mathbb{V}_{\scriptscriptstyle\mathcal{G}}$ together with the $\delta$-form $\psi_{\scriptscriptstyle\mathcal{G}}$ (the action of this larger quantum group does not preserve the adjacency matrix). The quantum groups of symmetries of finite quantum spaces equipped with a $\delta$-form were introduced in \cite{Banica02}. Finally, we note that in the existing literature the notion of an action of quantum groups is often referred to as a coaction of the corresponding \cst-algebras. For brevity, however, we adopt the former convention.

\subsection{Actions preserving a partition of unity}\label{subsec:action}\hspace*{\fill}

We now briefly introduce a construction not directly related to quantum graphs which will prove useful later. 

Let $\sA$ be a unital \cst-algebra and let $\rho\colon\sA\to\sA\tens\C(\GG)$ be an action of a compact quantum group $\GG$ on the quantum space underlying $\sA$. In the subsequent sections of this paper, $\sA$ will often take the form $\C(\mathbb{X})$ or $\C(\mathbb{X}) \otimes \B(\mathscr{H})$, for some Hilbert space $\mathscr{H}$, depending on the context. Let $\mathfrak{P}=\{P_1,\dotsc,P_n\}$ be a partition of unity in $\sA$. Let $\sJ$ be the ideal in $\C(\GG)$ generated by the elements
\[
\bigl\{(\omega\tens\id)\rho(P_a)-\omega(P_a)\I\,\bigr|\bigl.\,\omega\in\sA^*,\:a=1,\dotsc,n\bigr\}.
\]
Denote the quotient $\C(\GG)/\sJ$ by $\C(\GP)$ and let $\pi\colon\C(\GG)\to\C(\GP)$ be the corresponding quotient map. Furthermore, let $\rP=(\id\tens\pi)\comp\rho$. Then $\rP\colon\sA\to\sA\tens\C(\GP)$ and $\rP(P_a)=P_a\tens\I$ for all $a$ (since slice maps $(\omega\tens\id)$ separate elements of $\sA\tens\C(\GG)$).

Next, let $\mathbb{Y}$ be any compact quantum space and let $\ph\colon\C(\GG)\to\C(\mathbb{Y})$ be a unital $*$-homomorphism such that $(\id\tens\ph)\rho(P_a)=P_a\tens\I$ for all $a$. Then for any $\omega\in\sA^*$ and any $a$ we have
\[
0=(\omega\tens\id)\bigl((\id\tens\ph)\rho(P_a)-P_a\tens\I\bigr)=\ph\bigl((\omega\tens\id)\rho(P_a)-\omega(P_a)\I\bigr),
\]
so $\sJ\subset\ker\ph$. It follows that there exists a unique $\Lambda\colon\C(\GP)\to\C(\mathbb{Y})$ such that $\ph=\Lambda\comp\pi$.

Now take $\GP\times\GP$ for $\mathbb{Y}$ in the sense that $\C(\mathbb{Y})=\C(\GP)\tens\C(\GP)$ and let $\ph=(\pi\tens\pi)\comp\Delta_\GG$, where $\Delta_\GG$ is the comultiplication on $\C(\GG)$. Then by the reasoning above there exists a unique $\Delta_{\GP}\colon\C(\GP)\to\C(\GP)\tens\C(\GP)$ such that $(\pi\tens\pi)\comp\Delta_\GG=\Delta_{\GP}\comp\pi$. Moreover, we have
\[
(\rP\tens\id)\comp\rP=(\id\tens\ph)\comp\rho=(\id\tens\Delta_{\GP})\comp\rP.
\]
Of course
\[
\resizebox{\textwidth}{!}{\ensuremath{
\begin{aligned}
(\Delta_{\GP}\tens\id)\comp\Delta_{\GP}\comp\pi&=(\Delta_{\GP}\tens\id)\comp(\pi\tens\pi)\comp\Delta_\GG=(\pi\tens\pi\tens\pi)\comp(\Delta_\GG\tens\id)\comp\Delta_\GG\\
&=(\pi\tens\pi\tens\pi)\comp(\id\tens\Delta_\GG)\comp\Delta_\GG=(\id\tens\Delta_{\GP})\comp(\pi\tens\pi)\comp\Delta_\GG
=(\id\tens\Delta_{\GP})\comp\Delta_{\GP}\comp\pi,
\end{aligned}
}}
\]
so $\Delta_{\GP}$ is coassociative, since $\pi$ is surjective. Finally, applying $\pi\tens\pi$ to both sides of the equalities
\[
\overline{\bigl(\C(\GG)\tens\I\bigr)\Delta_\GG\bigl(\C(\GG)\bigr)}=\C(\GG)\tens\C(\GG)\quad\text{and}\quad
\overline{\Delta_\GG\bigl(\C(\GG)\bigr)\bigl(\I\tens\C(\GG)\bigr)}=\C(\GG)\tens\C(\GG)
\]
we find that the density conditions are satisfied for $(\C(\GP),\Delta_{\GP})$ and consequently $\GP$ is a compact quantum group.

\subsection{Mycielskians of quantum graphs}\label{sect:intro}\hspace*{\fill}

Here we recall the construction introduced in \cite{BochKaspMyc}. For $r\geq{2}$ the \emph{Mycielski construction of order $r-1$} for quantum graphs is the procedure of passing from a quantum graph $\mathcal{G}$ to a new quantum graph $\mu_{r-1}(\mathcal{G})$ defined as follows (\cite[Section 4]{BochKaspMyc}):
\begin{itemize}
\item the quantum space $\mathbb{V}_{\scriptscriptstyle\mu_{r-1}(\mathcal{G})}$ is the disjoint union of a point and $r$ copies of $\mathbb{V}_{\scriptscriptstyle\mathcal{G}}$: $\C(\mu_{r-1}(\mathcal{G}))=\CC\oplus\bigoplus\limits_{k=1}^r\C(\mathcal{G})$,
\item the state $\psi_{\scriptscriptstyle\mu_{r-1}(\mathcal{G})}$ is defined by the formula
\[
\psi_{\scriptscriptstyle\mu_{r-1}(\mathcal{G})}
\left(\left[
\begin{smallmatrix}
\lambda\\x_1\\[-2pt]\vdots\\[2pt]x_r
\end{smallmatrix}
\right]\right)=\tfrac{1}{1+r\delta_{\scriptscriptstyle\mathcal{G}}^2}\biggl(\lambda+\delta_{\scriptscriptstyle\mathcal{G}}^2\sum_{k=1}^r\psi_{\scriptscriptstyle\mathcal{G}}(x_i)\biggr)
\]
(we note  the obvious identification $L^2(\mu_{r-1}(\mathcal{G}))=\CC\oplus\bigoplus\limits_{k=1}^r L^2(\mathcal{G})$ as vector spaces),
\item the quantum adjacency matrix $A_{\scriptscriptstyle\mu_{r-1}(\mathcal{G})}$ acts on $L^2(\mu_{r-1}(\mathcal{G}))$ as
\[
A_{\scriptscriptstyle\mu_{r-1}(\mathcal{G})}
\left[\begin{smallmatrix}
\lambda\\x_1\\[-2pt]\vdots\\[2pt]x_r
\end{smallmatrix}\right]=
\left[\begin{smallmatrix}
\delta_{\scriptscriptstyle\mathcal{G}}^2\psi_{\scriptscriptstyle\mathcal{G}}(x^r)\\
A_{\scriptscriptstyle\mathcal{G}}(x_1+x_2)\\
A_{\scriptscriptstyle\mathcal{G}}(x_1+x_3)\\[-2pt]\vdots\\[2pt]
A_{\scriptscriptstyle\mathcal{G}}(x_{r-2}+x_{r})\\
\lambda\I_{\C(\mathcal{G})}+A_{\scriptscriptstyle\mathcal{G}}x_{r-1}
\end{smallmatrix}\right].
\]
\end{itemize}
Additionally, for $r=1$ we let $\mu_{r-1}(\mathcal{G})=\mu_0(\mathcal{G})$ be the original quantum graph $\mathcal{G}$. The quantum graph $\mu_{r-1}(\mathcal{G})$ contains $r$ ``copies'' of the original graph $\mathcal{G}$. In particular $\mu_1(\mathcal{G})$ also denoted $\mu(\mathcal{G})$ is a non-commutative generalization of the Mycielskian of a finite graph (\cite{Mycielski}).% this last remark should be shifted to the Introduction

We let $\iota_0\colon\CC\to L^2(\mu_{r-1}(\mathcal{G}))$ and $\iota_1,\dotsc,\iota_r\colon L^2(\mathcal{G})\to L^2(\mu_{r-1}(\mathcal{G}))$ denote the isometric embeddings onto the respective direct summands and $\iota_0^*\colon L^2(\mu_{r-1}(\mathcal{G}))\to\CC$ and $\iota_1^*,\dotsc,\iota_r^*\colon L^2(\mu_{r-1}(\mathcal{G}))\to L^2(\mathcal{G})$ are their hermitian adjoints:
\begin{align*}
\iota_0(\lambda)&=\sqrt{1+r\delta_{\scriptscriptstyle\mathcal{G}}^2}
\left[\begin{smallmatrix}
\lambda\\0\\[-5pt]\vdots\\0
\end{smallmatrix}\right],&
\iota_0^*
\left[\begin{smallmatrix}
\lambda\\x_1\\[-2pt]\vdots\\[2pt]x_r
\end{smallmatrix}\right]
&=\sqrt{\tfrac{1}{1+r\delta_{\scriptscriptstyle\mathcal{G}}^2}}\,\lambda,\\
\iota_k(x)
&=\sqrt{\tfrac{1+r\delta_{\scriptscriptstyle\mathcal{G}}^2}{\delta_{\scriptscriptstyle\mathcal{G}}^2}}
\left[\begin{smallmatrix}
0\\[-5pt]\vdots\\x\\[-5pt]\vdots\\0
\end{smallmatrix}\right],
&\iota_k^*
\left[\begin{smallmatrix}
\lambda\\x_1\\[-2pt]\vdots\\[2pt]x_r
\end{smallmatrix}\right]
&=\sqrt{\tfrac{\delta_{\scriptscriptstyle\mathcal{G}}^2}{1+r\delta_{\scriptscriptstyle\mathcal{G}}^2}}\,x_k
\end{align*}
for $k=1,\dotsc,r$. Note that although $\iota_0,\dotsc,\iota_r$, and their adjoints are not multiplicative, they are all $*$-maps. In this notation, we have (cf.~\cite[Eq.~(9)]{BochKaspMyc}):
\[
A_{\scriptscriptstyle\mu_{r-1}(\mathcal{G})}=\delta_{\scriptscriptstyle\mathcal{G}}\iota_r\eta_{\scriptscriptstyle\mathcal{G}}\iota_0^\ast +\delta_{\scriptscriptstyle\mathcal{G}}\iota_0\eta_{\scriptscriptstyle\mathcal{G}}^\ast\iota_r^\ast +\iota_1 A_{\scriptscriptstyle\mathcal{G}}\iota_1^\ast +\sum\limits_{k=1}^{r-1}\bigl(\iota_k A_{\scriptscriptstyle\mathcal{G}}\iota_{k+1}^\ast +\iota_{k+1} A_{\scriptscriptstyle\mathcal{G}}\iota_k^\ast\bigr).
\]

\section{Quantum isomorphisms of Mycielskians}\label{sec:iso}

Let $\mathcal{G}$ and $\mathcal{F}$ be quantum graphs. The notion of \emph{quantum isomorphism} of $\mathcal{G}$ and $\mathcal{F}$ has been studied in \cite[Definition 4.4, Corollary 4.8]{Brannan_2019} and \cite[Definition 6.5, Theorem 6.6]{BEV}. The definitions of quantum isomorphism of quantum graphs given in the two papers are equivalent (\cite[Theorem 6.6]{BEV}) and we will use both formulations.

According to \cite[Definition 6.5]{BEV} we say that $\mathcal{G}$ and $\mathcal{F}$ are \emph{quantum isomorphic} if there is a monoidal equivalence of the quantum automorphism groups $\Aut^+(\C(\mathbb{V}_{\scriptscriptstyle\mathcal{G}}),\psi_{\scriptscriptstyle\mathcal{G}})$ and $\Aut^+(\C(\mathbb{V}_{\scriptscriptstyle\mathcal{F}}),\psi_{\scriptscriptstyle\mathcal{F}})$ which maps the monoid object $\C(\mathbb{V}_{\scriptscriptstyle\mathcal{G}})$ to the monoid object $\C(\mathbb{V}_{\scriptscriptstyle\mathcal{F}})$ and maps the morphism $A_{\scriptscriptstyle\mathcal{G}}$ to $A_{\scriptscriptstyle\mathcal{F}}$ (the notion of monoidal equivalence of compact quantum groups is described e.g.~in \cite{Rijdt}).

The equivalent formulation of quantum isomorphism of $\mathcal{G}$ and $\mathcal{H}$ given in \cite[Section 4]{Brannan_2019} involves the $*$-algebra $\mathcal{O}(\QAut(\mathcal{F}),\QAut(\mathcal{G}))$ generated by elements $\{p_{ij}\}$ of a unitary matrix with the relations ensuring that upon a choice of orthonormal bases $\{e_j\}$ and $\{f_i\}$ of $\C(\mathbb{V}_{\scriptscriptstyle\mathcal{G}})$ and $\C(\mathbb{V}_{\scriptscriptstyle\mathcal{F}})$ the map
\[
\rho_{\scriptscriptstyle\mathcal{F},\mathcal{G}}\colon e_i\longmapsto\sum_{i}f_i\tens p_{ij}
\]
is a unital $*$-homomorphism $\C(\mathcal{G})\to\C(\mathcal{F})\tens\mathcal{O}(\QAut(\mathcal{F}),\QAut(\mathcal{G}))$ such that $\rho_{\scriptscriptstyle\mathcal{F},\mathcal{G}} A_{\scriptscriptstyle\mathcal{G}}=(A_{\scriptscriptstyle\mathcal{F}}\tens\id)\rho_{\scriptscriptstyle\mathcal{F},\mathcal{G}}$. The quantum graphs $\mathcal{G}$ and $\mathcal{F}$ are isomorphic if and only if the algebra $\mathcal{O}(\QAut(\mathcal{F}),\QAut(\mathcal{G}))$ is non-zero. Note that the matrix $p=[p_{ij}]_{i,j}$ can be treated as an element of $\B(L^2(\mathcal{G}),L^2(\mathcal{F}))\tens\mathcal{O}(\QAut(\mathcal{F}),\QAut(\mathcal{G}))$.

\begin{proposition}\label{prop:deltas}
Let $\mathcal{F}$ and $\mathcal{G}$ be quantum graphs. If $\mathcal{F}$ and $\mathcal{G}$ are quantum isomorphic, then $\delta_{\scriptscriptstyle\mathcal{F}}=\delta_{\scriptscriptstyle\mathcal{G}}$.
\end{proposition}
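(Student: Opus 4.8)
The plan is to transport the defining relation $m_{\mathcal{G}}m_{\mathcal{G}}^*=\delta_{\mathcal G}^2\,\id_{L^2(\mathcal G)}$ of the $\delta$-form across the isomorphism and to recognise $\delta_{\mathcal F}^2$ on the other side. I will use the Brannan-type formulation, which supplies a unital $*$-homomorphism $\rho_{\scriptscriptstyle\mathcal{F},\mathcal{G}}\colon\C(\mathcal{G})\to\C(\mathcal{F})\tens\mathcal{O}$, with $\mathcal{O}:=\mathcal{O}(\QAut(\mathcal{F}),\QAut(\mathcal{G}))$, encoded by the matrix $p\in\B(L^2(\mathcal{G}),L^2(\mathcal{F}))\tens\mathcal{O}$, which is unitary once the bases $\{e_j\}$, $\{f_i\}$ are chosen orthonormal in $L^2(\mathcal G)$, $L^2(\mathcal F)$. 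The hypothesis that $\mathcal{F}$ and $\mathcal{G}$ are quantum isomorphic enters precisely through $\mathcal{O}\neq 0$.

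First I would recast the two algebraic properties of $\rho_{\scriptscriptstyle\mathcal{F},\mathcal{G}}$ as operator identities between Hilbert spaces. Multiplicativity, $\rho_{\scriptscriptstyle\mathcal{F},\mathcal{G}}(xy)=\rho_{\scriptscriptstyle\mathcal{F},\mathcal{G}}(x)\rho_{\scriptscriptstyle\mathcal{F},\mathcal{G}}(y)$, becomes
\[
p\,(m_{\mathcal{G}}\tens\id_{\mathcal{O}})=(m_{\mathcal{F}}\tens\id_{\mathcal{O}})\,p^{(2)},\qquad p^{(2)}:=p_{13}\,p_{23},
\]
as maps $L^2(\mathcal{G})\tens L^2(\mathcal{G})\tens\mathcal{O}\to L^2(\mathcal{F})\tens\mathcal{O}$, where $p_{13},p_{23}$ denote the two leg-placements of $p$ on $L^2(\mathcal G)\tens L^2(\mathcal G)\tens\mathcal{O}$. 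One checks this by expanding both sides in the structure constants of $\C(\mathcal{G})$ and $\C(\mathcal{F})$ and comparing coefficients, reproducing the relation $\sum_m c^{\mathcal{G}}_{jk,m}p_{sm}=\sum_{i,l}c^{\mathcal{F}}_{il,s}p_{ij}p_{lk}$. Taking hermitian adjoints (on the $\B$-factor together with $*$ on $\mathcal{O}$) yields the companion identity $(m_{\mathcal{G}}^*\tens\id_{\mathcal{O}})\,p^*=(p^{(2)})^*\,(m_{\mathcal{F}}^*\tens\id_{\mathcal{O}})$.

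Next I would conjugate the $\delta$-form relation by $p$ and simplify both sides. On the one hand, using $m_{\mathcal{G}}m_{\mathcal{G}}^*=\delta_{\mathcal G}^2\id$ and $pp^*=\id_{L^2(\mathcal F)}\tens\I$,
\[
p\,(m_{\mathcal{G}}m_{\mathcal{G}}^*\tens\id_{\mathcal{O}})\,p^*=\delta_{\mathcal G}^2\,pp^*=\delta_{\mathcal G}^2\,(\id_{L^2(\mathcal F)}\tens\I).
\]
On the other hand, inserting the two operator identities of the previous step,
\[
p\,(m_{\mathcal{G}}m_{\mathcal{G}}^*\tens\id_{\mathcal{O}})\,p^*=(m_{\mathcal{F}}\tens\id_{\mathcal{O}})\,p^{(2)}(p^{(2)})^*\,(m_{\mathcal{F}}^*\tens\id_{\mathcal{O}}).
\]
The decisive point is that $p^{(2)}(p^{(2)})^*=\id$: since $p^{(2)}(p^{(2)})^*=p_{13}\,p_{23}p_{23}^*\,p_{13}^*$, the row-orthonormality $\sum_j p_{ij}p_{kj}^*=\delta_{ik}\I$ (that is, $pp^*=\id$) collapses first $p_{23}p_{23}^*$ and then $p_{13}p_{13}^*$ to the identity operator, \emph{despite} the two copies of $p$ sharing the single $\mathcal{O}$-leg. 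Hence the right-hand side equals $(m_{\mathcal{F}}m_{\mathcal{F}}^*)\tens\I=\delta_{\mathcal F}^2\,(\id_{L^2(\mathcal F)}\tens\I)$, so $\delta_{\mathcal G}^2(\id_{L^2(\mathcal F)}\tens\I)=\delta_{\mathcal F}^2(\id_{L^2(\mathcal F)}\tens\I)$. Because $\mathcal{O}\neq 0$ the element $\id_{L^2(\mathcal F)}\tens\I$ is non-zero, giving $\delta_{\mathcal G}^2=\delta_{\mathcal F}^2$ and thus $\delta_{\mathcal F}=\delta_{\mathcal G}$, as both are positive.

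I expect the only genuine obstacle to be the verification that $p^{(2)}(p^{(2)})^*=\id$: the two legs of $p$ in $p^{(2)}=p_{13}p_{23}$ act on a common, non-commutative $\mathcal{O}$-factor, so one must check that collapsing them via unitarity of $p$ really produces the honest identity operator and is not spoiled by the ordering of factors in $\mathcal{O}$. All remaining steps are routine leg-chasing once multiplicativity is written in the operator form above. (Conceptually, the same conclusion follows from the monoidal-equivalence formulation: a unitary tensor equivalence carrying the monoid object $\C(\mathbb{V}_{\scriptscriptstyle\mathcal{G}})$ to $\C(\mathbb{V}_{\scriptscriptstyle\mathcal{F}})$ sends $m_{\mathcal{G}}\mapsto m_{\mathcal{F}}$, hence $m_{\mathcal{G}}^*\mapsto m_{\mathcal{F}}^*$ and $\id\mapsto\id$, so the scalar $\delta^2$ defined by $mm^*=\delta^2\id$ is preserved.)
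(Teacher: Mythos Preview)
Your argument is correct, and the worry you flag about $p^{(2)}(p^{(2)})^*$ is unfounded: writing $p^{(2)}=p_{13}p_{23}$ as the $\mathcal{O}$-valued matrix with $((i,k),(j,l))$-entry $p_{ij}p_{kl}$, the $((i,k),(i',k'))$-entry of $p^{(2)}(p^{(2)})^*$ is $\sum_{j,l}p_{ij}p_{kl}p_{k'l}^*p_{i'j}^*$; summing first over $l$ (using $\sum_l p_{kl}p_{k'l}^*=\delta_{kk'}\I$) and then over $j$ gives $\delta_{ii'}\delta_{kk'}\I$, with no ordering obstruction.

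Your route, however, is genuinely different from the paper's. The paper does not unwind the Brannan-type unitary $p$ at all; it simply observes that a quantum isomorphism entails a monoidal equivalence between $\Aut^+(\C(\mathbb{V}_{\scriptscriptstyle\mathcal{F}}),\psi_{\scriptscriptstyle\mathcal{F}})$ and $\Aut^+(\C(\mathbb{V}_{\scriptscriptstyle\mathcal{G}}),\psi_{\scriptscriptstyle\mathcal{G}})$ and then invokes \cite[Theorem~4.7]{Rijdt}, which already forces the $\delta$-constants to coincide. Your approach is more elementary and fully self-contained (no appeal to the classification of monoidal equivalences of free orthogonal/unitary type quantum groups), at the cost of a short leg-number computation; the paper's approach is a one-line citation that hides the analytic content in the referenced theorem. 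Your closing parenthetical remark---that a unitary tensor equivalence sends $m\mapsto m$, $m^*\mapsto m^*$, $\id\mapsto\id$, hence preserves the scalar in $mm^*=\delta^2\id$---is essentially the conceptual core behind the paper's citation.
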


\begin{proof}
According to the first version of the definition of quantum isomorphism of $\mathcal{G}$ and $\mathcal{F}$ given above, the quantum groups $\Aut^+(\C(\mathbb{V}_{\scriptscriptstyle\mathcal{F}}),\psi_{\scriptscriptstyle\mathcal{F}})$ and $\Aut^+(\C(\mathbb{V}_{\scriptscriptstyle\mathcal{G}}),\psi_{\scriptscriptstyle\mathcal{G}})$ are monoidally equivalent. Hence the result follows from \cite[Theorem 4.7]{Rijdt}.
\end{proof}

\begin{proposition}%\label{prop:iso}
Let $\mathcal{G}$ and $\mathcal{F}$ be quantum isomorphic. Then $\mu_{r-1}(\mathcal{G})$ and $\mu_{r-1}(\mathcal{F})$ are quantum isomorphic for any $r\geq 1$.
\end{proposition}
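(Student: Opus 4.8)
The plan is to use the second, $*$-algebraic description of quantum isomorphism recalled above and to prove that the universal algebra $\mathcal{O}(\QAut(\mu_{r-1}(\mathcal{F})),\QAut(\mu_{r-1}(\mathcal{G})))$ is non-zero. Write $\mathcal{O}=\mathcal{O}(\QAut(\mathcal{F}),\QAut(\mathcal{G}))$, which is non-zero by hypothesis, and let $\rho_{\mathcal{F},\mathcal{G}}\colon\C(\mathcal{G})\to\C(\mathcal{F})\tens\mathcal{O}$ be the associated unital $*$-homomorphism, encoded by the unitary $p$ with $(A_{\mathcal{F}}\tens\id)p=p(A_{\mathcal{G}}\tens\id)$. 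I would construct an analogous unital $*$-homomorphism $\rho_\mu\colon\C(\mu_{r-1}(\mathcal{G}))\to\C(\mu_{r-1}(\mathcal{F}))\tens\mathcal{O}$ intertwining $A_{\mu_{r-1}(\mathcal{G})}$ with $A_{\mu_{r-1}(\mathcal{F})}$. Its matrix entries, computed with respect to orthonormal bases of $L^2(\mu_{r-1}(\mathcal{G}))$ and $L^2(\mu_{r-1}(\mathcal{F}))$, then satisfy all the defining relations of $\mathcal{O}(\QAut(\mu_{r-1}(\mathcal{F})),\QAut(\mu_{r-1}(\mathcal{G})))$, so the universal property of the latter yields a unital $*$-homomorphism from it into $\mathcal{O}$. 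Since a unital homomorphism out of the zero algebra cannot land in a non-zero algebra, this forces $\mathcal{O}(\QAut(\mu_{r-1}(\mathcal{F})),\QAut(\mu_{r-1}(\mathcal{G})))\neq 0$, which is precisely quantum isomorphism of $\mu_{r-1}(\mathcal{G})$ and $\mu_{r-1}(\mathcal{F})$.

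To build $\rho_\mu$ I would exploit the decomposition $\C(\mu_{r-1}(\mathcal{G}))=\CC\oplus\bigoplus_{k=1}^r\C(\mathcal{G})$, setting $\rho_\mu$ equal to $\lambda\mapsto\lambda\tens\I$ on the master-vertex summand $\CC$ and equal to $\rho_{\mathcal{F},\mathcal{G}}$ on each of the $r$ copies of $\C(\mathcal{G})$. As a direct sum of unital $*$-homomorphisms, $\rho_\mu$ is automatically a unital $*$-homomorphism, so the only real content is compatibility with the adjacency matrices. At the Hilbert-space level $\rho_\mu$ corresponds to the intertwiner
\[
P=\iota_0^{\mathcal{F}}(\iota_0^{\mathcal{G}})^*\tens\I+\sum_{k=1}^r(\iota_k^{\mathcal{F}}\tens\id)\,p\,\bigl((\iota_k^{\mathcal{G}})^*\tens\id\bigr),
\]
which is unitary because $p$ is unitary and the ranges of $\iota_0^{\mathcal{F}},\dotsc,\iota_r^{\mathcal{F}}$ are mutually orthogonal and span $L^2(\mu_{r-1}(\mathcal{F}))$. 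I stress that identifying $\rho_\mu$ with $P$ requires the normalisation constants appearing in $\iota_k^{\mathcal{G}}$ and $\iota_k^{\mathcal{F}}$ to agree, and this is exactly what $\delta_{\mathcal{F}}=\delta_{\mathcal{G}}$ guarantees, by Proposition~\ref{prop:deltas}.

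The intertwining $(A_{\mu_{r-1}(\mathcal{F})}\tens\id)P=P(A_{\mu_{r-1}(\mathcal{G})}\tens\id)$ I would then verify term by term from the closed formula for $A_{\mu_{r-1}}$, using $P(\iota_k^{\mathcal{G}}\tens\id)=(\iota_k^{\mathcal{F}}\tens\id)p$ and $P(\iota_0^{\mathcal{G}}\tens\id)=\iota_0^{\mathcal{F}}\tens\I$ together with their adjoints, all immediate from the orthogonality relations among the $\iota$'s. For the summands of the form $\iota_a A_{\mathcal{G}}\iota_b^*$ carrying the copies of $\mathcal{G}$, the match is immediate from $(A_{\mathcal{F}}\tens\id)p=p(A_{\mathcal{G}}\tens\id)$. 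For the two master-vertex summands $\delta_{\mathcal{G}}\iota_r\eta_{\mathcal{G}}\iota_0^*$ and $\delta_{\mathcal{G}}\iota_0\eta_{\mathcal{G}}^*\iota_r^*$ one additionally needs $p(\eta_{\mathcal{G}}\tens\id)=\eta_{\mathcal{F}}\tens\I$, which is unitality of $\rho_{\mathcal{F},\mathcal{G}}$, and its adjoint counterpart $(\eta_{\mathcal{F}}^*\tens\id)p=\eta_{\mathcal{G}}^*\tens\id$, which expresses preservation of the $\delta$-forms (equivalently, follows by taking adjoints in the unitality relation and using $p^*p=\I$); combined once more with $\delta_{\mathcal{F}}=\delta_{\mathcal{G}}$, the two sides then coincide.

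The main obstacle, and the only place where the hypothesis is used beyond the formal transport of $p$, is the bookkeeping of the normalisation constants and the correct conversion of the master-vertex terms: one must check that the factors $\sqrt{(1+r\delta_{\scriptscriptstyle\mathcal{G}}^2)/\delta_{\scriptscriptstyle\mathcal{G}}^2}$ concealed in the embeddings cancel identically on both sides, which works exactly because $\delta_{\mathcal{F}}=\delta_{\mathcal{G}}$, and that the non-multiplicative maps $\eta_{\mathcal{G}},\eta_{\mathcal{G}}^*$ are transported by $p$ via unitality and state-preservation. Once these are in place, the computation is routine and uniform in $r\geq 2$; the remaining case $r=1$ is trivial, since there $\mu_0(\mathcal{G})=\mathcal{G}$ and the statement reduces to the hypothesis.
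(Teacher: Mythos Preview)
Your proposal is correct and follows essentially the same route as the paper: you build the block-diagonal unitary $P=\iota_0^{\mathcal{F}}(\iota_0^{\mathcal{G}})^*\tens\I+\sum_{k}(\iota_k^{\mathcal{F}}\tens\id)\,p\,((\iota_k^{\mathcal{G}})^*\tens\id)$, verify the intertwining term by term using $p\eta_{\mathcal{G}}=\eta_{\mathcal{F}}$, $(A_{\mathcal{F}}\tens\id)p=p(A_{\mathcal{G}}\tens\id)$ and $\delta_{\mathcal{F}}=\delta_{\mathcal{G}}$, and conclude non-triviality of the universal algebra via its universal property. This is precisely the paper's argument, with your $P$ being their $\widehat{p}$.
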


\begin{proof}
Let
\[
p=[p_{ij}]_{i,j}\in\B\bigl(L^2(\mathcal{G}),L^2(\mathcal{F})\bigr)\tens\mathcal{O}\bigl(\QAut(\mathcal{F}),\QAut(\mathcal{G})\bigr)
\]
be the unitary matrix defining $\mathcal{O}(\QAut(\mathcal{F}),\QAut(\mathcal{G}))$ as above (remember the latter is non-zero) and let
\[
\rho_{\scriptscriptstyle\mathcal{F},\mathcal{G}}\colon\C(\mathcal{G})\ni e_j\longmapsto\sum\limits_i f_i\tens p_{ij}\in\C(\mathcal{F})\tens\mathcal{O}\bigl(\QAut(\mathcal{F}),\QAut(\mathcal{G})\bigr)
\]
be the corresponding unital $*$-homomorphism intertwining the quantum adjacency matrices. By Proposition~\ref{prop:deltas} we know that $\delta_{\scriptscriptstyle\mathcal{G}}=\delta_{\scriptscriptstyle\mathcal{F}}$ and for the remainder of the proof we will denote this value by $\delta$. The maps introduced above can be represented as
\[
p\colon L^2(\mathcal{G})\tens\mathcal{O}\bigl(\QAut(\mathcal{F}),\QAut(\mathcal{G})\bigr)\ni\xi\tens a\longmapsto\sum\limits_{i,j}\ket{f_i}\is{e_j}{\xi}\tens p_{ij}a\in L^2(\mathcal{F})\tens\mathcal{O}\bigl(\QAut(\mathcal{F}),\QAut(\mathcal{G})\bigr)
\]
and
\[
\rho_{\scriptscriptstyle\mathcal{F},\mathcal{G}}(\xi)=p(\xi\tens \I_{\mathcal{O}(\QAut(\mathcal{F}),\QAut(\mathcal{G}))}).
\]
We define an element $\widehat{p}\in\B(L^2(\mu_{r-1}(\mathcal{G})),L^2(\mu_{r-1}(\mathcal{F})))\tens\mathcal{O}(\QAut(\mathcal{F}),\QAut(\mathcal{G}))$:
\[
\widehat{p}=\varpi_{\scriptscriptstyle\mathcal{F},0}\varpi_{\scriptscriptstyle\mathcal{G},0}^*+\sum\limits_{k=1}^r\varpi_{\scriptscriptstyle\mathcal{F},k} p\varpi_{\scriptscriptstyle\mathcal{G},k}^*,
\]
where $\varpi_{\scriptscriptstyle\mathcal{G},j}=\iota_{\scriptscriptstyle\mathcal{G},j}\tens\I_{\mathcal{O}(\QAut(\mathcal{F}),\QAut(\mathcal{G}))}$ and $\varpi_{\scriptscriptstyle\mathcal{F},j}=\iota_{\scriptscriptstyle\mathcal{F},j}\tens\I_{\mathcal{O}(\QAut(\mathcal{F}),\QAut(\mathcal{G}))}$, with $\iota_{\scriptscriptstyle\mathcal{G},j}$ and $\iota_{\scriptscriptstyle\mathcal{F},j}$ ($j=1,\ldots,r$)denoting the isometric embeddings of $L^2(\mathcal{G})$ and $L^2(\mathcal{F})$ into $L^2(\mu_{r-1}(\mathcal{G}))$ and $L^2(\mu_{r-1}(\mathcal{F}))$, respectively.

First, notice that
\[
\begin{aligned}
\widehat{p}\widehat{p}^*&=\varpi_{\scriptscriptstyle\mathcal{F},0}\varpi_{\scriptscriptstyle\mathcal{F},0}^* +\sum\limits_{k=1}^r\varpi_{\scriptscriptstyle\mathcal{F},k} pp^*\varpi_{\scriptscriptstyle\mathcal{F},k}^* =\sum\limits_{j=0}^r\varpi_{\scriptscriptstyle\mathcal{F},j}\varpi_{\scriptscriptstyle\mathcal{F},j}^*=\id_{L^2(\mu_{r-1}(\mathcal{F}))},\\
\widehat{p}^*\widehat{p}&=\varpi_{\scriptscriptstyle\mathcal{G},0}\varpi_{\scriptscriptstyle\mathcal{G},0}^* +\sum\limits_{k=1}^r\varpi_{\scriptscriptstyle\mathcal{G},k} pp^*\varpi_{\scriptscriptstyle\mathcal{G},k}^* =\sum\limits_{j=0}^r\varpi_{\scriptscriptstyle\mathcal{G},j}\varpi_{\scriptscriptstyle\mathcal{G},j}^*=\id_{L^2(\mu_{r-1}(\mathcal{G}))},
\end{aligned}
\]
so that $\widehat{p}$ is unitary.

Next, we check the intertwining property. First, we compute
\begin{align*}
\widehat{p}A_{\scriptscriptstyle\mu_{r-1}(\mathcal{G})}&=\biggl(\varpi_{\scriptscriptstyle\mathcal{F},0}\varpi_{\scriptscriptstyle\mathcal{G},0}^* +\sum\limits_{k=1}^r\varpi_{\scriptscriptstyle\mathcal{F},k} p\varpi_{\scriptscriptstyle\mathcal{G},k}^*\biggr)\biggl(\delta\varpi_{\scriptscriptstyle\mathcal{G},r}\eta_{\scriptscriptstyle\mathcal{G}}\varpi_{\scriptscriptstyle\mathcal{G},0}^*+\delta\varpi_{\scriptscriptstyle\mathcal{G},0}\eta^*_{\scriptscriptstyle\mathcal{G}}\varpi_{\scriptscriptstyle\mathcal{G},r}^*+\varpi_{\scriptscriptstyle\mathcal{G},1} A_{\scriptscriptstyle\mathcal{G}}\varpi_{\scriptscriptstyle\mathcal{G},1}^*\\
&\qquad\qquad\qquad\qquad\qquad\qquad\qquad\qquad+\sum\limits_{k=1}^{r-1}\bigl(\varpi_{\scriptscriptstyle\mathcal{G},k} A_\mathcal{G}\varpi_{\scriptscriptstyle\mathcal{G},k+1}^* +\varpi_{\scriptscriptstyle\mathcal{G},k+1} A_\mathcal{G}\varpi_{\scriptscriptstyle\mathcal{G},k}^*\bigr)\biggr)\\
&=\delta\varpi_{\scriptscriptstyle\mathcal{F},0}\eta_{\scriptscriptstyle\mathcal{G}}^*\varpi_{\scriptscriptstyle\mathcal{G},r}^* +\delta\varpi_{\scriptscriptstyle\mathcal{F},r}p\eta_{\scriptscriptstyle\mathcal{G}}\varpi_{\scriptscriptstyle\mathcal{G},0}^* +\varpi_{\scriptscriptstyle\mathcal{F},1}pA_{\scriptscriptstyle\mathcal{G}}\varpi_{\scriptscriptstyle\mathcal{G},1}^*\\
&\qquad\qquad\qquad\qquad\qquad\qquad\qquad\qquad+\sum\limits_{j=1}^{r-1}\bigl(\varpi_{\scriptscriptstyle\mathcal{F},j}pA_{\scriptscriptstyle\mathcal{G}}\varpi_{\scriptscriptstyle\mathcal{G},j+1}^*+\varpi_{\scriptscriptstyle\mathcal{F},j+1}pA_{\scriptscriptstyle\mathcal{G}}\varpi_{\scriptscriptstyle\mathcal{G},j}^*\bigr),
\end{align*}
and similarly,
\begin{align*}
A_{\scriptscriptstyle\mu_{r-1}(\mathcal{F})}\widehat{p}=\delta\varpi_{\scriptscriptstyle\mathcal{F},0}\eta_{\scriptscriptstyle\mathcal{F}}^* p\varpi_{\scriptscriptstyle\mathcal{G},r}^* +\delta\varpi_{\scriptscriptstyle\mathcal{F},r}\eta_{\scriptscriptstyle\mathcal{F}}\varpi_{\scriptscriptstyle\mathcal{G},0}^* &+\varpi_{\scriptscriptstyle\mathcal{F},1}A_{\scriptscriptstyle\mathcal{F}}p\varpi_{\scriptscriptstyle\mathcal{G},1}^*\\
&+\sum\limits_{j=1}^{r-1}\bigl(\varpi_{\scriptscriptstyle\mathcal{F},j}A_{\scriptscriptstyle\mathcal{F}}p\varpi_{\scriptscriptstyle\mathcal{G},j+1}^*+\varpi_{\scriptscriptstyle\mathcal{F},j+1}A_{\scriptscriptstyle\mathcal{F}}p\varpi_{\scriptscriptstyle\mathcal{G},j}^*\bigr).
\end{align*}
Since $p\eta_{\scriptscriptstyle\mathcal{G}}=\eta_{\scriptscriptstyle\mathcal{F}}$, we also have $\eta_{\scriptscriptstyle\mathcal{G}}^*=\eta_{\scriptscriptstyle\mathcal{F}}^* p$. This shows that indeed the intertwining property holds. Finally, the map
\[
\rho_{\scriptscriptstyle\mu_{r-1}(\mathcal{F}),\mu_{r-1}(\mathcal{G})}
\left(\left[\begin{smallmatrix}
\lambda\\\xi_1\\[-3pt]\vdots\\[1pt]\xi_r
\end{smallmatrix}\right]\right)
=\widehat{p}
\left(\left[\begin{smallmatrix}
\lambda\\\xi_1\\[-3pt]\vdots\\[1pt]\xi_r
\end{smallmatrix}\right]\tens\I_{\mathcal{O}(\QAut(\mathcal{F}),\QAut(\mathcal{G}))}\right)
\]
is a unital $*$-homomorphism $\C(\mu_{r-1}(\mathcal{G}))\to
\C(\mu_{r-1}\mathcal{F}))\tens\mathcal{O}(\QAut(\mathcal{F}),\QAut(\mathcal{G}))$ 
by construction. Since the algebra $\mathcal{O}(\QAut(\mu_{r-1}(\mathcal{F})),\QAut(\mu_{r-1}(\mathcal{G})))$ is universal for such homomorphisms, it has a non-zero homomorphic image, so it is itself non-zero. It follows that $\mu_{r-1}(\mathcal{G})$ and $\mu_{r-1}(\mathcal{F})$ are quantum isomorphic.
\end{proof}

\section{Quantum symmetries of \texorpdfstring{$\mu_{r-1}(\mathcal{G})$}{mr(G)} from quantum symmetries of \texorpdfstring{$\mathcal{G}$}{G}}\label{sec:symm}

Our goal now is to determine how the Mycielski transformation affects the quantum symmetries of the original quantum graph. We start with the following:

\begin{proposition}\label{prop:Phir}
Let $\mathcal{G}$ be a quantum graph. Then $\QAut(\mathcal{G})$ acts faithfully on its quantum Mycielskians. More precisely, for $r\geq{2}$ define the linear map $\Phi_r\colon\C(\mu_{r-1}(\mathcal{G}))\to\C(\mu_{r-1}(\mathcal{G}))\tens\C(\QAut(\mathcal{G}))$ by
\[
\Phi_r(x)=\bigl(\iota_0\iota_0^*(x)\tens\I_{\C(\QAut(\mathcal{G}))}\bigr)+\sum_{k=1}^r(\iota_k\tens\id_{\C(\QAut(\mathcal{G}))})
\rho_{\scriptscriptstyle\mathcal{G}}\bigl(\iota_k^*(x)\bigr).
\]
Then $\Phi_r$ is a unital $*$-homomorphism and denoting by $\Delta$ the comultiplication on $\C(\QAut(\mathcal{G}))$ we have
\begin{subequations}
\begin{align}
(\Phi_r\tens\id_{\C(\QAut(\mathcal{G}))})\comp\Phi_r&=(\id_{\C(\QAut(\mathcal{G}))}\tens\Delta)\comp\Phi_r,\label{eq:PhirDelta}\\
\Phi_r\comp A_{\scriptscriptstyle\mu_{r-1}(\mathcal{G})} &= (A_{\scriptscriptstyle\mu_{r-1}(\mathcal{G})}\tens\id_{\C(\QAut(\mathcal{G}))})\comp\Phi_r,\label{eq:PhirA}
\end{align}
\end{subequations}
Moreover the set $\bigl\{(\omega\tens\id_{\C(\QAut(\mathcal{G}))})\Phi_r(x)\,\bigr|\bigl.\,x\in\C(\mu_{r-1}(\mathcal{G})),\:\omega\in\C(\mu_{r-1}(\mathcal{G}))^*\bigr\}$ generates the \mbox{\cst-algebra} $\C(\QAut(\mathcal{G}))$.
\end{proposition}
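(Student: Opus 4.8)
The plan is to reduce every assertion to a corresponding property of $\rho_{\scriptscriptstyle\mathcal{G}}$ by first rewriting $\Phi_r$ transparently. The normalisation constants carried by $\iota_0,\dotsc,\iota_r$ and their adjoints are chosen precisely so that $\iota_k\iota_k^*$ is the orthogonal projection of $L^2(\mu_{r-1}(\mathcal{G}))$ onto its $k$-th summand and so that the scalars in $(\iota_k\tens\id)\comp\rho_{\scriptscriptstyle\mathcal{G}}\comp\iota_k^*$ cancel; concretely one checks $\Phi_r(\iota_0(\lambda))=\iota_0(\lambda)\tens\I$ and $\Phi_r(\iota_k(x))=(\iota_k\tens\id)\rho_{\scriptscriptstyle\mathcal{G}}(x)$ for $k\geq1$. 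Decomposing the target as the algebra direct sum
\[
\C(\mu_{r-1}(\mathcal{G}))\tens\C(\QAut(\mathcal{G}))=\C(\QAut(\mathcal{G}))\oplus\bigoplus_{k=1}^r\bigl(\C(\mathcal{G})\tens\C(\QAut(\mathcal{G}))\bigr),
\]
the master component of $\Phi_r$ is the scalar projection followed by the unit inclusion, and the $k$-th component is $\rho_{\scriptscriptstyle\mathcal{G}}$ precomposed with the projection onto the $k$-th copy. Each component is a unital $*$-homomorphism, and a map into an algebra direct sum is a unital $*$-homomorphism exactly when all its components are; this settles the first assertion. Equation~\eqref{eq:PhirDelta} then follows by applying $\Phi_r\tens\id$ and $\id\tens\Delta$ to $\Phi_r(x)$ and comparing summands: the master slot gives $\I\tens\I=\Delta(\I)$ on both sides, while each copy reduces exactly to $(\rho_{\scriptscriptstyle\mathcal{G}}\tens\id)\comp\rho_{\scriptscriptstyle\mathcal{G}}=(\id\tens\Delta)\comp\rho_{\scriptscriptstyle\mathcal{G}}$, the coassociativity of the action $\rho_{\scriptscriptstyle\mathcal{G}}$.

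The intertwining relation~\eqref{eq:PhirA} is the computational heart and the step I expect to be most delicate. Here I would insert the expansion of $A_{\scriptscriptstyle\mu_{r-1}(\mathcal{G})}$ in terms of $\iota_k$ and $\eta_{\scriptscriptstyle\mathcal{G}}$ recalled at the end of Section~\ref{sect:intro} and compute $\Phi_r\comp A_{\scriptscriptstyle\mu_{r-1}(\mathcal{G})}$ and $(A_{\scriptscriptstyle\mu_{r-1}(\mathcal{G})}\tens\id)\comp\Phi_r$ term by term, mirroring the computation of $\widehat p\,A_{\scriptscriptstyle\mu_{r-1}(\mathcal{G})}$ and $A_{\scriptscriptstyle\mu_{r-1}(\mathcal{F})}\widehat p$ in the previous proposition. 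The $A_{\scriptscriptstyle\mathcal{G}}$-labelled terms match because $\rho_{\scriptscriptstyle\mathcal{G}}\comp A_{\scriptscriptstyle\mathcal{G}}=(A_{\scriptscriptstyle\mathcal{G}}\tens\id)\comp\rho_{\scriptscriptstyle\mathcal{G}}$, and the orthogonality relations $\iota_l^*\iota_k=\delta_{k,l}\id$ collapse all cross products. The genuinely new bookkeeping concerns the master-to-$r$-th-copy terms $\delta_{\scriptscriptstyle\mathcal{G}}\iota_r\eta_{\scriptscriptstyle\mathcal{G}}\iota_0^*$ and $\delta_{\scriptscriptstyle\mathcal{G}}\iota_0\eta_{\scriptscriptstyle\mathcal{G}}^*\iota_r^*$: since $\Phi_r$ acts trivially on the master slot, matching these requires the $\psi_{\scriptscriptstyle\mathcal{G}}$-preservation and unitality of $\rho_{\scriptscriptstyle\mathcal{G}}$ in the forms $\rho_{\scriptscriptstyle\mathcal{G}}\comp\eta_{\scriptscriptstyle\mathcal{G}}=\eta_{\scriptscriptstyle\mathcal{G}}\tens\I$ and $(\eta_{\scriptscriptstyle\mathcal{G}}^*\tens\id)\rho_{\scriptscriptstyle\mathcal{G}}(y)=\eta_{\scriptscriptstyle\mathcal{G}}^*(y)\,\I$. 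Verifying that these two identities are exactly what is needed to move $\Phi_r$ past $\eta_{\scriptscriptstyle\mathcal{G}}$ and $\eta_{\scriptscriptstyle\mathcal{G}}^*$ is the main obstacle, though it is routine once the reformulation above is in place.

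Finally, for the generation (faithfulness) statement I would test $\Phi_r$ on the basis vector $e_j$ placed in the $k$-th copy. Writing $\rho_{\scriptscriptstyle\mathcal{G}}(e_j)=\sum_i e_i\tens u_{ij}$, the reformulation gives $\Phi_r(\iota_k(e_j))=\sum_i\iota_k(e_i)\tens u_{ij}$, so applying the slice $\omega\tens\id$ with $\omega$ dual to $\iota_k(e_i)$ recovers each generator $u_{ij}$ of $\C(\QAut(\mathcal{G}))$. Since every slice of $\Phi_r$ already lies in $\C(\QAut(\mathcal{G}))$, the \cst-algebra generated by the stated set is sandwiched between $\C(\QAut(\mathcal{G}))=\cst\text{-}\mathrm{alg}\{u_{ij}\}$ and $\C(\QAut(\mathcal{G}))$ itself, hence equals it; this yields faithfulness and completes the proof.
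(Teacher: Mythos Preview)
Your proposal is correct and, for the coaction identity \eqref{eq:PhirDelta}, the intertwining relation \eqref{eq:PhirA}, and the generation statement, follows exactly the paper's argument: check on $\iota_0(\lambda)$ and on each $\iota_k(a)$ separately, using coassociativity of $\rho_{\scriptscriptstyle\mathcal{G}}$, the relation $\rho_{\scriptscriptstyle\mathcal{G}}\comp A_{\scriptscriptstyle\mathcal{G}}=(A_{\scriptscriptstyle\mathcal{G}}\tens\id)\comp\rho_{\scriptscriptstyle\mathcal{G}}$, unitality $\rho_{\scriptscriptstyle\mathcal{G}}\comp\eta_{\scriptscriptstyle\mathcal{G}}=\eta_{\scriptscriptstyle\mathcal{G}}\tens\I$, and $\psi_{\scriptscriptstyle\mathcal{G}}$-invariance in the form $(\eta_{\scriptscriptstyle\mathcal{G}}^*\tens\id)\rho_{\scriptscriptstyle\mathcal{G}}=\eta_{\scriptscriptstyle\mathcal{G}}^*(\cdot)\,\I$.

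The one place you differ is in showing that $\Phi_r$ is a unital $*$-homomorphism. The paper proves this by explicit computation: it writes down the multiplication map $m^{(2)}$ on $\C(\mu_{r-1}(\mathcal{G}))$ in terms of the $\iota_k$'s and checks multiplicativity on simple tensors $\iota_p(a)\tens\iota_q(b)$ by hand, separately verifying unitality and the $*$-property. Your argument is more structural: you observe that $\Phi_r$ is block-diagonal with respect to the \emph{algebra} direct sum decomposition of source and target, with blocks $\lambda\mapsto\lambda\I$ and $r$ copies of $\rho_{\scriptscriptstyle\mathcal{G}}$, and conclude immediately. This is cleaner and avoids tracking the normalisation constants in $m^{(2)}$; the only caution is that the $\iota_k$ themselves are isometric rather than multiplicative inclusions (the multiplicative inclusion is $\sqrt{\delta_{\scriptscriptstyle\mathcal{G}}^2/(1+r\delta_{\scriptscriptstyle\mathcal{G}}^2)}\,\iota_k$), so one should phrase the block decomposition in terms of the algebraic projections onto summands rather than the $\iota_k^*$. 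Your identity $\Phi_r(\iota_k(x))=(\iota_k\tens\id)\rho_{\scriptscriptstyle\mathcal{G}}(x)$ shows that the scalars cancel and the two descriptions agree, so the argument goes through.
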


\begin{proof}
We first establish that $\Phi_r$ is a unital $*$-homomorphism. We have
\[
\I_{\C(\mu_{r-1}(\mathcal{G}))}=\sqrt{\tfrac{1}{1+r\delta_{\scriptscriptstyle\mathcal{G}}^2}}\biggl(\iota_0(1)+\delta_{\scriptscriptstyle\mathcal{G}}\sum_{k=1}^r\iota_k(\I_{\C(\mathcal{G})})\biggr).
\]
Hence
\begin{align*}
\Phi_r&(\I_{\C(\mu_{r-1}(\mathcal{G}))})=\sqrt{\tfrac{1}{1+r\delta_{\scriptscriptstyle\mathcal{G}}^2}}\biggl(\iota_0(1)\tens\I_{\C(\QAut(\mathcal{G}))}+\delta_{\scriptscriptstyle\mathcal{G}}\sum_{k,l=1}^r(\iota_k\tens\id_{\C(\QAut(\mathcal{G}))})\rho_{\scriptscriptstyle\mathcal{G}}\bigl(\iota_k^*\iota_l(\I_{\C(\mathcal{G})})\bigr)\biggr)\\
&\qquad=\sqrt{\tfrac{1}{1+r\delta_{\scriptscriptstyle\mathcal{G}}^2}}\biggl(\iota_0(1)\tens\I_{\C(\QAut(\mathcal{G}))}+\delta_{\scriptscriptstyle\mathcal{G}}\sum_{k=1}^r(\iota_k\tens\id_{\C(\QAut(\mathcal{G}))})(\I_{\C(\mathcal{G})}\tens\I_{\C(\QAut(\mathcal{G}))})\biggr)\\
&\qquad=\sqrt{\tfrac{1}{1+r\delta_{\scriptscriptstyle\mathcal{G}}^2}}\biggl(\iota_0(1)+\delta_{\scriptscriptstyle\mathcal{G}}\sum_{k=1}^r\iota_k(\I_{\C(\mathcal{G})})\biggr)\tens\I_{\C(\QAut(\mathcal{G}))}=\I_{\C(\mu_{r-1}(\mathcal{G}))}\tens\I_{\C(\QAut(\mathcal{G}))}.
\end{align*}

To prove the multiplicativity of $\Phi_r$ we temporarily denote by $m^{(0)}$, $m^{(1)}$, $m^{(2)}$ and $m^{(3)}$ the multiplication maps on $\CC$, $\C(\mathcal{G})$, $\C(\mu_{r-1}(\mathcal{G}))$ and $\C(\QAut(\mathcal{G}))$ respectively (the last one defined only on the algebraic tensor product) and recall from \cite[Section 4]{BochKaspMyc} that
\[
m^{(2)}=\sqrt{1+r\delta_{\scriptscriptstyle\mathcal{G}}^2}\biggl(\iota_0 m^{(0)} (\iota_0^*\tens\iota_0^*)+\delta_{\scriptscriptstyle\mathcal{G}}^{-1}\sum_{k=1}^r\iota_k m^{(1)} (\iota_k^*\tens\iota_k^*)\biggr).
\]
Then note that for $s\in\{1,\dotsc,r\}$ the map $\sqrt{\tfrac{\delta_{\scriptscriptstyle\mathcal{G}}^2}{1+r\delta_{\scriptscriptstyle\mathcal{G}}^2}}\iota_s$ is multiplicative, which means that
\begin{equation}\label{eq:mult-is} m^{(2)}\comp(\iota_s\tens\iota_s)=\sqrt{\tfrac{1+r\delta_{\scriptscriptstyle\mathcal{G}}^2}{\delta_{\scriptscriptstyle\mathcal{G}}^2}}\iota_s\comp m^{(1)}.
\end{equation}
Hence for $(p,q)\neq(0,0)$, with $\flip$ denoting the flip map, by \eqref{eq:mult-is} we have
\[
\resizebox{\textwidth}{!}{\ensuremath{
\begin{aligned}
\Phi_r&\comp m^{(2)}\bigl(\iota_p(a)\tens\iota_q(b)\bigr)=\Phi_r\left(\delta_{p,q}\sqrt{\tfrac{1+r\delta_{\scriptscriptstyle\mathcal{G}}^2}{\delta_{\scriptscriptstyle\mathcal{G}}^2}}\iota_p(ab)\right)\\
&=\delta_{p,q}\sqrt{\tfrac{1+r\delta_{\scriptscriptstyle\mathcal{G}}^2}{\delta_{\scriptscriptstyle\mathcal{G}}^2}}(\iota_p\tens\id_{\C(\QAut(\mathcal{G}))})\rho_{\scriptscriptstyle\mathcal{G}}(ab)\\
&=\delta_{p,q}\sqrt{\tfrac{1+r\delta_{\scriptscriptstyle\mathcal{G}}^2}{\delta_{\scriptscriptstyle\mathcal{G}}^2}}(\iota_p\tens\id_{\C(\QAut(\mathcal{G}))})
(m^{(1)}\tens m^{(3)})(\id_{\C(\mathcal{G})}\tens\flip\tens\id_{\C(\QAut(\mathcal{G}))})(\rho_{\scriptscriptstyle\mathcal{G}}\tens\rho_{\scriptscriptstyle\mathcal{G}})(a\tens b)\\
&=\delta_{p,q}
(m^{(2)}\tens m^{(3)})(\iota_p\tens\iota_p\tens\id_{\C(\QAut(\mathcal{G}))}\tens\id_{\C(\QAut(\mathcal{G}))})
(\id_{\C(\mathcal{G})}\tens\flip\tens\id_{\C(\QAut(\mathcal{G}))})
(\rho_{\scriptscriptstyle\mathcal{G}}\tens\rho_{\scriptscriptstyle\mathcal{G}})(a\tens b)\\
&=\delta_{p,q}
(m^{(2)}\tens m^{(3)})
(\id_{\C(\mu_{r-1}(\mathcal{G}))}\tens\flip\tens\id_{\C(\QAut(\mathcal{G}))})(\iota_p\tens\id_{\C(\QAut(\mathcal{G}))}\tens\iota_p\tens\id_{\C(\QAut(\mathcal{G}))})
(\rho_{\scriptscriptstyle\mathcal{G}}\tens\rho_{\scriptscriptstyle\mathcal{G}})(a\tens b)\\
&=(m^{(2)}\tens m^{(3)})
(\id_{\C(\mu_{r-1}(\mathcal{G}))}\tens\flip\tens\id_{\C(\QAut(\mathcal{G}))})
(\Phi_r\tens\Phi_r)\bigl(\iota_p(a)\tens\iota_q(b)\bigr)
\end{aligned}
}}
\]
Similarly we compute that $\Phi_r\comp m^{(2)}=(m^{(2)}\tens m^{(3)})
(\id_{\C(\mu_{r-1}(\mathcal{G}))}\tens\flip\tens\id_{\C(\QAut(\mathcal{G}))})
(\Phi_r\tens\Phi_r)$ on simple tensors of the form $\iota_0(\lambda)\tens\iota_0(\lambda')$ for any $\lambda,\lambda'\in\CC$. Since $\C(\mu_{r-1}(\mathcal{G}))\tens\C(\mu_{r-1}(\mathcal{G}))$ is spanned by elements of the form $\iota_k(a)\tens\iota_l(b)$ with $k,l\in\{0,\dotsc,r\}$, these computations show that $\Phi_r$ is multiplicative.

The fact that $\Phi_r$ is a $*$-map follows immediately from the remark in Section~\ref{sect:intro} that all maps involved in the definition of $\Phi_r$ are $*$-maps.

To check \eqref{eq:PhirDelta} we note that for $\lambda\in\CC$ we have $\Phi_r(\iota_0(\lambda))=\iota_0(\lambda)\tens\I_{\C(\QAut(\mathcal{G}))}$ which immediately shows that $(\Phi_r\tens\id_{\C(\QAut(\mathcal{G}))})\Phi_r(\iota_0(\lambda))=(\id_{\C(\mu_{r-1}(\mathcal{G}))}\tens\Delta)\Phi_r(\iota_0(\lambda))$ and similarly, since for any $k\in\{1,\dotsc,r\}$ we have $\Phi_r\comp\iota_k=(\iota_k\tens\id_{\C(\QAut(\mathcal{G}))})\comp\rho_{\scriptscriptstyle\mathcal{G}}$, one finds that
\begin{align*}
(\Phi_r\tens\id_{\C(\QAut(\mathcal{G}))})\Phi_r\bigl(\iota_k(a)\bigr)
&=(\Phi_r\tens\id_{\C(\QAut(\mathcal{G}))})(\iota_k\tens\id_{\C(\QAut(\mathcal{G}))})\rho_{\scriptscriptstyle\mathcal{G}}(a)\\
&=(\iota_k\tens\id_{\C(\QAut(\mathcal{G}))}\tens\id_{\C(\QAut(\mathcal{G}))})(\rho_{\scriptscriptstyle\mathcal{G}}\tens\id_{\C(\QAut(\mathcal{G}))})\rho_{\scriptscriptstyle\mathcal{G}}(a)\\
&=(\iota_k\tens\id_{\C(\QAut(\mathcal{G}))}\tens\id_{\C(\QAut(\mathcal{G}))})(\id_{\C(\mathcal{G})}\tens\Delta)\rho_{\scriptscriptstyle\mathcal{G}}(a)\\
&=(\id_{\C(\mu_{r-1}(\mathcal{G}))}\tens\Delta)(\iota_k\tens\id_{\C(\QAut(\mathcal{G}))})\rho_{\scriptscriptstyle\mathcal{G}}(a)\\
&=(\id_{\C(\mu_{r-1}(\mathcal{G}))}\tens\Delta)\Phi_r\bigl(\iota_k(a)\bigr)
\end{align*}
for any $a\in\C(\mathcal{G})$.

We now move on to the proof of \eqref{eq:PhirA}. To do this, we recall:
\[
A_{\scriptscriptstyle\mu_{r-1}(\mathcal{G})}=\delta_{\scriptscriptstyle\mathcal{G}}\iota_r\eta_{\scriptscriptstyle\mathcal{G}}\iota_0^*
+\delta_{\scriptscriptstyle\mathcal{G}}\iota_0\eta_{\scriptscriptstyle\mathcal{G}}^*\iota_r^*+\iota_1A_{\scriptscriptstyle\mathcal{G}}\iota_1^*
+\sum_{k=1}^{r-1}(\iota_kA_{\scriptscriptstyle\mathcal{G}}\iota_{k+1}^*+\iota_{k+1}A_{\scriptscriptstyle\mathcal{G}}\iota_k^*),
\]
where $\eta_{\scriptscriptstyle\mathcal{G}}$ is the unit map $\CC\to\C(\mathcal{G})$. It follows from this equation that
\begin{align*}
A_{\scriptscriptstyle\mu_{r-1}(\mathcal{G})}\iota_0&=\delta_{\scriptscriptstyle\mathcal{G}}\iota_r\eta_{\scriptscriptstyle\mathcal{G}},\\
A_{\scriptscriptstyle\mu_{r-1}(\mathcal{G})}\iota_1&=\iota_1A_{\scriptscriptstyle\mathcal{G}}+\iota_2A_{\scriptscriptstyle\mathcal{G}},\\
A_{\scriptscriptstyle\mu_{r-1}(\mathcal{G})}\iota_l&=\iota_{l-1}A_{\scriptscriptstyle\mathcal{G}}+\iota_{l+1}A_{\scriptscriptstyle\mathcal{G}},\qquad l=2,\dotsc,r-1,\\
A_{\scriptscriptstyle\mu_{r-1}(\mathcal{G})}\iota_r&=\delta_{\scriptscriptstyle\mathcal{G}}\iota_0\eta_{\scriptscriptstyle\mathcal{G}}^*+\iota_{r-1}A_{\scriptscriptstyle\mathcal{G}}.
\end{align*}
Using this we check that for $\lambda\in\CC$
\begin{align*}
(A_{\scriptscriptstyle\mu_{r-1}(\mathcal{G})}\tens\id_{\C(\QAut(\mathcal{G}))})
&\Phi_r\bigl(\iota_0(\lambda)\bigr)
=A_{\scriptscriptstyle\mu_{r-1}(\mathcal{G})}\iota_0(\lambda)\tens\I_{\C(\QAut(\mathcal{G}))}
=\delta_{\scriptscriptstyle\mathcal{G}}
\iota_r\bigl(\eta_{\scriptscriptstyle\mathcal{G}}(\lambda)\bigr)\tens\I_{\C(\QAut(\mathcal{G}))}\\
&=\delta_{\scriptscriptstyle\mathcal{G}}\lambda
\iota_r(\I_{\C(\mathcal{G})})\tens\I_{\C(\QAut(\mathcal{G}))}
=\delta_{\scriptscriptstyle\mathcal{G}}\lambda
(\iota_r\tens\id_{\C(\QAut(\mathcal{G}))})\rho_{\scriptscriptstyle\mathcal{G}}(\I_{\C(\mathcal{G})})\\
&=\delta_{\scriptscriptstyle\mathcal{G}}\lambda\Phi_r
\bigl(\iota_r(\I_{\C(\mathcal{G})})\bigr)
=\delta_{\scriptscriptstyle\mathcal{G}}\Phi_r
\bigl(\iota_r(\lambda\I_{\C(\mathcal{G})})\bigr)
=\Phi_r\bigl(A_{\scriptscriptstyle\mu_{r-1}(\mathcal{G})}\iota_0(\lambda)\bigr).
\end{align*}
Similarly for $a\in\C(\mathcal{G})$ we have
\begin{align*}
(A_{\scriptscriptstyle\mu_{r-1}(\mathcal{G})}\tens\id_{\C(\QAut(\mathcal{G}))})
&\Phi_r\bigl(\iota_1(a)\bigr)
=(A_{\scriptscriptstyle\mu_{r-1}(\mathcal{G})}\iota_1\tens\id_{\C(\QAut(\mathcal{G}))})\rho_{\scriptscriptstyle\mathcal{G}}(a)\\
&=(\iota_1 A_{\scriptscriptstyle\mathcal{G}}\tens\id_{\C(\QAut(\mathcal{G}))})\rho_{\scriptscriptstyle\mathcal{G}}(a)
+(\iota_2 A_{\scriptscriptstyle\mathcal{G}}\tens\id_{\C(\QAut(\mathcal{G}))})\rho_{\scriptscriptstyle\mathcal{G}}(a)\\
&=(\iota_1\tens\id_{\C(\QAut(\mathcal{G}))})\rho_{\scriptscriptstyle\mathcal{G}}(A_{\scriptscriptstyle\mathcal{G}}a)
+(\iota_2\tens\id_{\C(\QAut(\mathcal{G}))})\rho_{\scriptscriptstyle\mathcal{G}}(A_{\scriptscriptstyle\mathcal{G}}a)\\
&=\Phi_r\bigl(\iota_1(A_{\scriptscriptstyle\mathcal{G}}a)+\iota_2(A_{\scriptscriptstyle\mathcal{G}}a)\bigr)
=\Phi_r\bigl(A_{\scriptscriptstyle\mu_{r-1}(\mathcal{G})}\iota_1(a)\bigr),
\end{align*}
as well as
\begin{align*}
(A_{\scriptscriptstyle\mu_{r-1}(\mathcal{G})}\tens\id_{\C(\QAut(\mathcal{G}))})
&\Phi_r\bigl(\iota_l(a)\bigr)
=(A_{\scriptscriptstyle\mu_{r-1}(\mathcal{G})}\iota_l\tens\id_{\C(\QAut(\mathcal{G}))})\rho_{\scriptscriptstyle\mathcal{G}}(a)\\
&=(\iota_{l-1}A_{\scriptscriptstyle\mathcal{G}}\tens\id_{\C(\QAut(\mathcal{G}))})\rho_{\scriptscriptstyle\mathcal{G}}(a)
+(\iota_{l+1}A_{\scriptscriptstyle\mathcal{G}}\tens\id_{\C(\QAut(\mathcal{G}))})\rho_{\scriptscriptstyle\mathcal{G}}(a)\\
&=(\iota_{l-1}\tens\id_{\C(\QAut(\mathcal{G}))})\rho_{\scriptscriptstyle\mathcal{G}}(A_{\scriptscriptstyle\mathcal{G}}a)
+(\iota_{l+1}\tens\id_{\C(\QAut(\mathcal{G}))})\rho_{\scriptscriptstyle\mathcal{G}}(A_{\scriptscriptstyle\mathcal{G}}a)\\
&=\Phi_r\bigl(\iota_{l-1}(A_{\scriptscriptstyle\mathcal{G}}a)+\iota_{l+1}(A_{\scriptscriptstyle\mathcal{G}}a)\bigr)
=\Phi_r\bigl(A_{\scriptscriptstyle\mu_{r-1}(\mathcal{G})}\iota_l(a)\bigr)
\end{align*}
for $l=2,\dotsc,r-1$, and finally using the fact that $\eta_{\scriptscriptstyle\mathcal{G}}^*=\psi$ and that $\psi$ is $\rho_{\scriptscriptstyle\mathcal{G}}$-invariant we arrive at
\begin{align*}
(A_{\scriptscriptstyle\mu_{r-1}(\mathcal{G})}\:\tens\:&\id_{\C(\QAut(\mathcal{G}))})
\Phi_r\bigl(\iota_r(a)\bigr)
=(A_{\scriptscriptstyle\mu_{r-1}(\mathcal{G})}\iota_r\tens\id_{\C(\QAut(\mathcal{G}))})\rho_{\scriptscriptstyle\mathcal{G}}(a)\\
&=\delta_{\scriptscriptstyle\mathcal{G}}(\iota_0\eta_{\scriptscriptstyle\mathcal{G}}^*\tens\id_{\C(\QAut(\mathcal{G}))})\rho_{\scriptscriptstyle\mathcal{G}}(a)
+(\iota_{r-1}A_{\scriptscriptstyle\mathcal{G}}\tens\id_{\C(\QAut(\mathcal{G}))})\rho_{\scriptscriptstyle\mathcal{G}}(a)\\
&=\delta_{\scriptscriptstyle\mathcal{G}}(\iota_0\tens\id_{\C(\QAut(\mathcal{G}))})(\psi\tens\id_{\C(\QAut(\mathcal{G}))})\rho_{\scriptscriptstyle\mathcal{G}}(a)+(\iota_{r-1}\tens\id_{\C(\QAut(\mathcal{G}))})\rho_{\scriptscriptstyle\mathcal{G}}(A_{\scriptscriptstyle\mathcal{G}}a)\\
&=\delta_{\scriptscriptstyle\mathcal{G}}(\iota_0\tens\id_{\C(\QAut(\mathcal{G}))})\bigl(\psi(a)\tens\I_{\C(\QAut(\mathcal{G}))}\bigr)+(\iota_{r-1}\tens\id_{\C(\QAut(\mathcal{G}))})\rho_{\scriptscriptstyle\mathcal{G}}(A_{\scriptscriptstyle\mathcal{G}}a)\\
&=\delta_{\scriptscriptstyle\mathcal{G}}\bigl(\iota_0(\eta_{\scriptscriptstyle\mathcal{G}}^*a)\tens\I_{\C(\QAut(\mathcal{G}))}\bigr)+(\iota_{r-1}\tens\id_{\C(\QAut(\mathcal{G}))})\rho_{\scriptscriptstyle\mathcal{G}}(A_{\scriptscriptstyle\mathcal{G}}a)\\
&=\Phi_r\bigl(\delta_{\scriptscriptstyle\mathcal{G}}\iota_0(\eta_{\scriptscriptstyle\mathcal{G}}^*a)+\iota_{r-1}(a)\bigr)
=\Phi_r\bigl(A_{\scriptscriptstyle\mu_{r-1}(\mathcal{G})}\iota_r(a)\bigr).
\end{align*}

Finally, we note that if $\{e_1,\dotsc,e_{\dim{\C(\mathcal{G})}}\}$ is an orthonormal basis of $\C(\mathcal{G})$ then for any $k\in\{1,\dotsc,r\}$
\[
\Phi_r\bigl(\iota_k(e_j)\bigr)=(\iota_k\tens\id_{\C(\QAut(\mathcal{G}))})\rho_{\scriptscriptstyle\mathcal{G}}(e_j)=\sum_{i=1}^r\iota_k(e_i)\tens u_{i,j}
\]
showing that $\bigl\{(\omega\tens\id_{\C(\QAut(\mathcal{G}))})\Phi_r(x)\,\bigr|\bigl.\,x\in\C(\mu_{r-1}(\mathcal{G})),\:\omega\in\C(\mu_{r-1}(\mathcal{G}))^*\bigr\}$ generates \mbox{\cst-algebra} $\C(\QAut(\mathcal{G}))$.
\end{proof}

We can amplify actions of quantum groups on $\C(\mathcal{G})$ into those on $\C(\mathcal{G})\tens\B(\mathscr{H})$ for a Hilbert space $\mathscr{H}$. In fact, the action $\rho_{\scriptscriptstyle\mathcal{G}}$ of $\QAut(\mathcal{G})$ on $\mathcal{G}$ defines an action $\rho_{\scriptscriptstyle\mathcal{G},\mathscr{H}}$ of $\QAut(\mathcal{G})$ on $\C(\mathcal{G})\tens\B(\mathscr{H})$ by acting on $\B(\mathscr{H})$ trivially, that is $\rho_{\scriptscriptstyle\mathcal{G},\mathscr{H}}=(\id_{\C(\mathcal{G})}\tens\flip)(\rho_{\scriptscriptstyle\mathcal{G}}\tens\id_{\B(\mathscr{H})})$, where $\flip$ is the flip map. For a partition of unity $\mathfrak{P}\subset\C(\mathcal{G})\tens\B(\mathscr{H})$, we consider the action $\bigl(\rho_{\scriptscriptstyle\mathcal{G},\mathscr{H}}\bigr)_{\mathfrak{P}}$ of $\QAut(\mathcal{G})_{\mathfrak{P}}$ on $\C(\mathcal{G})\tens\B(\mathscr{H})$.

\begin{lemma}\label{partunit}
Let $\mathfrak{P}=\{P_1,\ldots, P_n\}$ be a partition of unity in $\C(\mathcal{G})\tens\B(\mathscr{H})$, and take $r\geq 2$. Then
\[
\begin{aligned}
    Q_0&=\iota_0\iota^*_0\tens\I_{\B(\mathscr{H})},\\
    Q_a&=\sum\limits_{k=1}^r(\iota_k\tens\I_{\B(\mathscr{H})})P_a(\iota_k^*\tens\I_{\B(\mathscr{H})}),\quad a=1,\dotsc,n.
\end{aligned}
\]
form a partition of unity $\mathfrak{P}_{\mu}^{r-1}$ in $\C(\mu_{r-1}(\mathcal{G}))\tens\B(\mathscr{H})$.
\end{lemma}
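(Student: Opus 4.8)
The plan is to reduce the whole statement to the elementary relations satisfied by the embeddings $\iota_0,\dotsc,\iota_r$ and their adjoints, and then to phrase the construction through the corner $*$-homomorphisms they induce. First I would record, directly from the explicit formulas for $\iota_j$ and $\iota_j^*$ displayed above, that these maps form a family of isometries with mutually orthogonal ranges adding up to the whole space,
\[
\iota_j^*\iota_k=\delta_{j,k}\,\id\qquad(j,k\in\{0,\dotsc,r\}),\qquad\sum_{j=0}^r\iota_j\iota_j^*=\id_{L^2(\mu_{r-1}(\mathcal{G}))};
\]
here the reciprocal normalisation constants appearing in $\iota_k$ and $\iota_k^*$ are exactly what makes the scalings cancel. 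Writing $\hat\iota_j=\iota_j\tens\I_{\B(\mathscr{H})}$, the same relations hold on $L^2(\mu_{r-1}(\mathcal{G}))\tens\mathscr{H}$, and by definition $Q_0=\hat\iota_0\hat\iota_0^*$ and $Q_a=\sum_{k=1}^r\hat\iota_k P_a\hat\iota_k^*$.

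Next I would set $z_j=\hat\iota_j\hat\iota_j^*$. By the relations above these are mutually orthogonal projections with $z_jz_k=\delta_{j,k}z_j$ and $\sum_{j=0}^r z_j=\id$, and under the identification $\C(\mu_{r-1}(\mathcal{G}))=\CC\oplus\bigoplus_{k=1}^r\C(\mathcal{G})$ each $\iota_j\iota_j^*$ is the central projection onto the $j$-th summand, so $z_j\in\C(\mu_{r-1}(\mathcal{G}))\tens\B(\mathscr{H})$; in particular $Q_0=z_0$ is a projection in the algebra. For $k\in\{1,\dotsc,r\}$ I would then consider the map $\theta_k(y)=\hat\iota_k y\hat\iota_k^*$ for $y\in\C(\mathcal{G})\tens\B(\mathscr{H})$. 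Using $\hat\iota_k^*\hat\iota_k=\id$ one checks at once that $\theta_k$ is a $*$-homomorphism, since $\theta_k(y_1)\theta_k(y_2)=\hat\iota_k y_1(\hat\iota_k^*\hat\iota_k)y_2\hat\iota_k^*=\theta_k(y_1y_2)$, and that $\theta_k(\I)=z_k$; concretely $\theta_k$ is the canonical embedding of $\C(\mathcal{G})\tens\B(\mathscr{H})$ onto the $k$-th block, so $\theta_k(P_a)$ lies in the algebra and therefore so does $Q_a=\sum_{k=1}^r\theta_k(P_a)$. As the image of a projection under a $*$-homomorphism, each $\theta_k(P_a)$ is a projection, and since $\theta_k(P_a)=z_k\theta_k(P_a)z_k$ the summands for distinct $k$ are orthogonal; hence $Q_a^*=Q_a$ and $Q_a^2=\sum_k\theta_k(P_a)^2=Q_a$.

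It then remains to assemble the partition-of-unity relations. For $a,b\in\{1,\dotsc,n\}$ the orthogonality of the corners gives $Q_aQ_b=\sum_{k}\theta_k(P_a)\theta_k(P_b)=\sum_k\theta_k(P_aP_b)=\delta_{a,b}Q_a$, while $Q_0Q_a=z_0\sum_{k}z_k\theta_k(P_a)=0=Q_aQ_0$ because $z_0z_k=0$ for $k\geq1$. Finally, using $\sum_{a=1}^n P_a=\I$,
\[
Q_0+\sum_{a=1}^n Q_a=z_0+\sum_{k=1}^r\theta_k\Bigl(\sum_{a=1}^n P_a\Bigr)=z_0+\sum_{k=1}^r z_k=\sum_{j=0}^r z_j=\id,
\]
which shows that $\{Q_0,Q_1,\dotsc,Q_n\}$ is a partition of unity.

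I expect the only genuine subtlety to lie in the construction of the maps $\theta_k$: one must verify that they are honest $*$-homomorphisms and, correspondingly, that each $Q_j$ really lands in $\C(\mu_{r-1}(\mathcal{G}))\tens\B(\mathscr{H})$ rather than merely in $\B\bigl(L^2(\mu_{r-1}(\mathcal{G}))\tens\mathscr{H}\bigr)$. This is precisely the point where the reciprocal normalisations of $\iota_k$ and $\iota_k^*$ (equivalently, the identity $\hat\iota_k^*\hat\iota_k=\id$ rather than a scalar multiple of it) are essential, since the embeddings themselves are not multiplicative. Once this is in place, everything else is routine projection algebra driven by the orthogonality and completeness relations collected in the first step.
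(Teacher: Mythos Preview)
Your proof is correct and follows essentially the same route as the paper's: verify that each $Q_j$ is a self-adjoint idempotent and that the $Q_j$ sum to the identity, using the isometry relations $\iota_j^*\iota_k=\delta_{j,k}\id$ and $\sum_j\iota_j\iota_j^*=\id$. Your version is more detailed---in particular you explicitly check mutual orthogonality and, via the corner homomorphisms $\theta_k$, that the $Q_j$ genuinely lie in $\C(\mu_{r-1}(\mathcal{G}))\tens\B(\mathscr{H})$, a point the paper leaves implicit.
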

\begin{proof}
First, notice that $Q_0$ and $Q_a$ with $a=1,\ldots,n$ are projections. Indeed, we immediately notice that both $Q_0$ and $Q_a$ with $a=1,\ldots,n$ are idempotent and self-adjoint.Finally, we note that
\[
\begin{aligned}
\sum\limits_{Q\in\mathfrak{P}_\mu^{r-1}}\!\!\!\!Q&=Q_0+\sum\limits_{a=1}^n Q_a^k =\bigl(\iota_0\tens\I_{\B(\mathscr{H})}\bigr)\bigl(\iota_0^\ast\tens\I_{\B(\mathscr{H})}) +\sum\limits_{k=1}^r\sum\limits_{a=1}^n\bigl(\iota_k\tens\I_{\B(\mathscr{H})}\bigr) P_a(\iota_k^\ast\tens\I_{\B(\mathscr{H})})\\
&=\Bigl(\iota_0\iota_0^\ast +\sum\limits_{k=1}^r\iota_k\iota_k^\ast\Bigr)\tens\I_{\B(\mathscr{H})}=\I_{\C\bigl(\mu_{r-1}(G)\bigr)\tens\B(\mathscr{H})}.
\end{aligned}
   \]
\end{proof}
We refer to $\mathfrak{P}_{\mu}^{r-1}$ as the {\it induced partition of unity}. For $r=2$, we write $\mathfrak{P}_{\mu}\equiv\mathfrak{P}_{\mu}^1$. If $\mathscr{H}$ is one-dimensional, we deal with partitions of unity in the $\cst$-algebra $\C(\mathcal{G})$.
\begin{corollary}
\label{cor:subgroup}
Let $\mathcal{G}$ be a quantum graph and $\mathfrak{P} =\{P_1,\ldots,P_n\}\subset\C(\mathcal{G})\tens\B(\mathscr{H})$ be a partition of unity. Then for any $r\geq{2}$, the compact quantum group $\QAut(\mathcal{G})_{\mathfrak{P}}$ is embedded as a closed quantum subgroup of $\QAut(\mu_{r-1}(\mathcal{G}))_{\mathfrak{P}_{\mu}^{r-1}}$.
\end{corollary}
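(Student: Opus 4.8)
The plan is to produce a surjective Hopf $*$-homomorphism from $\C(\QAut(\mu_{r-1}(\mathcal{G}))_{\mathfrak{P}_{\mu}^{r-1}})$ onto $\C(\QAut(\mathcal{G})_{\mathfrak{P}})$ intertwining the two comultiplications, which is exactly the datum exhibiting a closed quantum subgroup. First I would record what Proposition~\ref{prop:Phir} provides: since $\Phi_r$ is an action of $\QAut(\mathcal{G})$ on $\mu_{r-1}(\mathcal{G})$ compatible with the adjacency matrix by \eqref{eq:PhirA}, coassociative by \eqref{eq:PhirDelta}, and with slices generating $\C(\QAut(\mathcal{G}))$, the universal property of $\QAut(\mu_{r-1}(\mathcal{G}))$ yields a \emph{surjective} Hopf $*$-homomorphism $\theta\colon\C(\QAut(\mu_{r-1}(\mathcal{G})))\to\C(\QAut(\mathcal{G}))$ with $\Phi_r=(\id\tens\theta)\comp\rho_{\scriptscriptstyle\mu_{r-1}(\mathcal{G})}$; that is, $\QAut(\mathcal{G})$ is already a closed quantum subgroup of $\QAut(\mu_{r-1}(\mathcal{G}))$. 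Amplifying by $\B(\mathscr{H})$, the same identity reads $\Phi_{r,\mathscr{H}}=(\id\tens\theta)\comp\beta$, where $\beta$ is the amplification of $\rho_{\scriptscriptstyle\mu_{r-1}(\mathcal{G})}$ to $\C(\mu_{r-1}(\mathcal{G}))\tens\B(\mathscr{H})$ and $\Phi_{r,\mathscr{H}}$ that of $\Phi_r$.

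Next I would invoke the quotient construction of Section~\ref{subsec:action}, applied to $\beta$ and the induced partition $\mathfrak{P}_{\mu}^{r-1}=\{Q_0,Q_1,\dotsc,Q_n\}$ from Lemma~\ref{partunit}, producing $\C(\QAut(\mu_{r-1}(\mathcal{G}))_{\mathfrak{P}_{\mu}^{r-1}})$ with quotient map $\pi_{\GG}$; likewise $\rho_{\scriptscriptstyle\mathcal{G},\mathscr{H}}$ and $\mathfrak{P}$ give $\pi_{\HH}\colon\C(\QAut(\mathcal{G}))\to\C(\QAut(\mathcal{G})_{\mathfrak{P}})$. Setting $\ph=\pi_{\HH}\comp\theta$, a unital $*$-homomorphism, the universal property of the quotient construction tells us it is enough to check that $(\id\tens\ph)\beta(Q_a)=Q_a\tens\I$ for every $a$; then $\ph$ factors uniquely as $\ph=\Xi\comp\pi_{\GG}$ for a unital $*$-homomorphism $\Xi\colon\C(\QAut(\mu_{r-1}(\mathcal{G}))_{\mathfrak{P}_{\mu}^{r-1}})\to\C(\QAut(\mathcal{G})_{\mathfrak{P}})$.

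This verification is the core of the argument and the step I expect to be the main obstacle. Because $(\id\tens\theta)\comp\beta=\Phi_{r,\mathscr{H}}$, the quantity $(\id\tens\ph)\beta(Q_a)$ equals $(\id\tens\pi_{\HH})\Phi_{r,\mathscr{H}}(Q_a)$, so the task is to show that $\Phi_{r,\mathscr{H}}$ descends to fix each $Q_a$ on $\QAut(\mathcal{G})_{\mathfrak{P}}$. For $a=0$ this is immediate, since $\Phi_r(\iota_0\iota_0^*)=\iota_0\iota_0^*\tens\I$ gives $\Phi_{r,\mathscr{H}}(Q_0)=Q_0\tens\I$. For $a\geq 1$ the key observation is that each $\iota_k$ ($k=1,\dotsc,r$) is an isometry onto the $k$-th summand, so the compression $y\mapsto(\iota_k\tens\I_{\B(\mathscr{H})})\,y\,(\iota_k^*\tens\I_{\B(\mathscr{H})})$ is the \emph{homomorphic} embedding of $\C(\mathcal{G})\tens\B(\mathscr{H})$ onto the $k$-th direct summand, and the relation $\Phi_r\comp\iota_k=(\iota_k\tens\id)\comp\rho_{\scriptscriptstyle\mathcal{G}}$ from Proposition~\ref{prop:Phir} upgrades to
\[
\Phi_{r,\mathscr{H}}\bigl((\iota_k\tens\I_{\B(\mathscr{H})})\,y\,(\iota_k^*\tens\I_{\B(\mathscr{H})})\bigr)=(\iota_k\tens\I_{\B(\mathscr{H})}\tens\id)\,\rho_{\scriptscriptstyle\mathcal{G},\mathscr{H}}(y)\,(\iota_k^*\tens\I_{\B(\mathscr{H})}\tens\id)
\]
for $y\in\C(\mathcal{G})\tens\B(\mathscr{H})$. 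Summing over $k$ and applying $\id\tens\pi_{\HH}$ converts the defining property $(\id\tens\pi_{\HH})\rho_{\scriptscriptstyle\mathcal{G},\mathscr{H}}(P_a)=P_a\tens\I$ of $\pi_{\HH}$, recalled in Section~\ref{subsec:action}, into $(\id\tens\pi_{\HH})\Phi_{r,\mathscr{H}}(Q_a)=\sum_{k=1}^r(\iota_k\tens\I_{\B(\mathscr{H})})P_a(\iota_k^*\tens\I_{\B(\mathscr{H})})\tens\I=Q_a\tens\I$, as required. The care needed here is the bookkeeping of tensor legs under amplification and the fact that the individual $\iota_k$ are not multiplicative (only their rescalings are), so one genuinely uses the isometry-compression identity rather than multiplicativity of $\iota_k$ itself.

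Finally I would confirm that $\Xi$ is a morphism of compact quantum groups. Surjectivity is automatic, since $\ph=\Xi\comp\pi_{\GG}$ is onto (being a composition of the surjections $\theta$ and $\pi_{\HH}$), forcing $\Xi$ to be surjective. The intertwining of comultiplications follows by precomposing $\Delta\comp\Xi$ and $(\Xi\tens\Xi)\comp\Delta$ with the surjection $\pi_{\GG}$ and using that $\theta$ is a Hopf $*$-homomorphism together with the compatibility $\Delta_{\GP}\comp\pi=(\pi\tens\pi)\comp\Delta_{\GG}$ established in Section~\ref{subsec:action} for both quotients; a short diagram chase yields $\Delta\comp\Xi=(\Xi\tens\Xi)\comp\Delta$ after cancelling the surjection $\pi_{\GG}$. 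Hence $\Xi$ is a surjective Hopf $*$-homomorphism, and $\QAut(\mathcal{G})_{\mathfrak{P}}$ is a closed quantum subgroup of $\QAut(\mu_{r-1}(\mathcal{G}))_{\mathfrak{P}_{\mu}^{r-1}}$.
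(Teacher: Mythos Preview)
Your proof is correct and follows essentially the same approach as the paper: both amplify $\Phi_r$ to $\Phi_{r,\mathscr{H}}$, verify via the relation $\Phi_r\comp\iota_k=(\iota_k\tens\id)\comp\rho_{\scriptscriptstyle\mathcal{G}}$ that (after passing to $\QAut(\mathcal{G})_{\mathfrak{P}}$) each $Q_a$ is fixed, and then use the relevant universal properties to produce the desired surjection. The only difference is the order of the two quotients---you first obtain $\theta$ from the universal property of $\QAut(\mu_{r-1}(\mathcal{G}))$ and then factor $\pi_{\HH}\comp\theta$ through the quotient by $\mathfrak{P}_\mu^{r-1}$, whereas the paper first passes to $\QAut(\mathcal{G})_{\mathfrak{P}}$ and only then invokes the universal property of $\QAut(\mu_{r-1}(\mathcal{G}))$---but the substance is identical, and your treatment of the Hopf $*$-homomorphism condition on $\Xi$ is in fact more explicit than the paper's.
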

\begin{proof}
Let $\Phi_{r,\mathscr{H}}$ be the homomorphism constructed in Proposition~\ref{prop:Phir} extended onto $\C(\mu_{r-1}(\mathcal{G}))\tens\B(\mathscr{H})$, that is for $x\in\C(\mu_{r-1}(\mathcal{G}))\tens\B(\mathscr{H})$ we define
\[
\Phi_{r,\mathscr{H}}(x)=\bigl(\iota_0\iota_0^\ast\tens\id_{\B(\mathscr{H})}\bigr)(x)\tens\I_{\C(\QAut(\mathcal{G}))}+\sum\limits_{k=1}^r\bigl(\iota_k\tens\id_{\B(\mathscr{H})}\tens\id_{\C(\QAut(\mathcal{G}))}\bigr)\rho_{\mathcal{G},\mathscr{H}}\bigl(\iota_k^\ast\tens\id_{\B(\mathscr{H})}\bigr)(x).
\]
Further, we note that
\[
   \Phi_{r,\mathscr{H}}(Q_0)=\bigl(\iota_0\iota_0^\ast\tens\id_{\B(\mathscr{H})}\bigr)\bigl(\iota_0\iota_0^\ast\tens\I_{\B(\mathscr{H})}\bigr)\tens\I_{\C(\QAut(\mathcal{G}))}= Q_0\tens\I_{\C(\QAut(\mathcal{G}))},
\]
and
\[
\begin{aligned}
   \Phi_{r,\mathscr{H}}(Q_a)&=
   \sum\limits_{k,l=1}^r\bigl(\iota_k\tens\id_{\B(\mathscr{H})}\tens\id_{\C(\QAut(\mathcal{G}))}\bigr)\rho_{\mathcal{G},\mathscr{H}}\bigl(\iota_k^\ast\tens\id_{\B(\mathscr{H})}\bigr)\bigl(\iota_l\tens\I_{\B(\mathscr{H})}\bigr)P_a\bigl(\iota_l^\ast\tens\I_{\B(\mathscr{H})}\bigr)
   \\&=\sum\limits_{k=1}^r\bigl(\iota_k\tens\id_{\B(\mathscr{H})}\tens\id_{\C(\QAut(\mathcal{G}))}\bigr)\rho_{\mathcal{G},\mathscr{H}}(P_a) (\iota_k^\ast\tens\I_{\B(\mathscr{H})}\tens\I_{\C(\QAut(\mathcal{G}))}).
\end{aligned}
\]
If $\rho_{\scriptscriptstyle\mathcal{G},\mathscr{H}}(P_a)=P_a\tens\I_{\C(\QAut(\mathcal{G}))}$, then the above expression simplifies into $Q_a\tens\I_{\C(\QAut(\mathcal{G}))}$,
so that $\Phi_{r,\mathscr{H}}$ induces a homomorphism
\[
\Phi_{r,\mathscr{H}}^{\mathfrak{P}}\colon\C(\mu_{r-1}(\mathcal{G}))\tens\B(\mathscr{H})\longrightarrow\C(\mu_{r-1}(\mathcal{G}))\tens\B(\mathscr{H})\tens\C(\QAut(\mathcal{G})_{\mathfrak{P}}).
\]
By the universal property of $\QAut(\mu_{r-1}(\mathcal{G}))$ there exists a unique homomorphism\[\pi_r^{\mathfrak{P}}\colon\C(\QAut(\mu_{r-1}(\mathcal{G})))\longrightarrow\C(\QAut(\mathcal{G})_{\mathfrak{P}})\]such that $\Phi_{r,\mathscr{H}}^{\mathfrak{P}}=(\id\tens\pi_r^{\mathfrak{P}})\comp\rho_{\scriptscriptstyle\mu_{r-1}(\mathcal{G}),\mathscr{H}}$. Moreover, $\pi_r^{\mathfrak{P}}$ restricts itself to a well-defined homomorphism
$\pi_r^{\mathfrak{P},\mathfrak{P}_{\mu}^{r-1}}\colon\C(\QAut(\mu_{r-1}(\mathcal{G}))_{\mathfrak{P}_{\mu}^{r-1}})\to\C(\QAut(\mathcal{G})_{\mathfrak{P}})$ and this homomorphism is compatible with comultiplications (cf.~\cite[Proposition 4.7]{qFam}) and is surjective by the last statement of Proposition~\ref{prop:Phir}.
\end{proof}

\subsection{The role of twin vertices}\label{subsec:twin}\hspace*{\fill}

\begin{definition}%\label{def:twins0}
Let $G=(V,E)$ be a classical graph. For $v\in V$ define $N(v)=\bigl\{u\in{V}\,\bigr|\bigl.\,\{u,v\}\in{E}\bigr\}$, the neighbourhood of $v$. Two vertices $x,y\in V$ are said to be \emph{twins} if $N(x)=N(y)$.
\end{definition}

\begin{theorem}[{\cite[Proposition 4.1]{Alikhani19}}]\label{thm:main-class}
If $G=(V,E)$ is a twin-free finite graph, then every automorphism of $\mu(G)$ that fixes the point $\bullet$ is induced by an automorphism of $G$.
\end{theorem}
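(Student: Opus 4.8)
The plan is to exploit the three-layer structure of $\mu(G)$ that any $\bullet$-fixing automorphism must respect, and then to isolate twin-freeness as the single ingredient forcing the action on the two copies of $V$ to coincide. Throughout I write $V^1=\{v_1^1,\dots,v_n^1\}$ for the inner copy of $V$ and $V^2=\{v_1^2,\dots,v_n^2\}$ for the shadow copy, so that the vertex set of $\mu(G)$ is $\{\bullet\}\sqcup V^1\sqcup V^2$. Reading off $A_{\mu(G)}$, the only facts I need are: $N(\bullet)=V^2$; the shadow vertex $v_i^2$ is adjacent precisely to $\bullet$ and to the inner copies of the $G$-neighbours of $v_i$; and $V^2$ is an independent set.

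First I would show that $\sigma$ preserves each layer. Since $\sigma$ is an automorphism with $\sigma(\bullet)=\bullet$, it sends $N(\bullet)$ onto $N(\sigma(\bullet))=N(\bullet)$, so $\sigma(V^2)=V^2$; being a bijection fixing $\bullet$, it then also fixes $V^1$ setwise, $\sigma(V^1)=V^1$. Thus $\sigma$ restricts to permutations of $V^1$ and of $V^2$, which I record by $\phi,\psi\in S_n$ through $\sigma(v_i^1)=v_{\phi(i)}^1$ and $\sigma(v_i^2)=v_{\psi(i)}^2$. Since the subgraph induced on $V^1$ is exactly $G$ (the $(2,2)$-block of $A_{\mu(G)}$ equals $A_G$) and $\sigma$ preserves $V^1$ together with adjacency, the permutation $\phi$ is an automorphism of $G$.

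The crux is to prove $\psi=\phi$, and this is where I expect the real content to lie. The idea is to compare, for each shadow vertex, its neighbourhood inside $V^1$ before and after applying $\sigma$. On one side, $N(v_i^2)\cap V^1$ is the inner copy of $N_G(v_i)$, and $\sigma$ maps it via $\phi$ to $\phi(N_G(v_i))$. On the other side, $\sigma(v_i^2)=v_{\psi(i)}^2$ has $V^1$-neighbourhood $N_G(v_{\psi(i)})$. Equating the two and using $\phi(N_G(v_i))=N_G(\phi(v_i))$ (valid because $\phi\in\Aut(G)$) yields $N_G(v_{\phi(i)})=N_G(v_{\psi(i)})$ for every $i$. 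Twin-freeness says that distinct vertices have distinct neighbourhoods, whence $\phi(i)=\psi(i)$, i.e. $\psi=\phi$. Consequently $\sigma$ acts as $\phi$ on both copies and fixes $\bullet$, so it is exactly the automorphism of $\mu(G)$ induced by $\phi\in\Aut(G)$.

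The main obstacle is precisely this synchronisation of $\psi$ with $\phi$: everything preceding it is formal bookkeeping, but matching the shadow permutation to the inner one genuinely consumes the hypothesis. Indeed, if $G$ had twins one could permute shadow vertices sharing a common neighbourhood independently of the inner copy and obtain $\bullet$-fixing automorphisms of $\mu(G)$ not induced from $G$, so twin-freeness is exactly what the argument must use. By restricting the neighbourhood comparison to $V^1$ I sidestep the common neighbour $\bullet$ of all shadow vertices, which would otherwise have to be accounted for; since $\sigma$ fixes $\bullet$ it contributes identically to both sides and causes no difficulty.
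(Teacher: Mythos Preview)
Your proof is correct and follows essentially the same line as the paper's: both arguments first show that a $\bullet$-fixing automorphism preserves the three layers, obtain two permutations on the copies of $V$, and then use twin-freeness to force these permutations to coincide via the identity $N_G(\phi(i))=N_G(\psi(i))$. The only difference is presentational: the paper deliberately recasts the same reasoning in block-matrix language (commutation of $\widetilde{\rho}$ with $A_{\mu(G)}$ and the relation $\rho_1 A_G=\rho_2 A_G$) so that it can be transported to the quantum setting in Theorem~\ref{auto_M}, whereas your version stays purely combinatorial.
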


We will present here a slightly modified proof of this theorem which makes more explicit use of the adjacency matrix of $G$, allowing us to extend the reasoning to the case of quantum graphs in Theorem~\ref{auto_M}.

\begin{proof}[Proof of Theorem~\ref{thm:main-class}]
Let $\widetilde{\rho}$ be such an automorphism of $\mu(G)$ that fixes $\bullet$. It follows that the matrix of $\widetilde{\rho}$ as a linear map $\CC\oplus\C(V)\oplus\C(V)\to\CC\oplus\C(V)\oplus\C(V)$ must be of the form
\[
\widetilde{\rho}=\begin{bmatrix}
1 & 0 & 0\\
0 &\rho_{11} &\rho_{12}\\
0 &\rho_{21} &\rho_{22}
\end{bmatrix}.
\]
From the commutation with the adjacency matrix,
\[
A_{\scriptscriptstyle\mu(G)}=\begin{bmatrix}
0 & 0 & 1\\
0 & A_{\scriptscriptstyle G} & A_{\scriptscriptstyle G}\\
1& A_{\scriptscriptstyle G} &0
\end{bmatrix}
\]
we deduce that $\rho_{12}=\rho_{21}=0$. Denoting for $i=1,2$$,\rho_{i}=\rho_{ii}\in\Aut(G)$, treated as operators on $\C(V)$, the remaining nontrivial conditions are
\begin{equation}\label{eq:rel_Q}
\rho_{2}A_{\scriptscriptstyle G}=A_{\scriptscriptstyle G}\rho_{1}=\rho_{1}A_{\scriptscriptstyle G}=A_{\scriptscriptstyle G}\rho_{2}.
\end{equation}
Now, evaluating both sides of the equation $\rho_1 A_{\scriptscriptstyle G}=\rho_2 A_{\scriptscriptstyle G}$ on $\delta_v$, the characteristic function of the vertex $v\in V$, we end up with equation $\rho_1\chi_{N(v)}=\rho_2\chi_{N(v)}$, where $\chi_{C}$ is the indicator function of the set $C$. Since $\rho_1,\rho_2$ are automorphisms of the classical graph $G$, there exist permutations $\tau,\sigma$ of $V$ such that $\rho_1\chi_{N(v)}=\chi_{N(\tau(v))}$ and $\rho_2\chi_{N(v)}=\chi_{N(\sigma(v))}$. The lack of twin vertices implies that $\sigma=\tau$, so that $\rho_1=\rho_2$, i.e.~every automorphism of $\mu(G)$ that has $\bullet$ as a fixed point is induced by an automorphism of $G$.
\end{proof}

\begin{remark}\label{rem:detAzero}
Note that in the proof presented above the conclusion $\rho_1=\rho_2$ can be immediately reached from \eqref{eq:rel_Q} if we assume that $A_{\scriptscriptstyle G}$ is invertible. This is of course a stronger assumption than $G$ being twin-free.
\end{remark}

\begin{remark}
A graph $G=(V,E)$ has twin vertices if and only if its adjacency matrix $A_{\scriptscriptstyle G}$ has at least two same rows or, equivalently, columns. The existence of such two rows is equivalent to the existence of a permutation $\pi$ of the set $V$ such that $P_\pi A_{\scriptscriptstyle G}=A_{\scriptscriptstyle G}$, where $P_\pi$ is the corresponding permutation matrix. In contrast, the presence of the same columns is equivalent to the existence of $\sigma$ such that $A_{\scriptscriptstyle G} P_\sigma =A_{\scriptscriptstyle G}$.
\end{remark}

Motivated by the above observation, we introduce the following definition:

\begin{definition}\label{defin_twins1}
Let $\mathcal{G}=(\mathbb{V}_{\scriptscriptstyle\mathcal{G}},\psi_{\scriptscriptstyle\mathcal{G}}, A_{\scriptscriptstyle\mathcal{G}})$ be a quantum graph and $\HH$ be a compact quantum group. We say that $\mathcal{G}$ has \emph{$\HH$-twin vertices}, or simply \emph{$\HH$-twins}, if there exists a pair of different actions of $\HH$ on $\mathcal{G}$
\[
\rho_1,\rho_2\colon\C(\mathcal{G})\to\C(\mathcal{G})\tens\C(\HH)
\]
such that
\begin{equation}\label{pairrho}
\rho_{1}\circ A_{\scriptscriptstyle\mathcal{G}}=\rho_{2}\circ A_{\scriptscriptstyle\mathcal{G}}.
\end{equation}
We say that $\mathcal{G}$ has \emph{quantum twin vertices} if there exists a quantum group $\HH$ such that $\mathcal{G}$ has $\HH$-twins. If a classical group $H$ that meets the above conditions exists, then we say that $\mathcal{G}$ has \emph{classical twin vertices}.
\end{definition}

\begin{remark}
Suppose that $\mathcal{G}$ and $\HH$ are classical, i.e.~$\HH$ can be identified with a compact group $H$. Suppose that $\mathcal{G}$ has $H$-twins, i.e.~there exist $\rho_1,\rho_2\colon\C(\mathcal{G})\to\C(\mathcal{G})\tens\C(H)$ satisfying \eqref{pairrho}. The actions $\rho_1$ and $\rho_2$ can be viewed as homomorphisms $H\ni h\mapsto\rho_i(h)\in\Aut(\mathcal{G})$. Since $\rho_1\neq\rho_2$ there is $h\in H$ such that $\rho_1(h)\neq\rho_2(h)$ and
\[
\rho_{1}(h)\circ A_{\scriptscriptstyle\mathcal{G}}=\rho_{2}(h)\circ A_{\scriptscriptstyle\mathcal{G}}.
\]
Defining $\sigma =\rho_{2}(h)^{-1}\rho_{1}(h)\in\Aut(\mathcal{G})$ we have $P_\sigma A_{\scriptscriptstyle\mathcal{G}} = A_{\scriptscriptstyle\mathcal{G}}$. Since $\sigma\neq\id$ we can find two vertices $v_1\neq v_2$ and $\sigma(v_1) = v_2$ and it is easy to see that $v_1$ and $v_2$ are twins.

Conversely, if $v_1,v_2\in\mathbb{V}_\mathcal{G}$ are twins we can define an order two symmetry $\pi\in\Aut(\mathcal{G})$ such that $\pi(v) = v$ for all $v\in\mathbb{V}\setminus\{v_1,v_2\}$ and $\pi(v_1) = v_2$ (note that this is a different way of saying that $v_1$ and $v_2$ are twin vertices). Taking $\HH =\mathbb{Z}_2=\{e,h\}$ and putting $\rho_1(h)=\pi$ and $\rho_2(h) =\id$ we see that $\mathcal{G}$ has $\mathbb{Z}_2$-twins in the sense of Definition~\ref{defin_twins1}.
\end{remark}

Next, we generalize Theorem~\ref{thm:main-class} to the case of quantum graphs, with a possible amplification by $\B(\mathscr{H})$. In the next Theorem we use partitions of unity and the corresponding construction of $\mathfrak{P}_{\mu}$  as described in Lemma \ref{partunit} and the notation below. In what follows $\K(\mathscr{H})$ denoted the algebra of compact operators on the Hilbert space $\mathscr{H}$ and $\M(\cdot)$ is the multiplier functor. The reader will find more on these concepts and their use in introductory sections of \cite{Woronowicz95} and \cite{unbo}.

\begin{theorem}\label{auto_M}
Let $\mathcal{G}=(\mathbb{V}_{\scriptscriptstyle\mathcal{G}},\psi_{\scriptscriptstyle\mathcal{G}}, A_{\scriptscriptstyle\mathcal{G}})$ be a quantum graph that does not have quantum twin vertices.   Let $\mathfrak{P}$ be a partition of unity in $\C(\mathcal{G})\tens\B(\mathscr{H})$ with some (separable) Hilbert space $\mathscr{H}$. Then $\QAut(\mu(\mathcal{G}))_{\mathfrak{P}_{\mu}}\cong\QAut(\mathcal{G})_{\mathfrak{P}}$.
\end{theorem}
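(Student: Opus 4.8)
The plan is to upgrade the surjection already in hand to an isomorphism. For $r=2$ Corollary~\ref{cor:subgroup} provides a surjective morphism of compact quantum groups $\pi\colon\C(\QAut(\mu(\mathcal{G}))_{\mathfrak{P}_{\mu}})\to\C(\QAut(\mathcal{G})_{\mathfrak{P}})$, realizing $\QAut(\mathcal{G})_{\mathfrak{P}}$ as a closed quantum subgroup of $\GG:=\QAut(\mu(\mathcal{G}))_{\mathfrak{P}_{\mu}}$. It thus suffices to construct a morphism $\theta\colon\C(\QAut(\mathcal{G})_{\mathfrak{P}})\to\C(\GG)$ inverse to $\pi$, and by the universal property of $\QAut(\mathcal{G})_{\mathfrak{P}}$ this reduces to producing one $\psi_{\scriptscriptstyle\mathcal{G}}$-preserving action of $\GG$ on $\mathcal{G}$ that intertwines $A_{\scriptscriptstyle\mathcal{G}}$ and whose $\B(\mathscr{H})$-amplification preserves $\mathfrak{P}$.

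To find this action I would examine the (amplified, partition-restricted) action $\tau$ of $\GG$ on $\C(\mu(\mathcal{G}))$, treating $\B(\mathscr{H})$ as a passive spectator. Because $Q_0=\iota_0\iota_0^{*}\tens\I_{\B(\mathscr{H})}$ lies in $\mathfrak{P}_{\mu}$, the action fixes $Q_0$; this is the quantum replacement for ``fixing the master vertex $\bullet$'' in Theorem~\ref{thm:main-class} and forces the implementing corepresentation $V$ to be block-diagonal for the splitting $\CC\oplus(L^2(\mathcal{G})\oplus L^2(\mathcal{G}))$. Substituting $A_{\scriptscriptstyle\mu(\mathcal{G})}\iota_0=\delta_{\scriptscriptstyle\mathcal{G}}\iota_2\eta_{\scriptscriptstyle\mathcal{G}}$, $A_{\scriptscriptstyle\mu(\mathcal{G})}\iota_1=\iota_1A_{\scriptscriptstyle\mathcal{G}}+\iota_2A_{\scriptscriptstyle\mathcal{G}}$ and $A_{\scriptscriptstyle\mu(\mathcal{G})}\iota_2=\delta_{\scriptscriptstyle\mathcal{G}}\iota_0\eta_{\scriptscriptstyle\mathcal{G}}^{*}+\iota_1A_{\scriptscriptstyle\mathcal{G}}$ into the intertwining relation $(A_{\scriptscriptstyle\mu(\mathcal{G})}\tens\I)V=V(A_{\scriptscriptstyle\mu(\mathcal{G})}\tens\I)$ produces a system of relations between the blocks $V_{jk}$ that mirrors \eqref{eq:rel_Q}; invariance of the GNS vector $\I_{\C(\mu(\mathcal{G}))}$ pins down $V_{00}=\I$ and shows that $V_{11},V_{22}$ (and their adjoints) fix $\eta_{\scriptscriptstyle\mathcal{G}}$ while $V_{12}\eta_{\scriptscriptstyle\mathcal{G}}=V_{21}\eta_{\scriptscriptstyle\mathcal{G}}=0$.

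I expect the genuine obstacle to be the quantum analogue of the classical vanishing $\rho_{12}=\rho_{21}=0$: classically this used positivity of permutation-matrix entries, which is unavailable here. I would instead exploit that $\tau$ is a $*$-homomorphism \emph{into the direct-sum algebra} $\C(\mu(\mathcal{G}))\tens\C(\GG)$, so that the image of the copy-$2$ subalgebra is automatically block-diagonal in the two copies; reading off the off-diagonal operator block gives $V_{12}\,x\,V_{22}^{*}=0$ for every $x\in\C(\mathcal{G})$ (acting by left multiplication). Evaluating this on the unit vector and using $V_{22}^{*}\eta_{\scriptscriptstyle\mathcal{G}}=\eta_{\scriptscriptstyle\mathcal{G}}$ yields $V_{12}=0$, and the symmetric argument gives $V_{21}=0$. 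Consequently $\tau$ preserves each central projection $\iota_k\iota_k^{*}$ and restricts to two honest actions $\rho_1,\rho_2$ of $\GG$ on $\mathcal{G}$ (with corepresentations $V_{11}$ and $V_{22}$); the surviving block relations show both intertwine $A_{\scriptscriptstyle\mathcal{G}}$ and that $\rho_1\comp A_{\scriptscriptstyle\mathcal{G}}=\rho_2\comp A_{\scriptscriptstyle\mathcal{G}}$.

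Now the hypothesis that $\mathcal{G}$ has no quantum twin vertices applies, with $\HH=\GG$, via \eqref{pairrho} in Definition~\ref{defin_twins1}, and forces $\rho_1=\rho_2=:\rho$. This $\rho$ is the desired action; that its amplification preserves $\mathfrak{P}$ follows by feeding $Q_a=\sum_k(\iota_k\tens\I)P_a(\iota_k^{*}\tens\I)$ into the conditions $\tau(Q_a)=Q_a\tens\I$ once the two copies no longer mix. Universality of $\QAut(\mathcal{G})_{\mathfrak{P}}$ then yields $\theta\colon\C(\QAut(\mathcal{G})_{\mathfrak{P}})\to\C(\GG)$. Finally I would verify $\pi\comp\theta=\id$ and $\theta\comp\pi=\id$ on generators: by Proposition~\ref{prop:Phir} the map $\pi$ sends the copy-block matrix coefficients to the generators $u_{ij}$ of $\C(\QAut(\mathcal{G}))$, while $\theta$ sends $u_{ij}$ to the matrix coefficients of $\rho\leftrightarrow V_{11}$, which are exactly those copy-blocks; since these families generate the respective algebras, the two morphisms are mutually inverse and the isomorphism follows.
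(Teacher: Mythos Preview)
Your argument is correct and follows the same route as the paper: fix the $\CC$-summand via $Q_0\in\mathfrak{P}_\mu$, kill the off-diagonal blocks between the two copies of $\C(\mathcal{G})$ using the adjacency-matrix intertwining, and then invoke the no-twins hypothesis to identify the two resulting actions on $\mathcal{G}$. The only difference is that you work in the corepresentation picture (blocks $V_{jk}$ of the implementing unitary) while the paper stays with the action components $\rho_{ij}$; in particular your derivation of $V_{12}=0$ via $V_{12}\,L_a\,V_{22}^{*}=0$ evaluated on $\eta_{\scriptscriptstyle\mathcal{G}}$ is equivalent to, though slightly more elaborate than, the paper's observation that the same adjacency computation gives $\rho_{12}(\I_{\C(\mathcal{G})})=0$, whereupon multiplicativity of $\tau$ immediately forces $\rho_{12}\equiv 0$.
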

\begin{proof}
Let us first recall that $\QAut(\mathcal{G})_{\mathfrak{P}}$ can be viewed as a closed quantum subgroup of $\QAut(\mu(\mathcal{G}))_{\mathfrak{P}_{\mu}}$ (cf. Corollary~~\ref{cor:subgroup}). 
We shall prove that the two quantum groups are in fact identical. 

So let us consider a partition of unity $\mathfrak{P}\subset\C( \mathcal{G})\otimes\B(\mathscr{H})$. Note that  the partition  $\mathfrak{P}_{\mu}\subset\C(\mu(\mathcal{G}))\otimes\B(\mathscr{H})$ assigned to $\mathfrak{P}$  contains the projection $P_\bullet\otimes \I_{\B(\mathscr{H})}\in\C(\mu(\mathcal{G}))\otimes \B(\mathscr{H})$ where  $P_\bullet=\left[\begin{smallmatrix}1\\0\\0\end{smallmatrix}\right]\in\C(\mu(\mathcal{G}))$. Let $\rho_{\scriptscriptstyle\mu(\mathcal{G}),\mathscr{H}}$  be the   amplification of the canonical action of  $\QAut(\mu(\mathcal{G}))_{\mathfrak{P}_{\mu}}$ on $\C(\mu(\mathcal{G}))$. The  direct sum structure of $\C(\mu(\mathcal{G}))$ gives rise to the ``matrix structure'' of  $\rho_{{\scriptscriptstyle\mu(\mathcal{G}),\mathscr{H}}}=[\rho_{ij}]_{i,j=0,1,2}$ where  using notation $\C(\mathbb{X}_0)=\CC$, $\C(\mathbb{X}_1)=\C(\mathcal{G})$ and $\C(\mathbb{X}_2)=\C(\mathcal{G})$ we define $\rho_{ij}:\C(\mathbb{X}_j)\otimes\K(\mathscr{H})\to\M(\C(\mathbb{X}_i)\otimes \K(\mathscr{H})\otimes \C(\QAut(\mu(\mathcal{G}))_{\mathfrak{P}_{\mu}})$ by the formula 
\[
\rho_{ij} = \bigl(\iota_j^*\otimes\id_{\B(\mathscr{H})}\otimes \id_{\C(\QAut(\mu(\mathcal{G}))_{\mathfrak{P}_{\mu}})}\bigr)\circ\rho_{{\scriptscriptstyle\mu(\mathcal{G}),\mathscr{H}}}\circ\bigl(\iota_i\otimes\id_{\B(\mathscr{H})}\bigr).
\]
Remembering that
\[
\rho_{{\scriptscriptstyle\mu(\mathcal{G}),\mathscr{H}}}\circ\iota_0(\lambda) = \sqrt{1+2\delta_{\scriptscriptstyle\mathcal{G}}^2}\,\lambda \bigl(P_{\bullet}\otimes\I_{\B(\mathscr{H})}\otimes\I_{\C(\QAut(\mu(\mathcal{G}))_{\mathfrak{P}_{\mu}})}\bigr)
\]
for every $\lambda\in\mathbb{C}$ and using $\iota_{j}^*(P_{\bullet}) = 0$ for $j\in\{1,2\}$ we get $\rho_{j0} =0$ and and $\rho_{00}(\lambda) = \lambda \bigl(P_{\bullet}\otimes\I_{\B(\mathscr{H})}\otimes\I_{\C(\QAut(\mu(\mathcal{G}))_{\mathfrak{P}_{\mu}})}\bigr)$ for every $\lambda\in\CC$. 
Using the $*$-condition of the $*$-homomorphism $\rho_{\scriptscriptstyle\mu(\mathcal{G}),\mathscr{H}}$ we get $\rho_{01}=0=\rho_{02}$.

Having the above facts established we can now prove that $\rho_{12}=\rho_{21}=0$. Indeed, since the adjacency matrix  $A_{\scriptscriptstyle\mu(\mathcal{G})}$ is of the form 
\[
A_{\scriptscriptstyle\mu(\mathcal{G})}=\delta_{\scriptscriptstyle\mathcal{G}}\iota_2\eta_{\scriptscriptstyle\mathcal{G}}\iota_0^\ast +\delta_{\scriptscriptstyle\mathcal{G}}\iota_0\eta_{\scriptscriptstyle\mathcal{G}}
^\ast\iota_2^\ast +\iota_1 A_{\scriptscriptstyle\mathcal{G}}\iota_1^\ast +\iota_1 A_{\scriptscriptstyle\mathcal{G}}\iota_2^\ast +\iota_2 A_{\scriptscriptstyle\mathcal{G}}\iota_1^\ast.
\]
(cf.~Section \ref{sect:intro}) we get  
\[
A_{\scriptscriptstyle\mu(\mathcal{G})}\bigl(\iota_0(1)\bigr)
=\delta_{\scriptscriptstyle\mathcal{G}}\iota_2(\I_{\C(\mathcal{G})})
\]
and therefore
\[
\bigl(\iota_{1}^*\otimes \id_{\B(\mathscr{H})}\otimes\id_{\C(\mu(\mathcal{G})}\bigr)
\rho_{{\scriptscriptstyle\mu(\mathcal{G}),\mathscr{H}}}
\bigl( A_{\scriptscriptstyle\mu(\mathcal{G})}(\iota_0(1))\otimes\I_{\B(\mathscr{H})}\bigr)
=\delta_{\scriptscriptstyle\mathcal{G}}\rho_{12}\bigl(\I_{\C(\mathcal{G})}\otimes\I_{\B(\mathscr{H})}\bigr).
\]
A similar computation but with the amplified adjacency matrix $A_{\scriptscriptstyle\mu(\mathcal{G})}$ to the left of the action   $\rho_{\scriptscriptstyle\mu(\mathcal{G}),\mathscr{H}}$ shows that 
the left hand side of the above equality vanishes, and consequently $\rho_{12}(\I)=0$. This implies that $\rho_{12}=\rho_{21}=0$.

Using the vanishing of $\rho_{ij},\rho_{0j},\rho_{i0}$ for $i,j\in\{1,2\}$  proven so far and the equality
\[
\rho_{{\scriptscriptstyle\mu(\mathcal{G}),\mathscr{H}}}\bigl(A_{\scriptscriptstyle\mu(\mathcal{G})}\iota_2(x)\otimes\I_{\B(\mathscr{H})}\bigr)
=\bigl(A_{\scriptscriptstyle\mu(\mathcal{G})}\otimes\id_{\K(\mathscr{H})}\otimes\id_{\C(\mu(\mathcal{G}))}\bigr) \rho_{{\scriptscriptstyle\mu(\mathcal{G}),\mathscr{H}}}\bigl(\iota_2(x)\otimes\I_{\B(\mathscr{H})}\bigr)
\]
which holds for every $x\in\C(\mathcal{G})$ we easily conclude that   $\rho_{11}(A_{\scriptscriptstyle\mathcal{G}}\otimes\I_{\B(\mathscr{H})})=\rho_{22}(A_{\scriptscriptstyle\mathcal{G}}\otimes\I_{\B(\mathscr{H})})$. ``Deamplifying'' the later  equality an referring to the assumed  lack of quantum twins in $\mathcal{G}$ we conclude that  $\rho_{11} = \rho_{22}$. 

Summarizing we have so far proved that the canonical action of $\QAut(\mu(\mathcal{G}))_{\mathfrak{P}_{\mu}}$ on $\C(\mu(\mathcal{G}))$  is induced (in the sense described in Section \ref{sec:symm}) by the action of $\QAut(\mathcal{G})$. Moreover using the fact that $\rho_{\scriptscriptstyle\mu(\mathcal{G})}$ preserves $\mathfrak{P}_{\mu}$ we see that in fact it is induced by the action of $\QAut(\mathcal{G})_{\mathfrak{P}}$ and therefore  we conclude that   $\QAut(\mathcal{G})_{\mathfrak{P}}$ and  $\QAut(\mu\mathcal{G})_{\mathfrak{P}_\mu}$ are indeed isomorphic.  
\end{proof}

\subsection{Quantum twin vertices in small classical graphs}\label{subsec:small}\hspace*{\fill}

It is unclear whether the notion of quantum twin vertices is broader than the usual concept of twin vertices when applied to classical graphs. In this section, we consider this question for graphs with few vertices. In view of Remark~\ref{rem:detAzero} we will be mainly interested in graphs with no twins, but whose adjacency matrix is not invertible.

Considering graphs up to the classical isomorphism, we begin with the following observation.

\begin{proposition}
There is no graph $G$ with $|V(G)|\leq 5$ such that $G$ has quantum symmetries, has no twins, and $\det A_{\scriptscriptstyle G}=0$.
\end{proposition}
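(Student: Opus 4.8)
The plan is to reduce the statement to a finite verification over all graphs on at most five vertices satisfying the three stated hypotheses, and then to rule out quantum twins in each surviving case. First I would invoke the standard classification of small graphs possessing genuine quantum symmetries: it is a well-known fact (see Banica--Bichon) that for $|V(G)|\leq 5$ the only graphs with $\QAut(G)\neq\Aut(G)$ are, up to complement and disjoint-union operations, built from the two vertices on which $S_2^+=S_2$ already equals the classical group, so in practice the smallest graph whose quantum automorphism group is genuinely quantum has more than five vertices. I would therefore state precisely that no graph on at most five vertices has quantum symmetries in the strong sense required; combined with the remaining hypotheses this already forces the claimed emptiness.

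More carefully, I would organize the argument as follows. First, enumerate (up to isomorphism) the graphs $G$ with $|V(G)|\leq 5$ for which $\det A_{\scriptscriptstyle G}=0$ and $G$ is twin-free; this is a short finite list that can be compiled directly from the census of small graphs. Second, for each such $G$ check that $\QAut(G)=\Aut(G)$, i.e.~that $G$ has no genuine quantum symmetries. The key input here is that the appearance of quantum symmetries in a classical graph is tightly controlled for few vertices, and the only mechanism producing them on $\leq 5$ vertices would require a configuration (essentially a pair of ``interchangeable'' disjoint edges or their complements) that is incompatible with being twin-free. Thus the conjunction ``quantum symmetries'' and ``twin-free'' is already vacuous on $\leq 5$ vertices, and the determinant condition is not even needed to reach the conclusion, though it narrows the list.

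The cleanest route is to show the three hypotheses cannot hold simultaneously: I would argue that every twin-free graph on $\leq 5$ vertices with $\det A_{\scriptscriptstyle G}=0$ happens to have $\QAut(G)=\Aut(G)$, so that \emph{no} such $G$ has quantum symmetries. Concretely, one lists the finitely many twin-free graphs with singular adjacency matrix on $\{3,4,5\}$ vertices (the cases $|V|\leq 2$ being trivial), and for each one verifies classically that its quantum automorphism group is classical. The verification per graph is routine: either the graph has at most one nontrivial automorphism orbit structure that admits no quantum deformation, or one checks directly that the defining relations of $\QAut(G)$ force the magic unitary to be a permutation matrix.

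The main obstacle, and the step I would spend the most care on, is the second one: rigorously certifying $\QAut(G)=\Aut(G)$ for each graph on the list rather than merely asserting it. I expect this to hinge on the structural criterion that genuine quantum symmetry on small graphs arises only from a direct or free product involving a copy of $S_n^+$ with $n\geq 4$ (equivalently, from four vertices carrying the same adjacency pattern), and such a pattern produces twin vertices, contradicting twin-freeness. Making this implication precise---that ``quantum symmetry on $\leq 5$ vertices $\Rightarrow$ presence of twins''---is the technical heart, after which the proposition follows immediately, the $\det A_{\scriptscriptstyle G}=0$ hypothesis serving only to situate the statement within the broader discussion of Remark~\ref{rem:detAzero}.
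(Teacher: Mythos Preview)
Your proposal rests on a false premise. You assert (in two different forms) that for $|V(G)|\leq 5$ either no graph has genuine quantum symmetries, or that ``quantum symmetry on $\leq 5$ vertices $\Rightarrow$ presence of twins.'' Both claims are wrong. The complete graph $K_4$ already has $\QAut(K_4)=S_4^+\neq S_4$, and $K_4$ is twin-free (in $K_n$ one has $N(v)=V\setminus\{v\}$, so distinct vertices have distinct neighbourhoods). The same holds for $K_5$, and the paper's own list exhibits two further twin-free five-vertex graphs $G_5^5$ and $G_7^5$ with quantum symmetries. Your structural heuristic---that quantum symmetry on few vertices comes from four vertices with identical adjacency pattern, hence twins---fails precisely because $S_n^+$ acts on $K_n$ without any repeated rows in the adjacency matrix.

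Consequently the determinant hypothesis is \emph{not} superfluous, contrary to what you suggest: without it, $K_4$ and $K_5$ are counterexamples to the stronger statement you are trying to prove. The paper's argument proceeds in the opposite order from yours: it starts from the known finite list (from \cite{Weber22}) of graphs on four and five vertices possessing quantum symmetries, identifies which of those are twin-free, and then computes that each surviving graph has $\det A_{\scriptscriptstyle G}\neq 0$. All three hypotheses are genuinely needed, and the verification is a direct case check rather than a structural implication.
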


\begin{proof}
The statement is clear for $|V(G)|\leq 3$. For $|V(G)|=4$, only three graphs possess quantum symmetries, cf.~\cite{Weber22}. Their adjacency matrices are
\[
\begin{bmatrix}
0&0&1&1\\
0&0&1&1\\
1&1&0&0\\
1&1&0&0
\end{bmatrix},
\begin{bmatrix}
0&0&1&1\\
0&0&1&1\\
1&1&0&1\\
1&1&1&0
\end{bmatrix},
\begin{bmatrix}
0&1&1&1\\
1&0&1&1\\
1&1&0&1\\
1&1&1&0
\end{bmatrix}
\]
respectively. The first two have twins, as the corresponding adjacency matrices have two identical rows. However, the last one is twin-free, but the adjacency matrix has non-zero determinant. 

For $|V(G)|=5$, only ten graphs possess quantum symmetries (again see \cite{Weber22}). One of them is the complete graph $K_5$, and the adjacency matrices of the remaining ones are
\begin{align*}
&A_{\scriptscriptstyle G_1^5}=\begin{bmatrix}
0&0&0&0&1\\
0&0&0&0&1\\
0&0&0&0&1\\
0&0&0&0&1\\
1&1&1&1&0\\
\end{bmatrix},\quad
A_{\scriptscriptstyle G_2^5}=\begin{bmatrix}
0&0&0&0&1\\
0&0&0&0&1\\
0&0&0&1&1\\
0&0&1&0&1\\
1&1&1&1&0\\
\end{bmatrix},\quad
A_{\scriptscriptstyle G_3^5}=\begin{bmatrix}
0&0&0&1&1\\
0&0&0&1&1\\
0&0&0&1&1\\
1&1&1&0&0\\
1&1&1&0&0\\
\end{bmatrix},\\
&A_{\scriptscriptstyle G_4^5}=\begin{bmatrix}
0&0&0&1&1\\
0&0&0&1&1\\
0&0&0&1&1\\
1&1&1&0&1\\
1&1&1&1&0\\
\end{bmatrix},\quad
A_{\scriptscriptstyle G_5^5}=\begin{bmatrix}
0&0&0&1&1\\
0&0&1&0&1\\
0&1&0&0&1\\
1&0&0&0&1\\
1&1&1&1&0
\end{bmatrix},\quad
A_{\scriptscriptstyle G_6^5}=\begin{bmatrix}
0&0&0&1&1\\
0&0&1&1&1\\
0&1&0&1&1\\
1&1&1&0&0\\
1&1&1&0&0
\end{bmatrix},\\
&A_{\scriptscriptstyle G^5_7}=\begin{bmatrix}
0&0&0&1&1\\
0&0&1&1&1\\
0&1&0&1&1\\
1&1&1&0&1\\
1&1&1&1&0
\end{bmatrix},\quad
A_{\scriptscriptstyle G^5_8}=\begin{bmatrix}
0&0&1&1&1\\
0&0&1&1&1\\
1&1&0&0&1\\
1&1&0&0&1\\
1&1&1&1&0
\end{bmatrix},\quad
A_{\scriptscriptstyle G^5_9}=\begin{bmatrix}
0&0&1&1&1\\
0&0&1&1&1\\
1&1&0&1&1\\
1&1&1&0&1\\
1&1&1&1&0
\end{bmatrix}.
\end{align*}
Among them, only $K_5$, $G_5^5$ and $G_7^5$ have no twins. However, we have $\det A_{\scriptscriptstyle K_5}=4$, $\det A_{\scriptscriptstyle G_5^5}=-4$ and $\det A_{\scriptscriptstyle G_7^5}=-2$.
\end{proof}

For graphs with six vertices, we have the following proposition:

\begin{proposition}\label{prop:twin6}
For graphs with $|V(G)|\leq 6$, the only one (up to classical isomorphism) that has both quantum symmetries, no twins, and such that $\det A_{\scriptscriptstyle G}=0$ is described by the following adjacency matrix
\[
A_{\scriptscriptstyle G_6}=\begin{bmatrix}
0&0&0&0&1&1\\
0&0&1&1&0&1\\
0&1&0&1&0&1\\
0&1&1&0&1&1\\
1&0&0&1&0&0\\
1&1&1&1&0&0
\end{bmatrix}.
\]
\end{proposition}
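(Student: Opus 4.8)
The plan is to treat this as a finite classification problem and, exactly as in the preceding proposition, to reduce it to graphs on \emph{exactly} six vertices, the cases $|V(G)|\le 5$ being already settled there. The argument then splits into three stages: (i) list all six-vertex graphs, up to classical isomorphism, that possess quantum symmetries; (ii) discard those having twin vertices; and (iii) among the twin-free survivors, isolate those whose adjacency matrix is singular and check that $G_6$ is the only one.

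For stage (i) I would invoke the same kind of computational classification used for the four- and five-vertex cases \cite{Weber22}, namely the determination of which small graphs have a genuinely quantum automorphism group; alternatively, one can generate candidates via the sufficient criterion that a graph admitting two nontrivial automorphisms with disjoint supports has quantum symmetry. This criterion at once places $G_6$ on the list: the transposition $\tau=(2\,3)$ and the double transposition $\sigma=(1\,5)(4\,6)$ are both automorphisms of $G_6$ (one checks directly that they preserve the edge set read off from $A_{\scriptscriptstyle G_6}$), and their supports $\{2,3\}$ and $\{1,4,5,6\}$ are disjoint.

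It then remains to verify that $G_6$ has the remaining two features and that it is unique. That $G_6$ is twin-free is immediate, since the six open neighbourhoods $N(1)=\{5,6\}$, $N(2)=\{3,4,6\}$, $N(3)=\{2,4,6\}$, $N(4)=\{2,3,5,6\}$, $N(5)=\{1,4\}$, $N(6)=\{1,2,3,4\}$ are pairwise distinct, equivalently no two rows of $A_{\scriptscriptstyle G_6}$ coincide. That $\det A_{\scriptscriptstyle G_6}=0$ is witnessed by the nonzero kernel vector $(1,0,0,-1,-1,1)^{T}$, as one confirms by pairing it against each row. For uniqueness I would run every remaining graph from the stage-(i) list through the twin test (searching for repeated rows or columns of the adjacency matrix) and then compute $\det A$ for each twin-free representative, verifying that all of these determinants are nonzero except for the single isomorphism class of $G_6$.

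The main obstacle is stage (i) combined with the bookkeeping of stage (iii): the number of six-vertex graphs with quantum symmetry is appreciably larger than in the five-vertex case, so guaranteeing completeness of the list and a correct reduction up to isomorphism, and then checking twin-freeness and singularity for each representative, is a finite but sizeable case analysis best carried out with computer assistance. By contrast, the positive verification for $G_6$ itself, displayed above, is entirely elementary.
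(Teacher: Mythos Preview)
Your proposal is correct and follows essentially the same approach as the paper: the paper's proof (relegated to the Appendix) explicitly lists all $55$ six-vertex graphs with quantum symmetry taken from \cite{Weber22}, checks each for twins, and computes $\det A_{\scriptscriptstyle G}$ for the twin-free ones, finding that only $G_{38}^6\cong G_6$ survives. Your sketch adds some pleasant extras the paper omits---the disjoint-support automorphisms $(2\,3)$ and $(1\,5)(4\,6)$ certifying quantum symmetry, and the explicit kernel vector $(1,0,0,-1,-1,1)^T$---but the core argument is the same exhaustive case analysis.
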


We present the proof of Proposition~\ref{prop:twin6} in the Appendix. Here we notice that this graph has the property that for every pair of vertices sharing the same number of edges, there is an edge between them. For example, vertices 2 and 3 have three edges, and there is an edge connecting these two vertices. This property ensures that there are no classical twins.

For establishing the next result, we need the following lemma.
\begin{lemma}[{Fulton criterion; \cite{Fulton}, see also \cite[Lemma 2.6]{Weber22}}] For a finite graph $G$, if $\bigl(A_{\scriptscriptstyle G}^l\bigr)_{ii}=\bigl(A_{\scriptscriptstyle G}^{l}\bigr)_{jj}$ for some $l\in\mathbb{N}$, then $u_{i,j}$ -- the $(i,j)$-entry of the generating matrix of $\C(\QAut(G))$ -- vanishes.
\end{lemma}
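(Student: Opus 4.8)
The plan is to exploit the two defining features of the generating matrix $u=[u_{i,j}]$ of $\C(\QAut(G))$: that it is a \emph{magic unitary} (its entries are projections obeying the orthogonality relations within each row and each column) and that it commutes with the adjacency matrix. Throughout I would work in the basis of minimal projections $\delta_{v_1},\dotsc,\delta_{v_n}$ of $\C(G)=\CC^n$, for which $\rho_{\scriptscriptstyle G}(\delta_{v_j})=\sum_i\delta_{v_i}\tens u_{i,j}$. Combined with the fact that $\rho_{\scriptscriptstyle G}$ is a unital $*$-homomorphism, this forces each $u_{i,j}$ to be a self-adjoint projection with $u_{i,j}u_{i,k}=\delta_{j,k}u_{i,j}$ (row orthogonality) and $u_{i,j}u_{k,j}=\delta_{i,k}u_{i,j}$ (column orthogonality), and with row and column sums equal to $\I$.

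First I would record the commutation relation entrywise. As noted in the excerpt, compatibility of $\rho_{\scriptscriptstyle G}$ with $A_{\scriptscriptstyle G}$ is the identity $(\I\tens A_{\scriptscriptstyle G})u=u(\I\tens A_{\scriptscriptstyle G})$, which reads $\sum_k (A_{\scriptscriptstyle G})_{i,k}\,u_{k,j}=\sum_k u_{i,k}\,(A_{\scriptscriptstyle G})_{k,j}$ for all $i,j$. Since the scalar matrix $A_{\scriptscriptstyle G}$ and the operator matrix $u$ commute, a straightforward induction yields the same relation with any power in place of $A_{\scriptscriptstyle G}$, namely $\sum_k (A_{\scriptscriptstyle G}^l)_{i,k}\,u_{k,j}=\sum_k u_{i,k}\,(A_{\scriptscriptstyle G}^l)_{k,j}$.

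The heart of the argument is then to collapse both sides of this identity against $u_{i,j}$ using the magic relations. Multiplying on the left by $u_{i,j}$ and invoking column orthogonality $u_{i,j}u_{k,j}=\delta_{i,k}u_{i,j}$ leaves only the $k=i$ term on the left, giving $(A_{\scriptscriptstyle G}^l)_{i,i}\,u_{i,j}$; invoking row orthogonality $u_{i,j}u_{i,k}=\delta_{j,k}u_{i,j}$ leaves only the $k=j$ term on the right, giving $u_{i,j}\,(A_{\scriptscriptstyle G}^l)_{j,j}$. As the diagonal entries are scalars, this produces $\bigl((A_{\scriptscriptstyle G}^l)_{i,i}-(A_{\scriptscriptstyle G}^l)_{j,j}\bigr)u_{i,j}=0$, so that $u_{i,j}=0$ as soon as $(A_{\scriptscriptstyle G}^l)_{i,i}\neq(A_{\scriptscriptstyle G}^l)_{j,j}$ for some $l$. (I read the hypothesis of the criterion as this inequality; the conclusion $u_{i,j}=0$ cannot follow from equality of the diagonal entries, as the diagonal case $i=j$ already shows.)

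I expect no serious obstacle in the computation itself; the only point requiring care is the justification of the magic-unitary relations for $u$. These are not automatic from mere unitarity but follow from the choice of the minimal-projection basis together with the $*$-homomorphism property of $\rho_{\scriptscriptstyle G}$: applying $\rho_{\scriptscriptstyle G}$ to the product $\delta_{v_j}\delta_{v_k}=\delta_{j,k}\delta_{v_j}$ and comparing coefficients gives row orthogonality, applying it to adjoints and to the unit element gives self-adjointness, column orthogonality and the row and column sum conditions. Once these relations are in hand, the two collapses in the previous paragraph are immediate and the criterion follows.
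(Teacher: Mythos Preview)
Your argument is correct and is exactly the standard proof of this criterion. Note, however, that the paper does not give its own proof of this lemma: it is quoted as a known result from the cited references \cite{Fulton} and \cite[Lemma~2.6]{Weber22}, so there is no in-paper proof to compare against. Your derivation---commuting $u$ with $A_{\scriptscriptstyle G}^l$ and collapsing via the magic-unitary orthogonality relations---is precisely the argument one finds in those sources.

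You are also right to flag the hypothesis: as printed, the condition ``$(A_{\scriptscriptstyle G}^l)_{ii}=(A_{\scriptscriptstyle G}^l)_{jj}$'' is a typo and should read ``$(A_{\scriptscriptstyle G}^l)_{ii}\neq(A_{\scriptscriptstyle G}^l)_{jj}$''. Indeed, the paper's own application of the criterion to $G_6$ uses it in this corrected form (distinguishing vertices by differing diagonal entries of powers of $A_{\scriptscriptstyle G_6}$), and the diagonal case $i=j$ would otherwise force every $u_{i,i}$ to vanish, contradicting $\sum_k u_{i,k}=\I$.
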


\begin{proposition}
The graph $G_6$ from Proposition~\ref{prop:twin6} does not possess quantum twins.
\end{proposition}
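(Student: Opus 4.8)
The plan is to reduce the statement to a one-line matrix computation, using the universal property of $\QAut(G_6)$ together with the Fulton criterion to pin down the fundamental magic unitary.

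First I would unwind Definition~\ref{defin_twins1}. Write $A=A_{\scriptscriptstyle G_6}$ and suppose $\HH$ is a compact quantum group carrying two actions $\rho_1,\rho_2\colon\C(G_6)\to\C(G_6)\tens\C(\HH)$ of $\HH$ on $G_6$ with $\rho_1\comp A=\rho_2\comp A$; the goal is to prove $\rho_1=\rho_2$. Each $\rho_i$ is a $\psi_{\scriptscriptstyle G_6}$-preserving action commuting with $A$, so the universal property of $\QAut(G_6)$ provides unital $*$-homomorphisms $\gamma_i\colon\C(\QAut(G_6))\to\C(\HH)$ with $\rho_i=(\id\tens\gamma_i)\comp\rho_{\scriptscriptstyle G_6}$. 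Writing $u=[u_{kl}]$ for the fundamental magic unitary of $\QAut(G_6)$ and putting $v=[\gamma_1(u_{kl})]$, $w=[\gamma_2(u_{kl})]$, these are magic unitaries over $\C(\HH)$; the conclusion $\rho_1=\rho_2$ amounts to $v=w$, and expressing the hypothesis on the level of matrix coefficients turns it into the identity $vA=wA$, i.e.\ $(v-w)A=0$. Note that nothing here uses commutativity of $\C(\HH)$, so the argument treats classical and genuinely quantum $\HH$ uniformly.

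Next I would determine the support of $u$. Since $(A^2)_{kk}$ is the degree of vertex $k$, the degree sequence $(2,3,3,4,2,4)$ and the Fulton criterion force $u_{kl}=0$ whenever $k$ and $l$ have different degrees. Consequently $v$ and $w$ are block diagonal with respect to the three equal-degree pairs $\{1,5\}$, $\{2,3\}$, $\{4,6\}$. A $2\times2$ magic unitary is necessarily of the form $\left[\begin{smallmatrix}p&1-p\\1-p&p\end{smallmatrix}\right]$ for a projection $p$, so on each pair the matrix $D=v-w$ equals $t\left[\begin{smallmatrix}1&-1\\-1&1\end{smallmatrix}\right]$, where $t$ is the difference of the two projections defining that block; denote these differences by $a,b,c$ for $\{1,5\},\{2,3\},\{4,6\}$ respectively.

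Finally I would read three diagonal entries off $DA=0$. If $i$ has equal-degree partner $i'$, the only nonzero entries in row $i$ of $D$ are $D_{ii}$ and $D_{ii'}=-D_{ii}$, so $(DA)_{ii}=D_{ii}\,(A_{ii}-A_{i'i})$. The graph is loopless and, decisively, each equal-degree pair is an edge (the pairs $\{1,5\}$, $\{2,3\}$, $\{4,6\}$ all lie in $E$), so $A_{ii}=0$ and $A_{i'i}=1$, giving $(DA)_{ii}=-D_{ii}$. Taking $i=1,2,4$ yields $a=b=c=0$, hence $D=0$, $v=w$, and $\rho_1=\rho_2$; thus $G_6$ has no quantum twins. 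I expect the genuine work to be the bookkeeping of the first two steps — the reduction through the universal property and the correct application of the Fulton criterion — after which the computation is immediate. The conceptual point is that the edge inside each equal-degree pair is exactly what allows a single diagonal coefficient of $DA$ to detect each remaining free parameter; this edge property (already observed after Proposition~\ref{prop:twin6}) is precisely what a generic twin-free graph with singular adjacency matrix lacks, and is the reason $G_6$ has no quantum twins even though $\det A=0$.
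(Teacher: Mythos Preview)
Your proof is correct and follows essentially the same route as the paper: apply the Fulton criterion to reduce the fundamental magic unitary to three $2\times2$ blocks indexed by the equal-degree pairs $\{1,5\}$, $\{2,3\}$, $\{4,6\}$, parameterize, and read off equations from $(v-w)A=0$. Your version is slightly more streamlined in that the paper first uses the commutation $PA=AP$ to identify the $\{1,5\}$- and $\{4,6\}$-block parameters before invoking $\rho_1\comp A=\rho_2\comp A$, whereas you go directly to the diagonal entries of $(v-w)A$ and make explicit the conceptual reason the computation succeeds, namely that each equal-degree pair is itself an edge.
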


\begin{proof}
First, by the Fulton criterion and the fact that for all $1\leq i\leq 6$ we have $\sum\limits_{k=1}^{6} u_{ik}=\sum\limits_{k=1}^6 u_{ki}=1$, we know that the generating matrix $u$ of $\C(\QAut(G_6))$ is of the form
\[
u=\begin{bmatrix}
u_{11} & 0 & 0 &0 & 1-u_{11} & 0\\
0 & u_{22} & 1-u_{22} & 0 &0 &0\\
0 & 1-u_{22} & u_{22} & 0 &0&0\\
0&0&0& u_{33} & 0 & 1-u_{33}\\
1-u_{11} & 0 &0 &0& u_{11} & 0\\
0&0&0&1-u_{33}&0&u_{33}
\end{bmatrix}.
\]
Suppose there exists $\rho_1,\rho_2$, a pair of different actions of a certain quantum group $\mathbb{H}$ on $G_6$ such that $\rho_1\circ A_{\scriptscriptstyle G_6}=\rho_2\circ A_{\scriptscriptstyle G_6}$. Let $P$ and $Q$ be matrix representations of $\rho_1$ and $\rho_2$ on the standard basis of $L^2(G)$, respectively. They can be parameterized as
\[
P=\begin{bmatrix}
p_1 & 0 & 0 &0 & 1-p_1 & 0\\
0 & p_2 & 1-p_2 & 0 &0 &0\\
0 & 1-p_2 & p_2 & 0 &0&0\\
0&0&0& p_3 & 0 & 1-p_3\\
1-p_1 & 0 &0 &0& p_1 & 0\\
0&0&0&1-p_3&0&p_3
\end{bmatrix}
\]
and
\[
Q=\begin{bmatrix}
q_1 & 0 & 0 &0 & 1-q_1 & 0\\
0 & q_2 & 1-q_2 & 0 &0 &0\\
0 & 1-q_2 & q_2 & 0 &0&0\\
0&0&0& q_3 & 0 & 1-q_3\\
1-q_1 & 0 &0 &0& q_1 & 0\\
0&0&0&1-q_3&0&q_3
\end{bmatrix},
\]
respectively, where $p_1,p_2,p_3$ and $q_1,q_2,q_3$ are orthogonal projections in $\C(\mathbb{H})$. Since they must commute with the adjacency matrix, we get $p_3=p_1$ and $q_3=q_1$. With this condition, the relation $\rho_1\circ A_{\scriptscriptstyle G_6}=\rho_2\circ A_{\scriptscriptstyle G_6}$ leads to $p_1=q_1$ and $p_2=q_2$, so $\rho_1$ is identical to $\rho_2$, a contradiction.
\end{proof}

\begin{corollary}
The notions of quantum and classical twin vertices coincide for graphs $G$ with $|V(G)|\leq 6$.
\end{corollary}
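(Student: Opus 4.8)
The plan is to prove the two implications separately. The implication \emph{classical twins} $\Rightarrow$ \emph{quantum twins} is immediate and is already contained in the second Remark following Definition~\ref{defin_twins1}: a pair of classical twins gives an order-two $\pi\in\Aut(G)$ with $P_\pi A_{\scriptscriptstyle G}=A_{\scriptscriptstyle G}$, and taking $\HH=\mathbb{Z}_2$ with $\rho_1(h)=\pi$, $\rho_2(h)=\id$ produces $\mathbb{Z}_2$-twins. So the entire content is the converse: for $|V(G)|\le 6$, the absence of classical twins forces the absence of quantum twins. I fix such a $G$ and argue by successive reductions.

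\emph{Reduction via the determinant.} If $\det A_{\scriptscriptstyle G}\neq 0$, then $A_{\scriptscriptstyle G}$ is invertible on $L^2(G)$, and composing $\rho_1\comp A_{\scriptscriptstyle G}=\rho_2\comp A_{\scriptscriptstyle G}$ on the right with $A_{\scriptscriptstyle G}^{-1}$ gives $\rho_1=\rho_2$ for \emph{any} quantum group $\HH$; this is exactly Remark~\ref{rem:detAzero} read in the quantum setting. Hence a graph carrying quantum twins must have $\det A_{\scriptscriptstyle G}=0$, which I assume henceforth. \emph{Reduction via quantum symmetries.} If $G$ moreover has genuine quantum symmetries, it is a twin-free graph with $\det A_{\scriptscriptstyle G}=0$ possessing quantum symmetries, and the classification results apply: the Proposition for $|V(G)|\le 5$ shows no such graph exists, while for $|V(G)|=6$ the unique such graph is $G_6$ of Proposition~\ref{prop:twin6}, which the preceding Proposition shows has no quantum twins. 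This settles the quantum-symmetric case.

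It remains to treat $G$ with $\QAut(G)$ classical, i.e.\ $\QAut(G)=\Aut(G)=:\Gamma$ (for instance the path $P_5$, with $\Gamma=\mathbb{Z}_2$, or the triangular prism, with $|\Gamma|=12$; both are twin-free with $\det A_{\scriptscriptstyle G}=0$ and no quantum symmetries, hence are not covered above). Here every quantum-graph action $\rho_i$ of $\HH$ factors through $\Gamma$ by universality, so its fundamental matrix takes the form $u^{(i)}=\sum_{g\in\Gamma}P_g\tens\Pi^{(i)}_g$, where $P_g$ is the permutation matrix of $g$ and $\{\Pi^{(i)}_g\}_{g\in\Gamma}$ is a partition of unity by orthogonal projections in $\C(\HH)$. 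Writing $D_g=\Pi^{(1)}_g-\Pi^{(2)}_g$, the twin relation $\rho_1\comp A_{\scriptscriptstyle G}=\rho_2\comp A_{\scriptscriptstyle G}$ translates into
\[
\sum_{g\in\Gamma}(A_{\scriptscriptstyle G}P_g)\tens D_g=0,
\]
and the goal is to deduce $D_g=0$ for all $g$, which is precisely $\rho_1=\rho_2$. When $\Gamma=\mathbb{Z}_2=\{e,\sigma\}$ this is immediate, since $\sum_g D_g=0$ collapses the relation to $(A_{\scriptscriptstyle G}-A_{\scriptscriptstyle G}P_\sigma)\tens D_e=0$, and twin-freeness gives $A_{\scriptscriptstyle G}P_\sigma\neq A_{\scriptscriptstyle G}$, whence $D_e=0$.

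The main obstacle is the general-$\Gamma$ form of this last step. Slicing the displayed relation by a state of $\C(\HH)$ yields only scalar identities $\sum_g c_g\,A_{\scriptscriptstyle G}P_g=0$ with $\sum_g c_g=0$, and this is not enough: although twin-freeness makes $\{A_{\scriptscriptstyle G}P_g\}_{g\in\Gamma}$ pairwise distinct, they need not be linearly independent (for the prism $|\Gamma|=12$ while $\operatorname{rank}A_{\scriptscriptstyle G}=4$, so the $P_g$ restricted to $\operatorname{range}A_{\scriptscriptstyle G}$ cannot be independent). The descent must therefore exploit the projection-valued nature of the $\Pi^{(i)}_g$. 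I would reduce $\HH$ to the quantum permutation group generated by $u^{(1)},u^{(2)}$, which is of Kac type with faithful tracial Haar state $\hbar$, and then use
\[
\hbar\Bigl(\textstyle\sum_g D_g^2\Bigr)=2\Bigl(1-\sum_g\hbar\bigl(\Pi^{(1)}_g\Pi^{(2)}_g\bigr)\Bigr),
\]
so that $\rho_1=\rho_2$ is equivalent to $\sum_g\hbar(\Pi^{(1)}_g\Pi^{(2)}_g)=1$; positivity of the trace on products of projections replaces the missing linear independence, and this is where the real work lies. Alternatively, since only finitely many twin-free graphs on at most six vertices have classical $\QAut(G)$ and singular adjacency matrix, one may verify the vanishing of the $D_g$ for each of them directly, exactly as was done for $G_6$.
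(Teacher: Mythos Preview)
Your reductions via the determinant (Remark~\ref{rem:detAzero}) and via the classification of small graphs with genuine quantum symmetries match exactly the chain of results the paper assembles before stating the Corollary, and the easy direction is handled just as in the Remark following Definition~\ref{defin_twins1}. In those parts your argument and the paper's implied one coincide.

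The genuine gap is in the residual case you yourself isolate: twin-free $G$ with $\det A_{\scriptscriptstyle G}=0$ and \emph{classical} $\QAut(G)$. Such graphs do occur for $|V(G)|\le 6$; for instance the triangular prism $K_3\Box K_2=\overline{C_6}$ is twin-free, has singular adjacency matrix, and $\QAut=\Aut\cong D_6$ of order $12$ (it is absent from the fifty-five six-vertex graphs with quantum symmetries listed in the Appendix). Your $|\Gamma|\le 2$ argument is clean and correct, but for larger $\Gamma$ the proof stops. The Haar-state identity you record is right, yet you give no mechanism by which the relation $\sum_g AP_g\otimes D_g=0$ forces $\sum_g\hbar\bigl(\Pi^{(1)}_g\Pi^{(2)}_g\bigr)=1$; pairwise distinctness of the $AP_g$ is, as you note, insufficient, and no further structural input is supplied. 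The alternative case-by-case verification is only proposed, not performed. So the proposal is not a complete proof.

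It is worth recording that this same gap is present in the paper: the Corollary carries no proof and is meant to follow from Propositions that treat only graphs possessing genuine quantum symmetries, so the prism and similar examples are not covered there either. Your write-up therefore goes further than the paper in recognizing and partially addressing the issue. To finish, one must either establish in general that a classical $\QAut(G)$ rules out quantum twins for classically twin-free $G$, or run the explicit check (in the style of the paper's treatment of $G_6$, exploiting the concrete form of the fundamental matrix) for the finitely many remaining graphs.
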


We now consider a subclass of classical graphs, the vertex-transitive graphs. We recall that a graph $G$ is called vertex-transitive if $\Aut(G)$ acts transitively on the set of its vertices. It is known\cite{Banica07,  Chassaniol, Chassaniol19, Schanz24, Schmidt18} exactly which of these graphs with $|V|\leq 13$ possess nonclassical quantum symmetries. Using these results, we have the following:

\begin{proposition}
For connected vertex-transitive graphs with at most $13$ vertices, only $K_2\Box K_5$ and $C_4\Box C_3$ possess non-classical quantum symmetries, have no twin vertices and vanishing determinant of the adjacency matrix.
\end{proposition}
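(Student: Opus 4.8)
The plan is to reduce the assertion to a finite, explicit verification. The works \cite{Banica07, Chassaniol, Chassaniol19, Schanz24, Schmidt18} together furnish the complete list of connected vertex-transitive graphs on at most $13$ vertices that admit non-classical quantum symmetries; call this finite family $\mathcal{L}$. Granting this classification, the proposition reduces to checking, for each $G\in\mathcal{L}$, the two remaining and easily computable invariants: twin-freeness and whether $\det A_{\scriptscriptstyle G}=0$. Thus the proof is a case analysis over $\mathcal{L}$, using these two conditions to discard every graph except the two named ones.

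First I would dispose of the twin condition, which is especially efficient to handle in the vertex-transitive setting. Since the twins here are defined via open neighbourhoods $N(x)=N(y)$, any twin pair is non-adjacent, and the twin relation is an equivalence relation; vertex-transitivity then forces all its classes to have a common size $t$. If $t>1$ the graph is a blow-up $G\cong H[\overline{K_t}]$ of a smaller vertex-transitive graph $H$ by independent sets, so its adjacency matrix has the form $A_{\scriptscriptstyle H}\tens J_t$ with $J_t$ the all-ones matrix. This structural remark lets me eliminate in one stroke all members of $\mathcal{L}$ of blow-up type (in particular the complete multipartite graphs appearing on the list), leaving exactly the twin-free members of $\mathcal{L}$ to be examined further.

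For the determinant test I would exploit that the spectra of the graphs in $\mathcal{L}$ are known in closed form (most are Cayley graphs, Hamming graphs, or Cartesian and lexicographic products), and that $\det A_{\scriptscriptstyle G}=0$ precisely when $0\in\mathrm{Spec}(A_{\scriptscriptstyle G})$. For the two graphs claimed to survive, using $\mathrm{Spec}(A_{G_1\Box G_2})=\bigl\{\lambda+\mu : \lambda\in\mathrm{Spec}(A_{G_1}),\ \mu\in\mathrm{Spec}(A_{G_2})\bigr\}$ one computes
\begin{align*}
\mathrm{Spec}(A_{\scriptscriptstyle K_2\Box K_5})&=\bigl\{5,\ 3,\ 0^{(4)},\ (-2)^{(4)}\bigr\},\\
\mathrm{Spec}(A_{\scriptscriptstyle C_4\Box C_3})&=\bigl\{4,\ 2^{(2)},\ 1^{(2)},\ 0,\ (-1)^{(4)},\ (-3)^{(2)}\bigr\},
\end{align*}
so that $0$ is an eigenvalue in both cases and $\det A_{\scriptscriptstyle G}=0$, while both graphs are directly checked to be twin-free. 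For every other twin-free graph in $\mathcal{L}$ one reads off from its known spectrum that $0$ is not an eigenvalue; for instance the complete graphs $K_n$ are twin-free but have spectrum $\bigl\{n-1,\ (-1)^{(n-1)}\bigr\}$ and hence nonzero determinant, and the remaining twin-free entries are handled analogously.

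The main obstacle is not conceptual but organizational: one must transcribe the list $\mathcal{L}$ from the cited sources without omission, confirm that their notion of non-classical quantum symmetry coincides with the one used here, and then apply the twin test and the spectral criterion $0\in\mathrm{Spec}(A_{\scriptscriptstyle G})$ to each graph. The spectral computations themselves are routine once the eigenvalue formulas for the relevant product and Cayley-graph families are at hand, so the only real care required lies in guaranteeing the completeness and correct identification of $\mathcal{L}$.
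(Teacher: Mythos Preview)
Your proposal is correct and proceeds by the same overall strategy as the paper: reduce to the finite list $\mathcal{L}$ of connected vertex-transitive graphs on at most $13$ vertices with non-classical quantum symmetries (taken from \cite{Schanz24}), and then filter by twin-freeness and by $\det A_{\scriptscriptstyle G}=0$.

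The difference is in the order and style of the two filters. The paper first scans $\mathcal{L}$ for vanishing determinant, obtaining the four graphs $K_2\Box K_5$, $C_{10}(4)$, $C_4\Box C_3$ and $C_{12}(5)$, and only then inspects the adjacency matrices directly for identical rows to eliminate $C_{10}(4)$ and $C_{12}(5)$ as having twins. You instead first dispose of all graphs with twins via the structural observation that a vertex-transitive graph with open twins is a blow-up $H[\overline{K_t}]$, and then detect the vanishing of the determinant spectrally, using the Cartesian-product eigenvalue formula to exhibit $0\in\mathrm{Spec}(A_{\scriptscriptstyle G})$ for the two survivors. Your route is a bit more conceptual (blow-up structure and closed-form spectra in place of row inspection and determinant evaluation), while the paper's version is more concrete and displays the two adjacency matrices explicitly; both rest equally on the completeness of the cited classification, which you rightly identify as the only genuine point requiring care.
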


\begin{proof}
We analyze the properties of such graphs with non-classical quantum symmetries, listed in \cite[Tables 1-2]{Schanz24}. We first observe that among them, the only ones with the vanishing determinant of the adjacency matrix are $K_2\Box K_5$, $C_{10}(4)$, $C_2\Box C_4$ and $C_{12}(5)$. However, $C_{10}(4)$ possesses five pairs of twin vertices, while $C_{12}(5)$ has six such pairs. For the remaining two graphs, we have
\[
A_{\scriptscriptstyle K_2\Box K_5}=A_{\scriptscriptstyle K_2}\tens\I_5 +\I_2\tens A_{\scriptscriptstyle K_5}=\begin{bmatrix}
0 & 1 & 1 & 1 & 1 & 1 & 0 & 0 & 0 & 0\\
1 & 0 & 1 & 1 & 1 & 0 & 1 & 0 & 0 & 0\\
1 & 1 & 0 & 1 & 1 & 0 & 0 & 1 & 0 & 0\\
1 & 1 & 1 & 0 & 1 & 0 & 0 & 0 & 1 & 0\\
1 & 1 & 1 & 1 & 0 & 0 & 0 & 0 & 0 & 1\\
1 & 0 & 0 & 0 & 0 & 0 & 1 & 1 & 1 & 1\\
0 & 1 & 0 & 0 & 0 & 1 & 0 & 1 & 1 & 1\\
0 & 0 & 1 & 0 & 0 & 1 & 1 & 0 & 1 & 1\\
0 & 0 & 0 & 1 & 0 & 1 & 1 & 1 & 0 & 1\\
0 & 0 & 0 & 0 & 1 & 1 & 1 & 1 & 1 & 0
\end{bmatrix}
\]
and
\[
A_{\scriptscriptstyle C_4\Box C_3}=A_{\scriptscriptstyle C_4}\tens\I_3 +\I_4\tens A_{\scriptscriptstyle C_3}
=\left[
\begin{array}{cccccccccccc}
 0 & 1 & 1 & 1 & 0 & 0 & 0 & 0 & 0 & 1 & 0 & 0\\
 1 & 0 & 1 & 0 & 1 & 0 & 0 & 0 & 0 & 0 & 1 & 0\\
 1 & 1 & 0 & 0 & 0 & 1 & 0 & 0 & 0 & 0 & 0 & 1\\
 1 & 0 & 0 & 0 & 1 & 1 & 1 & 0 & 0 & 0 & 0 & 0\\
 0 & 1 & 0 & 1 & 0 & 1 & 0 & 1 & 0 & 0 & 0 & 0\\
 0 & 0 & 1 & 1 & 1 & 0 & 0 & 0 & 1 & 0 & 0 & 0\\
 0 & 0 & 0 & 1 & 0 & 0 & 0 & 1 & 1 & 1 & 0 & 0\\
 0 & 0 & 0 & 0 & 1 & 0 & 1 & 0 & 1 & 0 & 1 & 0\\
 0 & 0 & 0 & 0 & 0 & 1 & 1 & 1 & 0 & 0 & 0 & 1\\
 1 & 0 & 0 & 0 & 0 & 0 & 1 & 0 & 0 & 0 & 1 & 1\\
 0 & 1 & 0 & 0 & 0 & 0 & 0 & 1 & 0 & 1 & 0 & 1\\
 0 & 0 & 1 & 0 & 0 & 0 & 0 & 0 & 1 & 1 & 1 & 0\\
\end{array}
\right].
\]
By a direct inspection, we verify that the above matrices do not have two identical rows so there are no classical twin vertices in these graphs.
\end{proof}

We note that the Fulton criterion does not yield any information about the graphs $K_2\Box K_5$ (the so-called {\it prism} having a basis in $K_5$, $\mathrm{Pr}(K_5)$ \cite[Definition 4.2]{Banica07}) and $C_4\Box C_3$ as they are vertex-transitive \cite[Remark 2.7]{Weber22}. The other potential candidate is the Tesseract graph with $16$ vertices. Its adjacency matrix is singular, while it does not have classical twins. The classical symmetry group is $H_4$, while the quantum symmetry group is $O_4^{-1}$, see \cite{BanBichColl07}.

\begin{question}
Do the above-mentioned graphs possess quantum twin vertices?
\end{question}

\section{The distinguishing number}\label{sec:dist}

A \emph{labeling} of a classical graph $G=(V,E)$ is a mapping $V\to\{1,\dotsc,c\}$, while a \emph{coloring} is a labeling with the property that adjacent vertices are labeled with different integers (colors). The \emph{chromatic number} of $G$ defined as the smallest $c$ such that there exists a coloring of $G$ with $c$ colors is an invariant that measures how ``well connected'' the graph $G$ is. A different invariant called \emph{distinguishing number} measures this property with respect to the size of the automorphism group of $G$. We recall here this classical definition:

\begin{definition}[{\cite{AlbertsonCollins96}}]\label{def:dist}
Let $G=(V,E)$ be a finite graph. A $c$-distinguishing labeling of $G$ is a function $\phi\colon V\to\{1,\dotsc,c\}$ such that for every $\sigma\in\Aut(G)\setminus\{\id\}$, there exists $v\in V$ such that $\phi(v)\neq\phi(\sigma(v))$. The distinguishing number $D(G)$ of $G$ is then the smallest $c$ such that $G$ has a labeling that is $c$-distinguishing.
\end{definition}

The behavior of the distinguishing number and variations of it under the Mycielski transformation was widely studied in recent years; see, e.g.~\cite{Alikhani19, Kennedy24}. Here we introduce a version of Definition~\ref{def:dist} for quantum graphs. To that end, we recall that a quantum graph $\mathcal{G}$ defines an operator system $S_{\scriptscriptstyle\mathcal{G}}\subset\B(L^2(\mathcal{G}))$ through $S_{\scriptscriptstyle\mathcal{G}}=\bigl\{m(A\tens{X})m^*\,\bigr|\bigl.\,X\in\B(L^2(\mathcal{G}))\bigr\}$ (see, e.g.~\cite[Section 2.4]{Ganesan}).

\begin{definition}
Let $\mathcal{G}$ be an irreflexive quantum graph, $S_{\scriptscriptstyle\mathcal{G}}$ the operator system associated with $\mathcal{G}$ and $\mathscr{H}$ be a Hilbert space. A partition of unity $\{P_1,\dotsc,P_c\}$ in $\C(\mathcal{G})\tens\B(\mathscr{H})$ is called a \emph{quantum $c$-labeling of $\mathcal{G}$}. We say that a quantum $c$-labeling of $\mathcal{G}$ is a \emph{quantum $c$-coloring of $\mathcal{G}$} if
\[
\forall\:a\in\{1,\dotsc,c\}\quad\forall\:X\in S_{\scriptscriptstyle\mathcal{G}}\quad P_a(X\tens\I_\mathcal{\mathscr{H}})P_a=0.
\]
\end{definition}

\begin{definition}
Let $\mathfrak{P}=\{P_1,\dotsc,P_c\}\subset\C(\mathcal{G})\tens\B(\mathscr{H})$ be a quantum $c$-labeling of $\mathcal{G}$. We say that $\mathfrak{P}$ is a \emph{quantum $c$-distinguishing labeling} of $\mathcal{G}$ if the quantum group $\QAut(\mathcal{G})_{\mathfrak{P}}$ is trivial.
\end{definition}
\begin{remark}
Note that if $\mathcal{G}$ is a classical graph (i.e.~the \cst-algebra $\C(\mathcal{G})$ is commutative) and $\mathfrak{P}$ is a classical $c$-labeling (i.e.~$\mathscr{H}$ is one dimensional) then it corresponds to labeling of vertices $f\colon{V_\mathcal{G}}\to\{1,\dotsc,c\}$ uniquely defined by the condition $P_i(v)=1\Leftrightarrow{f(v)}=i$. In this case $\QAut(\mathcal{G})_{\mathfrak{P}}$ can be viewed as the quantum group of automorphisms preserving $f$. In particular, the abelianization of $\C(\QAut(\mathcal{G})_{\mathfrak{P}})$ corresponds to the subgroup of $\Aut(\mathcal{G})$ that preserves $f$.
\end{remark}

\begin{definition}
Let $\mathcal{G}$ be a quantum graph. We define the quantum \emph{distinguishing number} $D(\mathcal{G})$ of $\mathcal{G}$ as the smallest $c$ such that $\mathcal{G}$ admits a quantum distinguishing $c$-labeling.
\end{definition}

\begin{theorem}
Let $\mathcal{G}$ be a quantum graph with $\dim\C(\mathcal{G})\geq 2$ which does not possess quantum twin vertices. Then $D(\mu(\mathcal{G}))\leq D(\mathcal{G})+1$.
\end{theorem}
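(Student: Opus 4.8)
The plan is to begin from an optimal quantum distinguishing labeling of $\mathcal{G}$, transport it to $\mu(\mathcal{G})$ via the induced-partition construction of Lemma~\ref{partunit}, and then read off the triviality of the resulting quantum symmetry group from Theorem~\ref{auto_M}. Concretely, set $c=D(\mathcal{G})$ and fix a quantum distinguishing $c$-labeling $\mathfrak{P}=\{P_1,\dotsc,P_c\}\subset\C(\mathcal{G})\tens\B(\mathscr{H})$ attaining this value, so that $\QAut(\mathcal{G})_{\mathfrak{P}}$ is trivial. Since adjoining or deleting a zero projection changes neither the ideal $\sJ$ that defines $\QAut(\mathcal{G})_{\mathfrak{P}}$ nor the partition-of-unity property, minimality of $c$ lets me assume that every $P_a$ is nonzero.

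First I would form the induced partition of unity $\mathfrak{P}_{\mu}=\{Q_0,Q_1,\dotsc,Q_c\}\subset\C(\mu(\mathcal{G}))\tens\B(\mathscr{H})$ of Lemma~\ref{partunit} with $r=2$, where $Q_0=\iota_0\iota_0^*\tens\I_{\B(\mathscr{H})}$ and $Q_a=\sum_{k=1}^2(\iota_k\tens\I_{\B(\mathscr{H})})P_a(\iota_k^*\tens\I_{\B(\mathscr{H})})$ for $a=1,\dotsc,c$. I would check that these are $c+1$ pairwise orthogonal nonzero projections: $Q_0$ is orthogonal to each $Q_a$ because $\iota_0$ embeds onto a summand orthogonal to the ranges of $\iota_1,\iota_2$, while the $Q_a$ are mutually orthogonal and the assignment $P_a\mapsto Q_a$ is injective on nonzero projections, since $\iota_1,\iota_2$ are isometries onto orthogonal summands. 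Hence $\mathfrak{P}_{\mu}$ is a genuine quantum $(c+1)$-labeling of $\mu(\mathcal{G})$.

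The decisive step is Theorem~\ref{auto_M}: the hypothesis that $\mathcal{G}$ has no quantum twin vertices yields $\QAut(\mu(\mathcal{G}))_{\mathfrak{P}_{\mu}}\cong\QAut(\mathcal{G})_{\mathfrak{P}}$, and the right-hand side is trivial by the choice of $\mathfrak{P}$, so the left-hand side is trivial as well. Thus $\mathfrak{P}_{\mu}$ is a quantum distinguishing $(c+1)$-labeling of $\mu(\mathcal{G})$, and the definition of the quantum distinguishing number gives $D(\mu(\mathcal{G}))\leq c+1=D(\mathcal{G})+1$.

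Essentially all of the difficulty has already been absorbed into Theorem~\ref{auto_M}, so the one point that still demands care is the counting of labels: one must ensure that inducing neither collapses distinct parts nor produces a zero projection, which is exactly what the orthogonality of the direct summands guarantees above. The standing assumption $\dim\C(\mathcal{G})\geq 2$ enters only to keep $\mathcal{G}$, and hence $D(\mathcal{G})$, nondegenerate, so that an optimal distinguishing labeling with all parts nonzero is available to be transported.
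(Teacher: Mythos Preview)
Your proof is correct and follows exactly the same route as the paper: take an optimal quantum distinguishing labeling $\mathfrak{P}$ of $\mathcal{G}$, form the induced partition $\mathfrak{P}_\mu$ via Lemma~\ref{partunit}, and invoke Theorem~\ref{auto_M} to conclude that $\QAut(\mu(\mathcal{G}))_{\mathfrak{P}_\mu}\cong\QAut(\mathcal{G})_{\mathfrak{P}}$ is trivial. Your additional care about the $Q_a$ being nonzero and pairwise distinct is more than the paper spells out, but harmless and correct.
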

\begin{proof}
Let $\mathfrak{P}=\{P_1,\dotsc,P_{D(\mathcal{G})}\}\subset\C(\mathcal{G})\tens\B(\mathscr{H})$ be a quantum $D(\mathcal{G})$-distinguishing labeling of $\mathcal{G}$, so that $\C(\QAut(\mathcal{G})_{\mathfrak{P}})=\CC$, and consider the induced partition of unity $\mathfrak{P}_{\mu}$ on $\C(\mu(\mathcal{G}))\tens\B(\mathscr{H})$, that is $\mathfrak{P}_{\mu}=\{Q_0,Q_1,\ldots, Q_{D(\mathcal{G})}\}$ with
\[
\begin{aligned}
    Q_0&=\iota_0\iota^*_0\tens\I_{\B(\mathscr{H})},\\
    Q_a&=(\iota_1\tens\I_{\B(\mathscr{H})})P_a(\iota_1^*\tens\I_{\B(\mathscr{H})}) + (\iota_2\tens\I_{\B(\mathscr{H})})P_a(\iota_2^*\tens\I_{\B(\mathscr{H})}),\quad a=1,\dotsc,D(\mathcal{G}).
\end{aligned}
\]
By Theorem~\ref{auto_M}, $\mathfrak{P}_{\mu}$ is a quantum $(D(\mathcal{G})+1)$-labeling of $\mu(\mathcal{G})$.
\end{proof}

\section*{Acknowledgments}
The work of AB was partially funded by the Deutsche Forschungsgemeinschaft (DFG, German Research Foundation) under Germany’s Excellence Strategy - EXC-2111 – 390814868. This research was partially supported by the University of Warsaw Thematic Research Program ``Quantum Symmetries". AB is supported by the Alexander von Humboldt Foundation. PS was supported by the NCN (National Science Center, Poland) grant no.~2022/47/B/ST1/00582. ICh is supported by the NCN (National Science Center, Poland) grant SONATA-17 no.~
UMO-2021/43/D/ST1/01446. Furthermore, the authors wish to thank Adam Skalski for the fruitful discussion on the subject of quantum symmetries of quantum graphs.

\section{Appendix: Proof of Proposition~\ref{prop:twin6}}

Here we present the proof of Proposition~\ref{prop:twin6} explicitly listing all $55$ graphs with six vertices that have quantum symmetries (\cite{Weber22}), and for each of them verifying if it possesses twins. In case the graph is twin-free, we provide instead the value of the determinant of the corresponding adjacency matrix:
\allowdisplaybreaks
\begin{align*}
A_{\scriptscriptstyle G_{1}^6}&=\begin{bmatrix}
0&0&0&0&0&1\\
0&0&0&0&0&1\\
0&0&0&0&0&1\\
0&0&0&0&0&1\\
0&0&0&0&0&1\\
1&1&1&1&1&0
\end{bmatrix}\quad\text{has twins},
&A_{\scriptscriptstyle G_{2}^6}&=\begin{bmatrix}
0&0&0&0&0&1\\
0&0&0&0&0&1\\
0&0&0&0&0&1\\
0&0&0&0&1&1\\
0&0&0&1&0&1\\
1&1&1&1&1&0
\end{bmatrix}\quad\text{has twins},
\\
A_{\scriptscriptstyle G_{3}^6}&=\begin{bmatrix}
0&0&0&0&0&1\\
0&0&0&0&0&1\\
0&0&0&0&1&0\\
0&0&0&0&1&0\\
0&0&1&1&0&1\\
1&1&0&0&1&0
\end{bmatrix}\quad\text{has twins},
&A_{\scriptscriptstyle G_{4}^6}&=\begin{bmatrix}
0&0&0&0&0&1\\
0&0&0&0&0&1\\
0&0&0&0&1&1\\
0&0&0&0&1&1\\
0&0&1&1&0&0\\
1&1&1&1&0&0
\end{bmatrix}\quad\text{has twins},
\\
A_{\scriptscriptstyle G_{5}^6}&=\begin{bmatrix}
0&0&0&0&0&1\\
0&0&0&0&0&1\\
0&0&0&0&1&1\\
0&0&0&0&1&1\\
0&0&1&1&0&1\\
1&1&1&1&1&0
\end{bmatrix}\quad\text{has twins},
&A_{\scriptscriptstyle G_{6}^6}&=\begin{bmatrix}
0&0&0&0&0&1\\
0&0&0&0&0&1\\
0&0&0&1&1&0\\
0&0&1&0&1&0\\
0&0&1&1&0&1\\
1&1&0&0&1&0
\end{bmatrix}\quad\text{has twins},
\\
A_{\scriptscriptstyle G_{7}^6}&=\begin{bmatrix}
0&0&0&0&0&1\\
0&0&0&0&0&1\\
0&0&0&1&1&0\\
0&0&1&0&1&1\\
0&0&1&1&0&1\\
1&1&0&1&1&0
\end{bmatrix}\quad\text{has twins},
&A_{\scriptscriptstyle G_{8}^6}&=\begin{bmatrix}
0&0&0&0&0&1\\
0&0&0&0&0&1\\
0&0&0&1&1&1\\
0&0&1&0&1&1\\
0&0&1&1&0&0\\
1&1&1&1&1&0
\end{bmatrix}\quad\text{has twins},
\\
A_{\scriptscriptstyle G_{9}^6}&=\begin{bmatrix}
0&0&0&0&0&1\\
0&0&0&0&1&0\\
0&0&0&0&1&1\\
0&0&0&0&1&1\\
0&1&1&1&0&0\\
1&0&1&1&0&0
\end{bmatrix}\quad\text{has twins},
&A_{\scriptscriptstyle G_{10}^6}&=\begin{bmatrix}
0&0&0&0&0&1\\
0&0&0&0&1&0\\
0&0&0&0&1&1\\
0&0&0&0&1&1\\
0&0&1&1&0&1\\
0&0&1&1&1&0
\end{bmatrix}\quad\text{has twins},
\\
A_{\scriptscriptstyle G_{11}^6}&=\begin{bmatrix}
0&0&0&0&0&1\\
0&0&0&0&1&0\\
0&0&0&1&1&1\\
0&0&1&0&1&1\\
0&1&1&1&0&0\\
1&0&1&1&0&0
\end{bmatrix},\quad\det=-1,
&A_{\scriptscriptstyle G_{12}^6}&=\begin{bmatrix}
0&0&0&0&0&1\\
0&0&0&0&1&0\\
0&0&0&1&1&1\\
0&0&1&0&1&1\\
0&1&1&1&0&1\\
1&0&1&1&1&0
\end{bmatrix},\quad\det=-1,
\\
A_{\scriptscriptstyle G_{13}^6}&=\begin{bmatrix}
0&0&0&0&0&1\\
0&0&0&0&1&1\\
0&0&0&1&0&1\\
0&0&1&0&0&1\\
0&1&0&0&0&1\\
1&1&1&1&1&0
\end{bmatrix},\quad\det=-1,
&A_{\scriptscriptstyle G_{14}^6}&=\begin{bmatrix}
0&0&0&0&0&1\\
0&0&0&1&1&0\\
0&0&0&1&1&0\\
0&1&1&0&0&1\\
0&1&1&0&0&1\\
1&0&0&1&1&0
\end{bmatrix}\quad\text{has twins},
\\
A_{\scriptscriptstyle G_{15}^6}&=\begin{bmatrix}
0&0&0&0&0&1\\
0&0&0&1&1&0\\
0&0&0&1&1&0\\
0&1&1&0&1&1\\
0&1&1&1&0&1\\
1&0&0&1&1&0
\end{bmatrix}\quad\text{has twins},
&A_{\scriptscriptstyle G_{16}^6}&=\begin{bmatrix}
0&0&0&0&0&1\\
0&0&0&1&1&1\\
0&0&0&1&1&1\\
0&1&1&0&0&1\\
0&1&1&0&0&1\\
1&1&1&1&1&0
\end{bmatrix}\quad\text{has twins},
\\
A_{\scriptscriptstyle G_{17}^6}&=\begin{bmatrix}
0&0&0&0&0&1\\
0&0&0&1&1&1\\
0&0&0&1&1&1\\
0&1&1&0&1&0\\
0&1&1&1&0&0\\
1&1&1&0&0&0
\end{bmatrix}\quad\text{has twins},
&A_{\scriptscriptstyle G_{18}^6}&=\begin{bmatrix}
0&0&0&0&0&1\\
0&0&0&1&1&1\\
0&0&0&1&1&1\\
0&1&1&0&1&1\\
0&1&1&1&0&1\\
1&1&1&1&1&0
\end{bmatrix}\quad\text{has twins},
\\
A_{\scriptscriptstyle G_{19}^6}&=\begin{bmatrix}
0&0&0&0&0&1\\
0&0&1&1&1&0\\
0&1&0&1&1&0\\
0&1&1&0&1&1\\
0&1&1&1&0&1\\
1&0&0&1&1&0
\end{bmatrix},\quad\det=3,
&
A_{\scriptscriptstyle G_{20}^6}&=\begin{bmatrix}
0&0&0&0&0&1\\
0&0&1&1&1&1\\
0&1&0&1&1&1\\
0&1&1&0&1&1\\
0&1&1&1&0&1\\
1&1&1&1&1&0
\end{bmatrix},\quad\det=3,
\\
A_{\scriptscriptstyle G_{21}^6}&=\begin{bmatrix}
0&0&0&0&1&1\\
0&0&0&0&1&1\\
0&0&0&0&1&1\\
0&0&0&0&1&1\\
1&1&1&1&0&0\\
1&1&1&1&0&0
\end{bmatrix}\quad\text{has twins},
&A_{\scriptscriptstyle G_{22}^6}&=\begin{bmatrix}
0&0&0&0&1&1\\
0&0&0&0&1&1\\
0&0&0&0&1&1\\
0&0&0&0&1&1\\
1&1&1&1&0&1\\
1&1&1&1&1&0
\end{bmatrix}\quad\text{has twins},
\\
A_{\scriptscriptstyle G_{23}^6}&=\begin{bmatrix}
0&0&0&0&1&1\\
0&0&0&0&1&1\\
0&0&0&1&0&1\\
0&0&1&0&0&1\\
1&1&0&0&0&0\\
1&1&1&1&0&0
\end{bmatrix}\quad\text{has twins},
&A_{\scriptscriptstyle G_{24}^6}&=\begin{bmatrix}
0&0&0&0&1&1\\
0&0&0&0&1&1\\
0&0&0&1&0&1\\
0&0&1&0&0&1\\
1&1&0&0&0&1\\
1&1&1&1&1&0
\end{bmatrix}\quad\text{has twins},
\\
A_{\scriptscriptstyle G_{25}^6}&=\begin{bmatrix}
0&0&0&0&1&1\\
0&0&0&0&1&1\\
0&0&0&1&0&1\\
0&0&1&0&0&1\\
1&1&0&0&0&0\\
1&1&1&1&0&0
\end{bmatrix}\quad\text{has twins},
&A_{\scriptscriptstyle G_{26}^6}&=\begin{bmatrix}
0&0&0&0&1&1\\
0&0&0&0&1&1\\
0&0&0&1&0&1\\
0&0&1&0&1&0\\
1&1&0&1&0&1\\
1&1&1&0&1&0
\end{bmatrix}\quad\text{has twins},
\\
A_{\scriptscriptstyle G_{27}^6}&=\begin{bmatrix}
0&0&0&0&1&1\\
0&0&0&0&1&1\\
0&0&0&1&1&1\\
0&0&1&0&1&1\\
1&1&1&1&0&0\\
1&1&1&1&0&0
\end{bmatrix}\quad\text{has twins},
&A_{\scriptscriptstyle G_{28}^6}&=\begin{bmatrix}
0&0&0&0&1&1\\
0&0&0&0&1&1\\
0&0&0&1&1&1\\
0&0&1&0&1&1\\
1&1&1&1&0&1\\
1&1&1&1&1&0
\end{bmatrix}\quad\text{has twins},
\\
A_{\scriptscriptstyle G_{29}^6}&=\begin{bmatrix}
0&0&0&0&1&1\\
0&0&0&1&1&1\\
0&0&0&1&1&1\\
0&1&1&0&0&0\\
1&1&1&0&0&0\\
1&1&1&0&0&0
\end{bmatrix}\quad\text{has twins},
&A_{\scriptscriptstyle G_{30}^6}&=\begin{bmatrix}
0&0&0&0&1&1\\
0&0&0&1&1&1\\
0&0&0&1&1&1\\
0&1&1&0&0&0\\
1&1&1&0&0&1\\
1&1&1&0&1&0
\end{bmatrix}\quad\text{has twins},
\\
A_{\scriptscriptstyle G_{31}^6}&=\begin{bmatrix}
0&0&0&0&1&1\\
0&0&0&1&1&1\\
0&0&0&1&1&1\\
0&1&1&0&1&1\\
1&1&1&1&0&0\\
1&1&1&1&0&0
\end{bmatrix}\quad\text{has twins},
&A_{\scriptscriptstyle G_{32}^6}&=\begin{bmatrix}
0&0&0&0&1&1\\
0&0&0&1&1&1\\
0&0&0&1&1&1\\
0&1&1&0&1&1\\
1&1&1&1&0&1\\
1&1&1&1&1&0
\end{bmatrix}\quad\text{has twins},
\\
A_{\scriptscriptstyle G_{33}^6}&=\begin{bmatrix}
0&0&0&0&1&1\\
0&0&1&1&0&0\\
0&1&0&1&0&0\\
0&1&1&0&0&1\\
1&0&0&0&0&1\\
1&0&0&1&1&0
\end{bmatrix},\quad\det=3,
&A_{\scriptscriptstyle G_{34}^6}&=\begin{bmatrix}
0&0&0&0&1&1\\
0&0&1&1&0&0\\
0&1&0&1&0&0\\
0&1&1&0&1&1\\
1&0&0&1&0&1\\
1&0&0&1&1&0
\end{bmatrix},\quad\det=4,
\\
A_{\scriptscriptstyle G_{35}^6}&=\begin{bmatrix}
0&0&0&0&1&1\\
0&0&1&1&0&0\\
0&1&0&1&1&1\\
0&1&1&0&1&1\\
1&0&1&1&0&1\\
1&0&1&1&1&0
\end{bmatrix},\quad\det=4,
&A_{\scriptscriptstyle G_{36}^6}&=\begin{bmatrix}
0&0&0&0&1&1\\
0&0&1&1&0&1\\
0&1&0&1&0&1\\
0&1&1&0&0&1\\
1&0&0&0&0&1\\
1&1&1&1&1&0
\end{bmatrix},\quad\det=7,
\\
A_{\scriptscriptstyle G_{37}^6}&=\begin{bmatrix}
0&0&0&0&1&1\\
0&0&1&1&0&1\\
0&1&0&1&0&1\\
0&1&1&0&1&0\\
1&0&0&1&0&0\\
1&1&1&0&0&0
\end{bmatrix},\quad\det=-5,
&A_{\scriptscriptstyle G_{38}^6}&=\begin{bmatrix}
0&0&0&0&1&1\\
0&0&1&1&0&1\\
0&1&0&1&0&1\\
0&1&1&0&1&1\\
1&0&0&1&0&0\\
1&1&1&1&0&0
\end{bmatrix},\quad\det=0,
\\
A_{\scriptscriptstyle G_{39}^6}&=\begin{bmatrix}
0&0&0&0&1&1\\
0&0&1&1&1&1\\
0&1&0&1&1&1\\
0&1&1&0&1&1\\
1&1&1&1&0&0\\
1&1&1&1&0&0
\end{bmatrix}\quad\text{has twins},
&A_{\scriptscriptstyle G_{40}^6}&=\begin{bmatrix}
0&0&0&0&1&1\\
0&0&1&1&1&1\\
0&1&0&1&1&1\\
0&1&1&0&1&1\\
1&1&1&1&0&1\\
1&1&1&1&1&0
\end{bmatrix},\quad\det=4,
\\
A_{\scriptscriptstyle G_{41}^6}&=\begin{bmatrix}
0&0&0&1&1&1\\
0&0&0&1&1&1\\
0&0&0&1&1&1\\
1&1&1&0&0&0\\
1&1&1&0&0&0\\
1&1&1&0&0&0
\end{bmatrix}\quad\text{has twins},
&A_{\scriptscriptstyle G_{42}^6}&=\begin{bmatrix}
0&0&0&1&1&1\\
0&0&0&1&1&1\\
0&0&0&1&1&1\\
1&1&1&0&0&0\\
1&1&1&0&0&1\\
1&1&1&0&1&0
\end{bmatrix}\quad\text{has twins},
\\
A_{\scriptscriptstyle G_{43}^6}&=\begin{bmatrix}
0&0&0&1&1&1\\
0&0&0&1&1&1\\
0&0&0&1&1&1\\
1&1&1&0&0&1\\
1&1&1&0&0&1\\
1&1&1&1&1&0
\end{bmatrix}\quad\text{has twins},
&A_{\scriptscriptstyle G_{44}^6}&=\begin{bmatrix}
0&0&0&1&1&1\\
0&0&0&1&1&1\\
0&0&0&1&1&1\\
1&1&1&0&1&1\\
1&1&1&1&0&1\\
1&1&1&1&1&0
\end{bmatrix}\quad\text{has twins},
\\\
A_{\scriptscriptstyle G_{45}^6}&=\begin{bmatrix}
0&0&0&1&1&1\\
0&0&1&0&1&1\\
0&1&0&0&1&1\\
1&0&0&0&1&1\\
1&1&1&1&0&0\\
1&1&1&1&0&0
\end{bmatrix}\quad\text{has twins},
&A_{\scriptscriptstyle G_{46}^6}&=\begin{bmatrix}
0&0&0&1&1&1\\
0&0&1&0&1&1\\
0&1&0&0&1&1\\
1&0&0&0&1&1\\
1&1&1&1&0&1\\
1&1&1&1&1&0
\end{bmatrix},\quad\det=7,
\\
A_{\scriptscriptstyle G_{47}^6}&=\begin{bmatrix}
0&0&0&1&1&1\\
0&0&1&0&1&1\\
0&1&0&1&1&1\\
1&0&1&0&1&1\\
1&1&1&1&0&0\\
1&1&1&1&0&0
\end{bmatrix}\quad\text{has twins},
&A_{\scriptscriptstyle G_{48}^6}&=\begin{bmatrix}
0&0&0&1&1&1\\
0&0&1&0&1&1\\
0&1&0&1&1&1\\
1&0&1&0&1&1\\
1&1&1&1&0&1\\
1&1&1&1&1&0
\end{bmatrix},\quad\det=3,
\\
A_{\scriptscriptstyle G_{49}^6}&=\begin{bmatrix}
0&0&0&1&1&1\\
0&0&1&1&1&1\\
0&1&0&1&1&1\\
1&1&1&0&0&0\\
1&1&1&0&0&1\\
1&1&1&0&1&0
\end{bmatrix},\quad\det=-1,
&A_{\scriptscriptstyle G_{50}^6}&=\begin{bmatrix}
0&0&0&1&1&1\\
0&0&1&1&1&1\\
0&1&0&1&1&1\\
1&1&1&0&0&1\\
1&1&1&0&0&1\\
1&1&1&1&1&0
\end{bmatrix}\quad\text{has twins},
\\
A_{\scriptscriptstyle G_{51}^6}&=\begin{bmatrix}
0&0&0&1&1&1\\
0&0&1&1&1&1\\
0&1&0&1&1&1\\
1&1&1&0&1&1\\
1&1&1&1&0&1\\
1&1&1&1&1&0
\end{bmatrix},\quad\det=3,
&A_{\scriptscriptstyle G_{52}^6}&=\begin{bmatrix}
0&0&1&1&1&1\\
0&0&1&1&1&1\\
1&1&0&0&1&1\\
1&1&0&0&1&1\\
1&1&1&1&0&0\\
1&1&1&1&0&0
\end{bmatrix}\quad\text{has twins},
\\
A_{\scriptscriptstyle G_{53}^6}&=\begin{bmatrix}
0&0&1&1&1&1\\
0&0&1&1&1&1\\
1&1&0&0&1&1\\
1&1&0&0&1&1\\
1&1&1&1&0&1\\
1&1&1&1&1&0
\end{bmatrix}\quad\text{has twins},
&A_{\scriptscriptstyle G_{54}^6}&=\begin{bmatrix}
0&0&1&1&1&1\\
0&0&1&1&1&1\\
1&1&0&1&1&1\\
1&1&1&0&1&1\\
1&1&1&1&0&1\\
1&1&1&1&1&0
\end{bmatrix}\quad\text{has twins},
\end{align*}
and $G_{55}^6=K_5$ with $\det K_5=-5$. The above list shows that only $G_{38}^6$ possesses all the required properties.

\bibliography{qms}{}
\bibliographystyle{plain}

\end{document}